\theoremstyle{plain}% default
\newtheorem{theorem}{Theorem}[section]
\newtheorem*{thB}{Theorem B}
\newtheorem{corollary}{Corollary}[section]
\newtheorem{definition}{Definition}[section]
\newtheorem{example}{Example}[section]
\newtheorem{lemma}{Lemma}[section]
\newtheorem{proposition}{Proposition}[section]
\newtheorem{remark}{Remark}[section]
\newtheorem{maintheorem}{Theorem}
\begin{document}

\title[Uniqueness and stability of equilibrium states]{Uniqueness and stability of equilibrium states for random non-uniformly expanding maps.}

\author{R. Bilbao}
\address{Rafael A. Bilbao\\ Escuela de Matem\'atica y Estat\'istica, UPTC \\ Sede Central del Norte Av. Central del Norte 39 - 115, cod. \\ 150003 Tunja, Boyac\'a \\ Colombia} 
	\email{rafael.alvarez@uptc.edu.co}

\author{V. Ramos }
\address{Vanessa Ramos \\ Departamento de Matem\'atica, UFMA \\ Av. dos Portugueses, 1966\\  65080-805 S\~ao Lu\'{i}s, Maranh\~ao \\ Brazil}
\email{ramos.vanessa@ufma.br}

\subjclass[2010]{37D25, 37D35}
\keywords{Random Dynamical Systems; Stability; Thermodynamical Formalism.}

\pagenumbering{arabic}

\begin{abstract}
	We consider a robust class of random non-uniformly expanding local homeomorphisms and H\"older continuous potentials with small variation. For each element of this class we develop the Thermodynamical Formalism and prove the existence and uniqueness of equilibrium states among non-uniformly expanding measures. Moreover, we show that these equilibrium states and the random topological pressure vary continuously in this setting.
\end{abstract}

\maketitle
\section{Introduction}	
The Thermodynamical Formalism, developed by Sinai, Ruelle and Bowen in the seventies and eighties, is a part of Ergodic Theory that came into existence through the application of techniques and results from statistical mechanics in the realm of smooth dynamics. One of its main goals is to describe the statistical behavior of a dynamical system via invariant measures, called \emph{equilibrium states}, that maximize the free energy of the system. 

In the classical setting, an \emph{equilibrium state} associated to a continuous transformation $T:M\rightarrow M$ defined on a compact metric space $M$ and a continuous potential $\phi:M\rightarrow\mathbb{R}$ is an invariant probability measure $\mu_{T, \phi}$ characterized by the following variational principle:
\begin{equation*}\label{PriVar}
P_{T}(\phi)=h_{\mu_{T, \phi}}(T)+\int{\phi} \,d\mu_{T, \phi}=\displaystyle\sup_{\mu\in\mathcal{M}_{T}(M)}\left\{h_{\mu}(T)+\int{\phi}\, d\mu\right\}
\end{equation*}
where $P_{T}(\phi)$ is the topological pressure, $h_{\mu}(T)$ denotes the entropy and the supremum is taken over all invariant probability measures.

This theory was initiated by the pioneering work of Sinai~\cite{Sinai} where it was proved  the existence and uniqueness of equilibrium states for Anosov diffeomorphisms and H\"older continuous potentials. In subsequent works Bowen~\cite{Bowen71} and Ruelle~\cite{Ruelle78} extended the results of Sinai to uniformly hyperbolic systems and H\"older continuous potentials. Since then, important contributions for this theory in the deterministic case have been given by several authors \cite{Climen}, \cite{OliveiraViana}, \cite{Sarig}, \cite{VarandasViana} among many others.  

In the context of random dynamical systems, the study of equilibrium states is still quite far	from being well understood despite some advances in the area. In a few words, a random dynamical system is a skew-product $F(w,x)=(\theta(w),f_{w}(x))$ where the randomness is modeled by an invertibe transformation $\theta$ preserving an ergodic measure $\mathbb{P}.$ We are interested in understanding the dynamics of compositions
$$f^{n}_{w}:=f_{\theta^{n-1}(w)}\circ...\circ f_{\theta(w)}\circ f_{w}.$$
As in the deterministic case, the random topological pressure of the system is the supremum of the entropy plus the integration of the potential among all invariant probability measures whose marginal is $\mathbb{P}$. We refer the reader to \cite{Arnold} for a background and treatment of this topic.

Once it was established a variational principle for random maps, a natural ask is for what kinds of random dynamical systems and potentials we can develop the theory of equilibrium states. In \cite{Kifer3} Kifer proved existence and uniqueness of equilibrium states for random uniformly expanding maps associated to H\"older continuous potentials. In \cite{Pei1} Liu extended this result for uniformly hyperbolic random systems. Later, the thermodynamical formalism was developed by Kifer \cite{Kifer4} for random expanding in average transformations and by Mayer, Skorulski and Urbanski~\cite{MSU} for distance expanding random mappings.  In the contex of random countable Markov shifts, the thermodynamic formalism was proved by Denker, Kifer and Stadlbauer in~\cite{DKS}.	The existence of equilibrium states with positive Lyapunov exponents was proved by Arbieto, Matheus and Oliveira~\cite{Oliveira2} for certain non-uniformly expanding maps and continuous potentials with low variation. In \cite{Bilbao} Bilbao and Oliveira obtained uniqueness of maximizing entropy measures in this context. Recently, Stadlbauer, Suzuki and Varandas \cite{StadlbauerSuzukiVarandas} developed the thermodynamical formalism for a wide class of random maps with non-uniform expansion and diferentiable potentials at high
temperature.

In this work we develop the thermodynamical formalism for a robust class of random non-uniformly expanding local homeomorphisms associated to H\"older continuous potentials with small variation. First, we prove the existence of an invariant measure absolutely continuous	with respect to the leading eigenmeasures of the dual transfer operators. This invariant measure is indeed an equilibrium state for the random dynamical system and it is unique in the setting of non-uniformly expanding measures. Moreover, we show that the random topological pressure is the integral of the leading eigenvalues of the transfer operators. As an application of our techniques, we extend the results obtained in \cite{Oliveira2} and \cite {Bilbao} for H\"older continuous potentials with small variation. 

Finally, we study the persistence of the equilibrium state under small perturba-tions of the system. In the context of SRB measures, the continuous dependence with respect to the dynamics was obtained by Alves and Viana~\cite{AV} for maps with non-uniform expansion. Such continuity was also proved by Baladi~\cite{Baladi} and Young~\cite{Young} for random perturbations of uniformly hyperbolic systems and by Alves and Ara\'ujo in~\cite{AlvesAraujo} for random perturbations of non-uniformly expanding maps.
More generaly, the continuity of the equilibrium state was proved by Castro and Varandas~\cite{Varandas1} for a class of non-uniformly expanding maps and potentials with small variation. This property was also obtained by Alves, Ramos and Siqueira~\cite{AlvesRamosSiqueira} for non-uniformly hyperbolic systems and hyperbolic potentials.  Here, we deal with a family of random non-uniformly expanding maps and potentials with small variation. We prove that the non-uniformly expanding equilibrium state as well as the random topological pressure vary continuously within this family.

We organize this paper as follows. In Section~\ref{statements}, we present our setting and state the main results. Basic definitions as random topological pressure and projective metrics are introduced in Section~\ref{preliminares}. In Section~\ref{medreferencia}, we recall the definition of reference measure and prove some properties that will be useful throughout the work. In Section~\ref{optransf}, we use the projective metric approach to obtain the thermodynamical formalism. The existence and uniqueness of equilibrium states among non-uniformly expanding measures are proved in Section~\ref{ee}. In Section~\ref{eq.stability}, we show the continuous dependence of these equilibrium states and the topological pressure as functions of the random dynamics and the potential. In the last section we describe some applications of our results.

	\section{Setting and main results}\label{statements}
Let $M$	be a compact and connected manifold with distance $d$ and $\Omega$ the space of local homeomorphisms defined on $M.$ Consider a Lesbesgue space $(X, \mathcal{A},\mathbb{P})$ and an invertible transformation $\theta:X\rightarrow X$ preserving $\mathbb{P}$. We call \emph{random dynamical system} (RDS) any continuous transformation $f:X\to \Omega$ given by $w\mapsto f_w\in \Omega$ such that $(w, x)\mapsto f_w(x)$ is measurable. For every $n\geq 0$ we define 
\[
f^0_w:=Id\ \ , \ \ f^{n}_{w}:=f_{\theta^{n-1}(w)}\circ...\circ f_{\theta(w)}\circ f_{w}\ \ \mbox{and}\ \ f^{-n}_{w}=(f^{n}_{w})^{-1}.
\]
The skew-product generated by the maps $f_w$ is the measurable transformation 
$$F:X\times M \to X\times M \ \ ; \ \ F(w, x)=(\theta(w),f_{w}(x)).$$
In particular, $F^n(w, x)=(\theta^n(w), f^{n}_{w}(x) )$ for every $n\in \mathbb{Z}.$
 
 Let $\mathcal{M}_{\mathbb{P}}(X \times M)$ be the space of probability measures on $X \times M$ such that the marginal is $\mathbb{P}$. Denote by $\mathcal{M}_{\mathbb{P}}(F)\subset\mathcal{M}_{\mathbb{P}}(X \times M)$ the set of $F$-invariant measures. Notice that, by Rokhlin's disintegration theorem \cite{Rokhlin1}, for every $\mu\in \mathcal{M}_{\mathbb{P}}(F)$ there exists a system of sample measures $\{\mu_{w}\}_{w\in X}$ of $\mu$ such that  $$d\mu(w, x)=d\mu_w(x)\ d\mathbb{P}(w).$$
 We say that a $F$-invariant measure $\mu$ is \emph{ergodic} if $(F, \mu)$ is ergodic. In what follows we assume that the system $(\theta, \mathbb{P})$ is ergodic. 
 
\subsection*{Hypothesis about the generating maps} For each $w\in X$ let $f_w:M \rightarrow M$ be a local homeomorphism satisfying: there exists a continuous function $L_w :M\rightarrow\mathbb{R}$ such that for every $x \in M$ we can find a neighborhood $U_x$ where $f_w: U_x \rightarrow f_w(U_x)$ is invertible and $$d(f_w^{-1}(y), f_w^{-1}(z)) \leq L_{w}(x)d(y, z), \ \ \mbox{for all}\ \ y,z \in f_w(U_x).$$
Notice that, the number of preimages $\#f_w^{-1}(x)$ is constant for all $x \in M$. We set $\deg(f_w):=\#f_w^{-1}(x)$ the degree of $f_w$ and assume $\deg(F)= \sup_w \deg(f_w) < \infty$.	

We suppose that there exists an open region $\mathcal{A}_w\subset M$ and constants $\sigma_w>1$ and $L_w\geq 1$ close enough to $1$ such that  
\begin{enumerate}
	\label{cond0}
	\item [(I)] $L_{w}(x) \leq L_w$ for every $x \in \mathcal{A}_w$ and $L_{w}(x) < \sigma_w ^{-1}$ for every $x \in \mathcal{A}_{w}^{c}=M\setminus \mathcal{A}_w$.
	\item[(II)] There exists a finite covering $\mathcal{U}_w$ of $M$, by open domains of injectivity for $f_w$, such that $\mathcal{A}_w$ can be covered by $q_w < \deg(f_w)$.
	\item [(III)] For every $\varepsilon>0$ we can find some positive integer $\tilde{n}=\tilde{n}(w,\varepsilon)$ satisfying $f_{\theta^j(w)}^{\tilde{n}}(B_{\theta^j(w)}(f^{j}_{w}(x),\varepsilon))=M$ for any $j\geq 0.$
	\end{enumerate}

 The conditions (I) and (II) mean that it is possible the existence of expanding and contracting behavior in $M$ but it is required for every point at least one preimage in the expanding region. The condition (III) means that the skew-product $F$ is \emph{topologically exact}.

Next we present the setting of potentials that will be considered. For $\alpha >0$, consider $C^\alpha(M)$ the space of H\"older continuous function $\varphi:M\to \mathbb{R}$ endowed with the seminorm
$$|\varphi|_\alpha=\sup_{x\neq y}\frac{|\varphi(x)-\varphi(y)|}{d(x,y)^\alpha}
$$
and the norm
$$\|\varphi\|_\alpha=\|\varphi\|_\infty+|\varphi|_\alpha,$$
where $\|\cdot\|_\infty$ stands for the $\sup$ norm. Denote by $\mathbb{L}^{1}_{\mathbb{P}}(X,C^\alpha(M))$ the space of all measurable functions  $\phi:X\times M\rightarrow \mathbb{R}$ such that  for all $w\in X$, the fiber potential $\phi_{w}:M\rightarrow \mathbb{R}$ defined by $\phi_{w}(x):=\phi(w,x)$ is H\"older continuous and $\lVert \phi \rVert_{1}=\int_{X}\lVert \phi_{w}\rVert_{\infty}\,d\mathbb{P}(w)<+\infty$. 
For $\phi\in \mathbb{L}^{1}_{\mathbb{P}}(X,C^\alpha(M))$ we assume the existence of some positive $\varepsilon_\phi>0$ satisfying for all $w\in X$ the following
\begin{equation*}\tag{IV}
\label{cond1}
\sup \phi_w - \inf \phi_w + \varepsilon_\phi< \log\deg f_w- \log q_w \quad \mbox {and}\quad %\sup_{w\in X}(\sup \phi_w - \inf \phi_w) <\varepsilon_\phi 
 \left|e^{\phi_w} \right|_\alpha < \varepsilon_\phi e^{\inf\phi_w}.
\end{equation*}
Notice that all potentials $\phi\in \mathbb{L}^{1}_{\mathbb{P}}(X,C^\alpha(M))$ in a neighbourhood of zero satisfies the condition (\ref{cond1}). In the literature this class of potential is called \emph{small variation}.

Let $p_w:=\deg f_w-q_w.$ The choice of $\varepsilon_\phi$ and $L_w$ must satisfies for each $w\in X$ 
\begin{equation*}\label{condcone}\tag{V}\gamma_w:=e^{\varepsilon_\phi}\!\!\left[\frac{ p_w\sigma^{-\alpha}+ q_w L_w^{\alpha} (1+ (L_w-1)^\alpha )}{\deg(f_w)}\right]\! + \varepsilon_\phi L_w^\alpha  \left[ 1+ m(\mbox{\mbox{diam}}M)^{\alpha}\right]\leq\gamma<1
\end{equation*}

\subsection*{Statement of results} Consider $C^0(M)$ the space of real continuous functions $\psi:M\rightarrow \mathbb{R}$ endowed with the uniform convergence norm. Given $w\in X$ let $f_w:M\rightarrow M$ be the dynamics and $\phi_w:M\rightarrow \mathbb{R}$ be the potential on the fiber. The \emph{Ruelle-Perron-Fr\"obenius operator} or simply \emph{transfer operator} associated to $(f_w, \phi_w)$ is the linear operator $\mathcal{L}_{w}:C^0(M)\rightarrow C^0(M)$ defined by
$$\mathcal{L}_{w}(\psi)(x)=\sum_{y\in f_w^{-1}(x)}e^{\phi_w(y)}\psi(y).$$
Its dual operator $\mathcal{L}_{w}^{\ast}:[C^0(M)]^\ast\to[C^0(M)]^\ast $ acts on the space of Borelean measures as follows
$$\int \psi\, d\mathcal{L}_{w}^{\ast}(\rho_{\theta(w)})=\int\mathcal{L}_{w}(\psi)\, d\rho_{\theta(w)}.$$
	
In our first result we describe the Thermodynamical Formalism for random non-uniformly expanding maps.	

\begin{maintheorem} \label{formalismo}
	Consider $F:X\times M\to X\times M$ a random dynamical system satisfying conditions \mbox{(I), (II)} and \mbox{(III)}. For any potential $\phi:X\times M\to \mathbb{R}$ satisfying~(\ref{cond1})  the following holds:\\
		(1) There exists a unique measurable family of probabilities $\{\nu_{w}\}_{w\in X}$ such that $$\mathcal{L}_{w}^{\ast}\nu_{\theta(w)}=\lambda_{w}\nu_{w}\,\, \mbox{where}\,\, \lambda_{w}=\nu_{\theta(w)}(\mathcal{L}_{w}(1)),\,\,\mbox{almost every $w\in X$}.$$
				(2) There exist a unique measurable family of H\"older continuous function $\{h_{w}\}_{w\in X}$ bounded away from zero and infinity such that 
	$$ \mathcal{L}_{w}h_{w}=\lambda_{w} h_{\theta(w)}\,\, \mbox{and}\,\,\nu_w(h_w)=1
	\,\,\,\mbox{for almost every $w\in X$}.$$
	  	 	 (3) The probability measure $\mu:=\{\mu_w\}_{w\in X}$ where $\mu_w:=h_{w}\nu_w$ is $F$-invariant.
	
	\end{maintheorem}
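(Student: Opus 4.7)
The strategy is to follow the projective-metric/Birkhoff-cone scheme for non-uniformly expanding systems (in the spirit of Arbieto--Matheus--Oliveira and Castro--Varandas), implemented fiber-by-fiber. Condition~(V), with its explicit $w$-uniform contraction constant $\gamma<1$, is tailored precisely to produce a cone of positive H\"older functions which is strictly mapped into itself by every fiber transfer operator $\mathcal{L}_w$; this uniform cone-invariance, together with Birkhoff's theorem on the contraction of the Hilbert projective metric, will drive the whole construction, and measurability in $w$ will come either from explicit limit formulas or from standard selection arguments.

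For item~(1), I would introduce the cone
$$\Lambda_{\kappa}=\bigl\{\varphi\in C^{\alpha}(M):\varphi>0,\ |\varphi|_{\alpha}\leq \kappa\inf\varphi\bigr\}$$
for an appropriate $\kappa$ depending on the H\"older constant of $\phi$. Splitting $\mathcal{L}_w\varphi$ into the contributions of the $p_w$ expanding preimages inside $\mathcal{A}_w$ and the (at most) $q_w$ possibly contracting preimages inside $\mathcal{A}_w^{c}$, and using (IV)--(V), I would verify that $\mathcal{L}_{w}(\Lambda_{\kappa})\subset\Lambda_{\gamma\kappa}\subset\Lambda_\kappa$ for every $w$. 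A Schauder--Tychonoff fixed-point argument applied to the continuous normalized dual map $\rho\mapsto \mathcal{L}_{w}^{\ast}\rho / \rho(\mathcal{L}_{w}\mathbf{1})$ on the weak-$\ast$ compact convex space $\mathcal{P}(M)$ then produces, for each $w$, a probability $\nu_w$ and a constant $\lambda_w=\nu_{\theta(w)}(\mathcal{L}_w\mathbf{1})$ with $\mathcal{L}_w^{\ast}\nu_{\theta(w)}=\lambda_w\nu_w$; fiberwise uniqueness follows from the projective contraction on the dual cone, and a measurable selection theorem upgrades the pointwise construction to a measurable family $w\mapsto\nu_w$.

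For item~(2), with $\{\nu_w\}$ in hand, I would set
$$h_{w,n}:=\frac{\mathcal{L}_{\theta^{-n}(w)}^{\,n}(\mathbf{1})}{\lambda_{w}^{(n)}},\qquad \lambda_{w}^{(n)}=\prod_{k=1}^{n}\lambda_{\theta^{-k}(w)}.$$
By the cone invariance the sequence $(h_{w,n})$ eventually lies in a nested family of cones with parameters shrinking like $\gamma^{n}\kappa$, so Birkhoff's theorem yields a Cauchy property in the Hilbert metric at rate $\gamma^{n}$. The standard Lipschitz comparison between the Hilbert metric and the $C^{0}$-norm on the cone then gives exponential convergence of $h_{w,n}$ in $C^0(M)$ to a H\"older function $h_w$, bounded away from $0$ and $\infty$, satisfying $\mathcal{L}_w h_w=\lambda_w h_{\theta(w)}$ by passing to the limit; the normalization $\nu_w(h_w)=1$ is imposed a posteriori, and measurability of $w\mapsto h_w$ is transparent from the explicit limit. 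For item~(3), the duality identity $\mathcal{L}_w((\psi\circ f_w)h_w)=\psi\,\mathcal{L}_w h_w=\lambda_w\,\psi\, h_{\theta(w)}$, paired with the eigenmeasure equation, yields $\int(\psi\circ f_w)\,d\mu_w=\int\psi\,d\mu_{\theta(w)}$ for every $\psi\in C^0(M)$, i.e.\ $(f_w)_{\ast}\mu_w=\mu_{\theta(w)}$, which combined with $\theta_{\ast}\mathbb{P}=\mathbb{P}$ gives $F_{\ast}\mu=\mu$.

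The principal technical obstacle is the cone-invariance estimate $\mathcal{L}_w(\Lambda_\kappa)\subset\Lambda_{\gamma\kappa}$: this is where non-uniform expansion enters in earnest and the full strength of condition~(V) is used, as one has to balance the $p_w\sigma^{-\alpha}$ contribution of the expanding branches against the $q_w L_w^{\alpha}(1+(L_w-1)^\alpha)$ contribution of the contracting ones, and then absorb the oscillation of $\phi$ via the additive term $\varepsilon_\phi L_w^\alpha(1+m(\mathrm{diam}\, M)^\alpha)$, so that the resulting factor is bounded by a single $w$-independent $\gamma<1$. A secondary but more delicate point is the measurability of $\{\nu_w\}$ and $\{h_w\}$ in $w$: the explicit limit formula handles the density family directly, while for the eigenmeasures one invokes a standard measurable selection argument applied to the fiberwise set of fixed points.
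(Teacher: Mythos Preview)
Your strategy coincides with the paper's: the cone of positive H\"older observables, the invariance $\mathcal{L}_w(\Lambda_\kappa)\subset\Lambda_{\gamma\kappa}$ driven by condition~(V), the construction of $h_w$ as the $C^0$-limit of $\hat{\mathcal{L}}^n_{\theta^{-n}(w)}(\mathbf{1})$ via Birkhoff contraction, and the duality computation for $F$-invariance are exactly what the paper does (Propositions~\ref{invariancia}--\ref{h} and the end of \S\ref{mensu}). Two minor slips: your regions are swapped (in the paper $\mathcal{A}_w$ is the \emph{possibly contracting} region covered by $q_w$ domains, and the $p_w$ expanding preimages lie in $\mathcal{A}_w^c$); and the paper works with the \emph{local} H\"older seminorm $|\varphi|_{\alpha,\delta}$ in balls of radius $\delta$ rather than the global $|\varphi|_\alpha$, which is why the constant $m$ from Lemma~\ref{bolas} enters condition~(V).

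The one substantive divergence concerns uniqueness and measurability of $\{\nu_w\}$. You propose uniqueness via projective contraction on a dual cone and measurability via an abstract selection theorem. The paper instead reverses the logical order: it first obtains \emph{existence} of some equivariant family $\{\nu_w\}$ by Schauder--Tychonoff (citing \cite{Urbanski2} for the random version---note that your single-fiber map $\rho\mapsto\mathcal{L}_w^{\ast}\rho/\rho(\mathcal{L}_w\mathbf{1})$ would only yield $\mathcal{L}_w^{\ast}\nu_w=\lambda\nu_w$, not the equivariant relation $\mathcal{L}_w^{\ast}\nu_{\theta(w)}=\lambda_w\nu_w$, so this step needs more care than you indicate), then builds $h_w$ and the exponential convergence~(\ref{deccontinua}), and only \emph{afterwards} deduces both uniqueness and measurability of $\nu_w$ in Lemma~\ref{unicidade familia} from the explicit formula
\[
\nu_w(g)=\lim_{n\to\infty}\frac{\mathcal{L}_w^n g(x_n)}{\mathcal{L}_w^n\mathbf{1}(x_n)}=\lim_{n\to\infty}\frac{\|\mathcal{L}_w^n g\|_\infty}{\|\mathcal{L}_w^n\mathbf{1}\|_\infty},
\]
valid for an arbitrary sequence $(x_n)$. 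This gives measurability for free from the measurability of $w\mapsto\mathcal{L}_w$, avoiding any selection argument. Your route is viable but less self-contained; the paper's buys an explicit limit characterization of $\nu_w$ that is later reused.
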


We also derive that the family $\{\mu_w\}_{w\in X}$ has an exponential decay of correlations for H\"older continuous observables. 

\begin{maintheorem} \label{decaimento}
	There exists $0<\tau< 1$ such that for any $\varphi\in L^1(\mu_{\theta^{n}(w)})$ and $\psi \in C^{\alpha}(M) $ there exists a positive constant $K(\varphi, \psi) $ satisfying:
	$$\left|\int \left(\varphi \circ f^{n}_{w}\right) \psi \ d\mu_{w} - \int \varphi \ d\mu_{\theta^{n}(w)} \int \psi\ d\mu_{w}  \right| \leq K(\varphi, \psi) \tau^n$$
	for all $n\geq 1$.  
\end{maintheorem}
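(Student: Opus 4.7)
The strategy is the standard one of reducing decay of correlations to exponential convergence of the normalized transfer operator, and then exploiting the projective-metric contraction established in Section~\ref{optransf}. Introduce the normalized operator
\[
\widetilde{\mathcal{L}}_w(g):=\frac{1}{\lambda_w h_{\theta(w)}}\mathcal{L}_w(h_w\, g),
\]
which by Theorem~\ref{formalismo} satisfies $\widetilde{\mathcal{L}}_w(1)=1$ and, dually, $\widetilde{\mathcal{L}}_w^{\ast}\mu_{\theta(w)}=\mu_w$. A direct computation using the eigenmeasure relation $\mathcal{L}_w^\ast \nu_{\theta(w)}=\lambda_w\nu_w$ together with $\mathcal{L}^n_w(h_w\psi)=\lambda_w^{(n)}h_{\theta^n(w)}\widetilde{\mathcal{L}}^n_w\psi$, where $\lambda_w^{(n)}:=\prod_{j=0}^{n-1}\lambda_{\theta^j(w)}$, gives the key identity
\[
\int (\varphi\circ f^n_w)\,\psi\,d\mu_w \;=\; \int \varphi\cdot \widetilde{\mathcal{L}}^n_w(\psi)\,d\mu_{\theta^n(w)}.
\]
Since $\widetilde{\mathcal{L}}_w^{\ast}\mu_{\theta(w)}=\mu_w$ also yields $\int \widetilde{\mathcal{L}}^n_w\psi\,d\mu_{\theta^n(w)}=\int\psi\,d\mu_w$, the correlation to estimate becomes
\[
\Big|\int \varphi\cdot\bigl(\widetilde{\mathcal{L}}^n_w\psi - \textstyle\int\psi\,d\mu_w\bigr)\,d\mu_{\theta^n(w)}\Big|\;\leq\;\|\varphi\|_{L^1(\mu_{\theta^n(w)})}\cdot \bigl\|\widetilde{\mathcal{L}}^n_w\psi - \textstyle\int\psi\,d\mu_w\bigr\|_\infty.
\]

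The remaining task is therefore to prove that $\widetilde{\mathcal{L}}^n_w\psi$ converges uniformly and exponentially fast to the constant $\int\psi\,d\mu_w$, with a rate independent of $w$ and a multiplicative constant controlled by $\|\psi\|_\alpha$. This will come from the cone-contraction machinery of Section~\ref{optransf}. First I would choose a constant $C=C(\psi)$ large enough that both $\psi+C$ and $(\|\psi\|_\infty+C)-\psi$ lie in the invariant H\"older cone $\Lambda$ on which, by hypothesis~(\ref{condcone}), the operators $\widetilde{\mathcal{L}}_w$ act as uniform contractions of projective (Hilbert) diameter with some ratio $\tau\in(0,1)$ depending only on the constant $\gamma<1$. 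Writing $\psi$ as the difference of these two cone elements and iterating, the projective contraction translates into an estimate of the form
\[
\bigl\|\widetilde{\mathcal{L}}^n_w\psi - \textstyle\int\psi\,d\mu_w\bigr\|_\infty \;\leq\; K_0\,\tau^n\,\|\psi\|_\alpha,
\]
for some constant $K_0$ independent of $w$ and $n$, the identification of the limit being forced by the invariance $\widetilde{\mathcal{L}}_w^\ast\mu_{\theta(w)}=\mu_w$.

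Combining the two displays above and setting $K(\varphi,\psi):=K_0\,\|\varphi\|_{L^1(\mu_{\theta^n(w)})}\|\psi\|_\alpha$ yields the stated exponential decay with the same $\tau$ for all $w$.

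The main obstacle is the passage from projective to sup-norm distance with uniform constants in the random setting. In the deterministic case this is routine once the cone has finite projective diameter; here one must check that the estimates from Section~\ref{optransf} provide a $w$-uniform bound on the Hilbert diameter $\Theta_w$ of $\widetilde{\mathcal{L}}_w(\Lambda)$ and on the ratio between the projective metric and $\|\cdot\|_\infty$. Uniformity is granted by condition~(\ref{condcone}), which produces the \emph{fixed} contraction factor $\gamma$ regardless of $w$; this is precisely the point where the assumption that $L_w$ and $\varepsilon_\phi$ are close to $1$ and to $0$, respectively, is essential. Once this uniform Birkhoff contraction is in hand, the rate $\tau$ can be taken as $\tanh(\Delta/4)$ for any uniform upper bound $\Delta$ on the diameters, completing the proof.
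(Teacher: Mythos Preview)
Your approach is correct and is essentially the same as the paper's: reduce the correlation to a sup-norm estimate via the transfer-operator duality, then use the Birkhoff cone contraction (uniform in $w$ thanks to condition~(\ref{condcone})) together with a positive/negative cone decomposition of the observable. The only cosmetic difference is that the paper works with the unnormalized-density operator $\hat{\mathcal{L}}_w=\lambda_w^{-1}\mathcal{L}_w$ applied to $\psi\,h_w$ and compares it to $\hat{\mathcal{L}}^n_w(h_w)=h_{\theta^n(w)}$ directly in the original cone $\mathcal{C}^k_\delta$, whereas you conjugate by $h_w$ and work with $\widetilde{\mathcal{L}}_w$; since $h_w$ is uniformly bounded above and below with uniformly controlled H\"older norm, the two formulations are equivalent, though staying in $\mathcal{C}^k_\delta$ as the paper does spares you from checking that the conjugated cone is $w$-uniformly comparable to a fixed H\"older cone.
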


The weak hyperbolicity property of the generating maps allows us to prove that the $F$-invariant measure given by Theorem~\ref{formalismo} is indeed an equilibrium state for the random dynamical system. Moreover, it is unique if we consider only the measures whose pressure is located on the expanding region. We precise the setting as follows.

Suppose that there exists $c>0$ such that for $\mathbb{P}$-almost every $w\in X$ we can find $\tilde{L}_w$ close enough to $1$ and $\tilde{\sigma}_w>1$ satisfying for every $j\geq 0$ that
\begin{equation}\tag{VI}
\label{cond2}
L_{\theta^j(w)}\leq \tilde{L}_w\quad,\quad \tilde{\sigma}_w\leq\sigma_{\theta^j(w)} \quad \mbox{and}\quad \tilde{L}_w^{\rho}\tilde{\sigma}_w^{-(1-\rho)} < e^{-2c} <1,
\end{equation}
where $\rho$ is given by Lemma~\ref{lemma1}. For the potential, we assume $\int|\phi_{w}|_{\alpha} \ d\mathbb{P}(w)< +\infty$. 

We say that a subset $H$ of $X\times M$ is \emph{non-uniformly expanding} if there exists some positive constant $c>0$ such that
\begin{equation}\tag{$\star$}\label{H}
H:=\left\{(w, x)\in X\times M\, ;\, \limsup_{n\to+\infty} \frac{1}{n}\sum^{n-1}_{j=0}\log L_{\theta^{j}(w)}(f^{j}_{w}(x))^{-1}\leqslant -2c<0\right\}.
\end{equation}
A probability measure $\eta$, not necessarily invariant, is called \emph{non-uniformly expanding} with exponent $c$ if $\eta(H)=1$. 

Our next result establishes uniqueness of equilibrium states, among non-uniformly expanding measures, for radom dynamical systems $F|_\theta$ and potentials $\phi$ satisfying conditions (I)-(VI).
		
\begin{maintheorem}\label{esteq}
	There exists only one $F$-invariant non-uniformly expanding measure $\mu_{F, \phi}\in \mathcal{M}_{\mathbb{P}}(F)$ maximizing the variational principle 
	$$P_{F|_{\theta}}(\phi)=\int\log\lambda_w\ d\mathbb{P}(w)=h_{\mu_{F, \phi}}(F|\theta)+\int\!\phi\ d\mu_{F, \phi}=\sup\left\{h_{\mu}(F|\theta)+\int\phi\ d\mu\right\}$$  
	where the supremum is taken in set $\mathcal{M}_{\mathbb{P}}(F).$ Thus, $\mu_{F, \phi}$ is the unique non-uniformly expanding equilibrium state of $(F, \phi).$ 
\end{maintheorem}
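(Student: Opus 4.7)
My plan is to prove the theorem in three steps: (i) compute $h_{\mu_{F,\phi}}(F|\theta)+\int\phi\,d\mu_{F,\phi}$ exactly by a fibered Rokhlin formula, (ii) establish the reverse variational inequality for every $\eta\in\mathcal{M}_{\mathbb{P}}(F)$ and the non-uniform expansion of $\mu_{F,\phi}$, and (iii) rule out any other non-uniformly expanding equilibrium state via a random hyperbolic-times Gibbs comparison. For (i), from $\mathcal{L}_w h_w=\lambda_w h_{\theta(w)}$ and $\mathcal{L}_w^{\ast}\nu_{\theta(w)}=\lambda_w\nu_w$ in Theorem~\ref{formalismo} one deduces the fibered Jacobian
$$
J_{\mu_w}f_w(x)=\lambda_w\,e^{-\phi_w(x)}\,\frac{h_{\theta(w)}(f_w(x))}{h_w(x)}
$$
on each element of the injectivity cover $\mathcal{U}_w$. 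Since $\deg(F)<\infty$, this cover generates a finite random partition and Rokhlin's entropy formula applies fiberwise; $F$-invariance of $\mu_{F,\phi}$ makes the $\log h$ contributions telescope to zero, yielding
$$
h_{\mu_{F,\phi}}(F|\theta)+\int\phi\,d\mu_{F,\phi}=\int\log\lambda_w\,d\mathbb{P}(w).
$$

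For (ii), the upper bound is a Ledrappier--Walters type argument carried out fiber by fiber: on each $U\in\mathcal{U}_w$ the conformality $\mathcal{L}_w^{\ast}\nu_{\theta(w)}=\lambda_w\nu_w$ rewrites the integrated potential as an integral against $\nu$, and Jensen's inequality on the disintegrated conditional entropies, combined with $\int|\phi_w|_\alpha\,d\mathbb{P}<\infty$, gives $h_\eta(F|\theta)+\int\phi\,d\eta\leq\int\log\lambda_w\,d\mathbb{P}$ for every $\eta\in\mathcal{M}_{\mathbb{P}}(F)$. Hence $P_{F|\theta}(\phi)=\int\log\lambda_w\,d\mathbb{P}$ and $\mu_{F,\phi}$ realises this supremum. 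Ergodicity of $(F,\mu_{F,\phi})$, which follows from the spectral gap underlying Theorem~\ref{decaimento}, together with Birkhoff's theorem reduces $\mu_{F,\phi}(H)=1$ to an integral inequality for the log-Lipschitz function $\log L_w$; splitting the domain into $\mathcal{A}_w$ and $\mathcal{A}_w^c$ and invoking hypothesis~(VI) with the $\mu_w$-mass bound on $\mathcal{A}_w$ furnished by Lemma~\ref{lemma1} delivers $\mu_{F,\phi}(H)=1$.

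For (iii), let $\eta\in\mathcal{M}_{\mathbb{P}}(F)$ be another non-uniformly expanding equilibrium state. A random Pliss lemma driven by hypothesis~(VI), applied to $\log L_{\theta^j(w)}(f_w^j(x))^{-1}$, yields at $\eta$-almost every $(w,x)$ a positive lower density of hyperbolic times $n_k\to\infty$ at which $f^{n_k}_w$ admits a distortion-controlled inverse branch $\varphi_k$ on a ball of uniform radius $\delta$ around $f^{n_k}_w(x)$. The Jacobian identity of step~(i) then produces a two-sided Gibbs comparison
$$
C^{-1}\leq\frac{\eta_w(\varphi_k(B(f^{n_k}_w(x),\delta)))}{\exp\!\bigl(S_{n_k}\phi_w(x)-n_k\!\int\!\log\lambda_u\,d\mathbb{P}(u)\bigr)\,\nu_w(\varphi_k(B(f^{n_k}_w(x),\delta)))}\leq C
$$
with $C>0$ independent of $k$, and a standard Besicovitch covering along hyperbolic times forces $\eta_w\ll\nu_w$ for $\mathbb{P}$-almost every $w$. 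The Radon--Nikodym derivative is a fixed function of $\lambda_w^{-1}\mathcal{L}_w$, and the uniqueness assertion of Theorem~\ref{formalismo}(2) identifies it with $h_w$ after normalisation, so $\eta=\mu_{F,\phi}$.

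The main obstacle is step~(iii). The Pliss density, the hyperbolic radius $\delta$ and the distortion constant must all be chosen in a jointly $\mathbb{P}$-measurable way and uniformly on a set of full $\eta$-measure, so that the Gibbs comparison is genuinely uniform in the fiber variable $w$; otherwise the covering argument degenerates into almost-sure estimates along separate thresholds, insufficient to propagate back to an equality of fibered measures. Securing this uniform measurable control in the random skew-product setting---while still extracting the absolute continuity $\eta_w\ll\nu_w$ from the integrated Gibbs bound---is where the bulk of the work lies.
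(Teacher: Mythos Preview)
Your step~(i) matches the paper exactly (equation~(\ref{jacobiano}) and the Rokhlin computation at the end of \S\ref{ee}.1).

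For step~(ii) the paper takes a different route. It does \emph{not} bound $h_\eta+\int\phi\,d\eta$ measure by measure; instead Proposition~\ref{pressao} bounds the \emph{topological} pressure directly: cover $M$ by hyperbolic-time dynamical balls (possible since $H_w$ is dense by Proposition~\ref{expanding}), apply the Gibbs estimate of Proposition~\ref{conforme} for $\nu_w$, and use Besicovitch to control multiplicities. Your Ledrappier--Walters/Jensen route would need $h_\eta(F|\theta)\le\int\log J_{\eta_w}f_w\,d\eta$ for \emph{every} $\eta\in\mathcal{M}_{\mathbb{P}}(F)$, but the random Rokhlin formula (Theorem~\ref{formulaRokhlin}) is stated only under a generating-partition hypothesis, which Lemma~\ref{gp} supplies precisely for non-uniformly expanding measures---so the sup over all of $\mathcal{M}_{\mathbb{P}}(F)$ is not controlled. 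The paper also gets $\mu_{F,\phi}(H)=1$ for free from $\mu_{F,\phi}\ll\nu$ and Proposition~\ref{expanding}, rather than via a Birkhoff estimate on $\log L_w$.

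The main divergence is step~(iii), and here your proposal has a genuine gap. The paper's uniqueness argument (Proposition~\ref{propestadodeequilibrio}) is short and algebraic, and never needs a Gibbs bound for $\eta$: since $\eta$ is non-uniformly expanding it has a generating partition (Lemma~\ref{gp}), so Rokhlin's formula gives $h_\eta=\int\log J_{\eta_w}f_w\,d\eta$; the equilibrium identity then forces $\int\log(J_{\eta_w}f_w/J_{\mu_w}f_w)\,d\eta=0$, while Jensen's inequality (Remark~\ref{Jensen}) applied on each fiber $f_w^{-1}(y)$ gives $\le 0$ because $\sum_{z\in f_w^{-1}(y)} J_{\mu_w}f_w(z)^{-1}=1$. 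The equality case of Jensen yields $J_{\eta_w}f_w=J_{\mu_w}f_w$ $\eta$-a.e., whence a two-line duality computation shows $h_w^{-1}\eta_w$ satisfies $\mathcal{L}_w^\ast(h_{\theta(w)}^{-1}\eta_{\theta(w)})=\lambda_w\,h_w^{-1}\eta_w$, and the uniqueness in Theorem~\ref{formalismo}(1) gives $\eta_w\ll\nu_w$. In your scheme, by contrast, the displayed two-sided Gibbs comparison requires a lower bound on $\eta_w(\varphi_k(B))$; the ``Jacobian identity of step~(i)'' is for $\mu_{F,\phi}$, not for $\eta$, so it does not produce the $\eta_w$ side of that inequality. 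Obtaining such a bound for an \emph{arbitrary} non-uniformly expanding equilibrium state would require a separate entropy-distribution argument along hyperbolic times, and the measurability issues you flag at the end are symptoms of this missing ingredient rather than the core difficulty.
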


Once we have proved uniqueness of equilibrium states, we are going to investigate its persistence under small perturbations of the random system and the potential. 

As defined above consider $\mathbb{L}^{1}_{\mathbb{P}}(X,C^\alpha(M))$ the space of integrable potentials and let $\mathcal{D}\subset\Omega$ be the space of $C^1$ local diffeomorphisms defined on $M$. We shall consider on $\mathcal{D}\times \mathbb{L}^{1}_{\mathbb{P}}(X,C^\alpha(M))$ the product topology.
We fix an invertible transformation $\theta: X\to X$ preserving an ergodic measure $\mathbb{P}$ and consider $\mathcal{S}$ the family of skew-products generated by maps of $\mathcal{D}$
$$F:X\times M \to X\times M \ \ ; \ \ F(w, x)=(\theta(w),f_{w}(x))$$
where $(w, x)\mapsto f_w\in \mathcal{D}$ is measurable. Now we define the family 
$$\mathcal{H}=\left\{(F, \phi)\in \mathcal{S}\times \mathbb{L}^{1}_{\mathbb{P}}(X,C^\alpha(M)) \, ;\, (F, \phi)\, \, \mbox{satisfying conditions (I)-(VI)}\right\}.$$
By Theorem~\ref{esteq}, each $(F, \phi)\in \mathcal{H}$ has only one non-uniformly expanding equilibrium state. Our last main result states the continuity in the weak star topology of such equilibria within this family, this property is called \emph{equilibrium stability}. We also prove the continuity of the random topological pressure in this setting.
\begin{maintheorem} \label{estabilidade}
	The non-uniformly expanding equilibrium state and the topological pressure vary continuously on $\mathcal{H}$.
\end{maintheorem}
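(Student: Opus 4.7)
The plan is to deduce both continuity statements from continuity of the fibre-wise spectral data produced by the projective metric machinery of Section~\ref{optransf}. Concretely, for $(F,\phi)\in\mathcal H$ one has a measurable family $\{\lambda_w,h_w,\nu_w\}$ with $\mathcal L_w h_w=\lambda_w h_{\theta w}$ and $\mathcal L_w^{\ast}\nu_{\theta w}=\lambda_w\nu_w$, and the equilibrium state disintegrates as $d\mu_{F,\phi}(w,x)=h_w(x)\,d\nu_w(x)\,d\mathbb P(w)$, with pressure $P_{F|\theta}(\phi)=\int\log\lambda_w\,d\mathbb P(w)$. So it suffices to prove that for a sequence $(F_k,\phi_k)\to(F,\phi)$ in $\mathcal H$, the triples $(\lambda_{w,k},h_{w,k},\nu_{w,k})$ converge to $(\lambda_w,h_w,\nu_w)$ for $\mathbb P$-almost every $w$, together with uniform (integrable) bounds that let us pass to $\mathbb P$-integrals.

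First I would establish uniform control. Conditions (I)--(VI) involve only the quantities $L_w,\sigma_w,q_w,\deg f_w$ and the $C^\alpha$-size of $\phi_w$, all of which are stable under small perturbations: on a fixed set of full $\mathbb P$-measure one can, after passing to a subsequence, bound $L_{w,k},\sigma_{w,k},q_{w,k},\deg f_{w,k}$ and $\|\phi_{w,k}\|_\alpha$ uniformly in $k$ by the data of $(F,\phi)$. This yields a uniform cone contraction constant $\gamma<1$ in condition (V), and hence, via the projective metric argument of Section~\ref{optransf}, uniform $w,k$-independent bounds on $\|h_{w,k}\|_\alpha$, on $\sup h_{w,k}$ and $\inf h_{w,k}$, and on $|\log\lambda_{w,k}|$. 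These uniform bounds provide the dominating functions needed later for $\mathbb P$-integration.

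Next I would prove fibre-wise continuity. For $\mathbb P$-almost every $w$, the hypotheses of convergence in $\mathcal D\times \mathbb L^1_{\mathbb P}(X,C^\alpha(M))$ give $f_{w,k}\to f_w$ in $C^1$ and $\phi_{w,k}\to\phi_w$ in $C^\alpha$, from which $\mathcal L_{w,k}\psi\to\mathcal L_w\psi$ uniformly for $\psi\in C^\alpha(M)$. The cone $C_w$ into which all $h_{w,k}$ belong is invariant and the operators $\mathcal L_{w,k}$ act as uniform contractions in the Hilbert projective metric. A standard perturbation argument for uniformly contracting cone maps (as in Liverani--Saussol--Vaienti, or Castro--Varandas~\cite{Varandas1}) then gives $h_{w,k}\to h_w$ in $C^\alpha(M)$, $\nu_{w,k}\to\nu_w$ in the weak-$\ast$ topology, and $\lambda_{w,k}=\nu_{\theta w,k}(\mathcal L_{w,k}1)\to\lambda_w$. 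Combining fibre-wise convergence with the uniform bounds of the previous step, the dominated convergence theorem gives
\[
P_{F_k|\theta}(\phi_k)=\int\log\lambda_{w,k}\,d\mathbb P(w)\longrightarrow\int\log\lambda_w\,d\mathbb P(w)=P_{F|\theta}(\phi),
\]
and, for any bounded continuous $\Phi:X\times M\to\mathbb R$,
\[
\int\Phi\,d\mu_{F_k,\phi_k}=\int\!\!\int\Phi(w,x)\,h_{w,k}(x)\,d\nu_{w,k}(x)\,d\mathbb P(w)\longrightarrow\int\Phi\,d\mu_{F,\phi},
\]
which is exactly weak-$\ast$ convergence of the equilibrium states. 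Finally one must verify that the limit object $\mu_{F,\phi}$ is still non-uniformly expanding, which follows because condition (VI) is preserved under the perturbation and the limit measure has the form $h_w\,d\nu_w\,d\mathbb P(w)$ with $\nu_w$ supported on the non-uniformly expanding set $H$ constructed in Section~\ref{ee}; by Theorem~\ref{esteq} it then must coincide with the unique equilibrium state.

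The main obstacle I expect is the combined uniformity in $w$ and $k$. The convergence $F_k\to F$ is only measurable/integrable in $w$, not uniform, so the cone contraction argument must be run fibre-by-fibre and then lifted to $\mathbb P$-integrals via domination. The delicate point is showing that the perturbation constants produced by the projective-metric comparison of $\mathcal L_{w,k}$ with $\mathcal L_w$ (which typically involve $\|\mathcal L_{w,k}-\mathcal L_w\|$ on the cone) are themselves integrable in $w$; this in turn requires quantitative continuity of the transfer operator in terms of $d_{C^1}(f_{w,k},f_w)$ and $\|\phi_{w,k}-\phi_w\|_\alpha$, both of which are $\mathbb P$-integrable by the assumption that $(F_k,\phi_k)\to(F,\phi)$ in the product topology on $\mathcal D\times\mathbb L^1_{\mathbb P}(X,C^\alpha(M))$.
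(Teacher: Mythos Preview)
Your overall architecture --- reduce to fibre-wise convergence of the triple $(\lambda_w,h_w,\nu_w)$ and then pass to $\mathbb P$-integrals via uniform bounds --- is exactly the paper's strategy in Section~\ref{eq.stability}. The divergence is in how the fibre-wise convergence is obtained.

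The paper does not invoke a perturbation theorem for cone maps. Instead it argues by compactness and identification: it extracts subsequential limits $\bar\lambda_w,\bar\nu_w,\bar h_w$ (using the elementary bounds $\deg(f_{k,w})e^{\inf\phi_{k,w}}\le\lambda_{k,w}\le\deg(f_{k,w})e^{\sup\phi_{k,w}}$, weak-$\ast$ compactness of probabilities, and Ascoli--Arzel\`a for the $h_{k,w}$ sitting in a common cone), checks that $\bar\nu_w$ and $\bar h_w$ satisfy the eigen-relations for $\mathcal L_w$, and then appeals to the uniqueness clauses of Theorem~\ref{formalismo} to force $\bar\nu_w=\nu_w$, $\bar h_w=h_w$. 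The one step with real content is showing $\bar\lambda_w=\lambda_w$: the paper does this by a two-sided pressure estimate, bounding $\int\log\bar\lambda_w\,d\mathbb P$ from above by $P_{F|\theta}(\phi)$ via separated sets, and from below by rerunning the argument of Proposition~\ref{pressao} using the Gibbs property of $\bar\nu_w$.

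Your route replaces all of this with a ``standard perturbation argument for uniformly contracting cone maps'' citing \cite{Varandas1}. Be careful: that reference treats the autonomous case, where $h$ is the eigenvector of a single operator and $\lambda$ its spectral radius. Here the situation is non-autonomous: $h_w$ is the limit of compositions $\hat{\mathcal L}_{\theta^{-1}(w)}\circ\cdots\circ\hat{\mathcal L}_{\theta^{-n}(w)}(\mathbf 1)$ along the \emph{backward} orbit, and $\lambda_w=\nu_{\theta w}(\mathcal L_w\mathbf 1)$ is not a spectral eigenvalue but a cocycle value depending on $\nu_{\theta w}$. So the perturbation argument must control an infinite product of operators, and the normalization defining $\hat{\mathcal L}_w$ already involves the object $\nu_w$ whose convergence you are trying to prove. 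This circularity can be broken (work with unnormalized operators in the projective quotient, then normalize at the end), but it is genuinely more than a citation, and your ``main obstacle'' paragraph flags the integrability-in-$w$ issue without flagging this non-autonomous one. The paper's compactness-plus-uniqueness route sidesteps the circularity entirely, at the cost of the extra pressure argument for $\bar\lambda_w$.
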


We point out that we are fixing an invertible transformation $\theta:X\to X$ preserving an ergodic measure $\mathbb{P}.$ However, the proof of Theorem~\ref{estabilidade} remains true if we vary $\theta$ in the space of continuous functions.

	\section{Preliminaries}\label{preliminares}
	In this section we state some basic definitions and results about random dynamical systems that will be used throughout the text. We also recall the notion of hyperbolic times and projective metrics.	
	\subsection{Entropy and Topological Pressure}

Let $\mu\in \mathcal{M}_{\mathbb{P}}(F)$ be an $F$-invariant measure. Given a finite measurable partition $\xi$ of $M$ we set 
$$h_{\mu}(F|\theta; \xi):=\lim_{n\rightarrow +\infty} \frac{1}{n} \int_X H_{\mu_w}\left(\bigvee_{j=0}^{n-1}f_{w}^{-j}(\xi)\right) d\mathbb{P}(w)$$
where $H_{\nu}(\xi)=-\sum_{P\in \xi}\nu(P)\log\nu(P)$ for a finite partition $\xi$ and $\mu_w$ is the sample measure of $\mu.$
The \emph{entropy} of $(F|_{\theta}, \mu)$ is $$h_{\mu}(F|\theta):=\sup_{\xi}\{h_{\mu}(F|\theta;\xi)\}
$$
where the supremum is taken over all finite measurable partitions of $M$.

Denote by $\mathbb{L}^{1}_{\mathbb{P}}(X,C^0(M))$ the space of all measurable functions  $\phi:X\times M\rightarrow \mathbb{R}$ such that $\phi_{w}:M\rightarrow \mathbb{R}$ defined by $\phi_{w}(x):=\phi(w,x)$ is continuous for all $w\in X$ and $\lVert \phi \rVert_{1}=\int_{X}\lVert \phi_{w}\rVert_{\infty}\,d\mathbb{P}(w)<+\infty$. 

Fix $w\in X$. Given $\varepsilon >0$ and an integer $n\geq 1$, we say that a subset $F_n\subseteq M$ is $(w,n,\varepsilon)-$separated if for every two distinct points $y,z\in F_n$ there exists some $j\in \{0,1,...,n-1\}$ such that $d(f^{j}_{w}(y),f^{j}_{w}(z))>\varepsilon$. 

For $\phi \in  \mathbb{L}^{1}_{\mathbb{P}}(X,C^0(M))$, $\varepsilon >0$ and $n\geq 1$ we consider 
$$P_{F|\theta}(\phi)(w, n, \varepsilon)= \sup\left\{\sum_{y\in F_n}e^{ S_{n}\phi(w,y)} \ ;\ F_n\  \mbox{is a}\ (w,n,\varepsilon)-\mbox{separated set}\right\}$$
where $S_{n}\phi(w,y):=\sum^{n-1}_{j=0}\phi_{\theta^{j}(w)}( f^{j}_{w}(y))$.

The \emph{random topological pressure} of $\phi$ relative to $\theta$ is defined by	 
$$P_{F|\theta}(\phi)=\lim_{\varepsilon\rightarrow 0}\limsup_{n\rightarrow \infty} \frac{1}{n}\int_{X}\log P_{F|\theta}(\phi)(w, n, \varepsilon)\, d\mathbb{P}(w).$$
Thus it is well defined the pressure map as follows
$$
\begin{array}{cccc}
P_{F|\theta}\ : & \! \mathbb{L}^{1}_{\mathbb{P}}(X,C^0(M)) & \! \longrightarrow
& \!\mathbb{R}\cup\{\infty\} \\
& \! \phi & \! \longmapsto	 & \! P_{F|\theta}(\phi)
\end{array}
$$ 
In particular, the topological entropy of $F$ relative to $\theta$ is $h_{top}(F|_{\theta})=P_{F|\theta}(0)$.	

The topological pressure and the entropy are related by the well known Variational Principle. We refer the reader to \cite{Pei1} for a proof.      .

\begin{theorem}	\label{thprinvaria}
	If $X$ is a Lebesgue space then for any $\phi\in \mathbb{L}^{1}_{\mathbb{P}}(X,C^0(M))$ we have
	\begin{equation} 	\label{eqprinvaria}
	P_{F|\theta}(\phi)=\sup_{\mu\in \mathcal{M}_{\mathbb{P}}(F)}\biggl(h_{\mu}(F|\theta)+\int\phi\ d\mu \biggr).
	\end{equation}
\end{theorem}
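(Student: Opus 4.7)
The plan is to prove the equality by establishing the two inequalities separately, adapting the Walters scheme to the random setting as in Bogensch\"utz--Kifer--Liu. The hypothesis that $X$ is a Lebesgue space is used essentially for Rokhlin disintegration into sample measures $\{\mu_w\}$ and for measurable selection along the fibers.

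For the easy direction $h_\mu(F|\theta)+\int\phi\,d\mu \leq P_{F|\theta}(\phi)$, I fix $\mu\in\mathcal{M}_{\mathbb{P}}(F)$ and a finite measurable partition $\xi$ of $M$ with diameter less than $\varepsilon$. Writing $\xi_n^w := \bigvee_{j=0}^{n-1} f_w^{-j}(\xi)$ and applying the elementary inequality $\sum_i p_i(a_i - \log p_i) \leq \log \sum_i e^{a_i}$ with $a_P := \sup_{x \in P} S_n\phi(w,x)$, one obtains
$$H_{\mu_w}(\xi_n^w) + \int S_n\phi(w,\cdot)\,d\mu_w \;\leq\; \log \sum_{P \in \xi_n^w} e^{\sup_P S_n\phi(w,\cdot)}.$$
Picking one point $y_P$ per atom of $\xi_n^w$ yields a $(w,n,\varepsilon)$-separated subset of $M$, so the right-hand side is bounded by $\log P_{F|\theta}(\phi)(w,n,\varepsilon) + n\,\mathrm{osc}(\phi_w,\varepsilon)$. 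Integrating over $\mathbb{P}$, dividing by $n$ and letting $n\to\infty$ (the potential average becomes $\int\phi\,d\mu$ by $F$-invariance), then taking the supremum over $\xi$ and $\varepsilon\to 0$, yields the desired inequality.

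For the harder direction, I use a random version of the Misiurewicz construction. For each $n$ I choose, measurably in $w$ via a classical selection theorem, a maximal $(w,n,\varepsilon)$-separated set $E_n(w)\subset M$ such that $Z_n(w) := \sum_{y\in E_n(w)} e^{S_n\phi(w,y)}$ approximates $P_{F|\theta}(\phi)(w,n,\varepsilon)$. Form the probability $\nu_n\in \mathcal{M}_{\mathbb{P}}(X\times M)$ whose marginal is $\mathbb{P}$ and whose disintegration is $\sigma_n(w):=Z_n(w)^{-1}\sum_y e^{S_n\phi(w,y)}\delta_y$, and symmetrize by $\mu_n := \frac{1}{n}\sum_{j=0}^{n-1} F^j_*\nu_n$. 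Extract a weak-$*$ subsequential limit $\mu_\infty\in\mathcal{M}_{\mathbb{P}}(F)$; pick a finite partition $\xi$ of $M$ of diameter less than $\varepsilon$ with $\mu_\infty$-null boundary and compare $\frac{1}{n}\log Z_n(w)$ fiberwise to $\frac{1}{n}H_{\sigma_n(w)}(\xi_n^w) + \frac{1}{n}\int S_n\phi\,d\sigma_n(w)$; the $\varepsilon$-separation ensures each atom of $\xi_n^w$ meets $E_n(w)$ in at most one point, so these two quantities agree up to a small oscillation error. Integrating over $\mathbb{P}$ and passing to the limit gives
$$\limsup_{n\to\infty} \frac{1}{n}\int \log Z_n(w)\,d\mathbb{P}(w) \;\leq\; h_{\mu_\infty}(F|\theta;\xi) + \int \phi\,d\mu_\infty + O(\varepsilon),$$
from which the reverse variational inequality follows after $\varepsilon\to 0$.

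The main obstacle will be the interchange of limits in the upper bound: the entropy map $\mu\mapsto h_\mu(F|\theta)$ is not upper semicontinuous in general, so the partition-refinement step must be performed along a partition whose boundaries are $\mu_\infty$-null for the still-unknown limit measure. This is classical in the deterministic thermodynamical formalism, but here requires fibered versions of both entropy subadditivity across partitions and Misiurewicz' cardinality estimate to survive integration against $\mathbb{P}$; the Lebesgue-space assumption on $X$ is precisely what makes the fiberwise arguments measurable and the Rokhlin disintegration available, so that the deterministic proof can be executed sample-wise.
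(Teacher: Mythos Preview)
The paper does not actually prove this theorem: immediately before the statement it says ``We refer the reader to \cite{Pei1} for a proof'', and no argument is given. So there is nothing to compare your proposal to within the paper itself; you are supplying what the authors deliberately outsourced to Liu's article. Your outline is essentially the standard Bogensch\"utz--Kifer--Liu adaptation of Walters' proof, which is indeed what the cited reference contains.

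One technical slip in your easy direction: choosing one point $y_P$ per atom of $\xi_n^w$ does \emph{not} in general produce a $(w,n,\varepsilon)$-separated set merely because $\operatorname{diam}\xi<\varepsilon$; points in adjacent atoms can be arbitrarily close in the $d_n$-metric. The correct argument goes the other way: if $\operatorname{diam}\xi<\varepsilon$ then every atom of $\xi_n^w$ lies inside some $d_n$-ball of radius $\varepsilon$, so a maximal $(w,n,\varepsilon)$-separated set (hence $(w,n,\varepsilon)$-spanning) meets each atom's $\varepsilon$-neighbourhood, and one bounds $\sum_{P}e^{\sup_P S_n\phi}$ by a bounded multiple of $P_{F|\theta}(\phi)(w,n,2\varepsilon)$ up to an oscillation term. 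This is routine to fix and does not affect the structure of your argument. The hard direction via the fibered Misiurewicz construction is correctly sketched.
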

When $\mathbb{P}$ is ergodic, we can consider the supremum over ergodic measures. 

We say that a probability measure $\mu\in \mathcal{M}_{\mathbb{P}}(F)$ is an \emph{equilibrium state} for $(F|_{\theta}, \phi)$ relative to $\theta$ if the supremum (\ref{eqprinvaria}) is attained by $\mu$, i.e.,
\[
P_{F|\theta}(\phi)=h_{\mu}(F|\theta)+\int\phi \ d\mu.
\]
	
	Next we define random topological pressure using dynamical balls. We take as reference the deterministic case where this approach is characteristic in dimension theory. We refer the reader to \cite{Pesin}.	
	
%Let $\mathbb{L}^{1}_{\mathbb{P}}(X,C^0(M))$ be the space of all measurable functions  $\phi:X\times M\rightarrow \mathbb{R}$ such that $\phi_{w}:M\rightarrow \mathbb{R}$ is continuous for all $w\in X$ and $\lVert \phi \rVert_{1}=\int_{X}\lVert \phi_{w}\rVert_{\infty}\,d\mathbb{P}(w)<+\infty$. 	
Fix $\varepsilon>0$ and $w\in X$. For $n\in \mathbb{N}$, $x\in M$, let $B_{w}(x,n,\varepsilon)$ be the dynamical ball
	\[
	B_{w}(x,n,\varepsilon):=\{y\in M: d(f_{w}^{j}(x),f_{w}^{j}(y) )< \varepsilon, \ \mbox{ for} \ \ 0\leq j \leq n \}.
	\]
	We denote by $G_{(N,w)}$ the collection of dynamical balls:
	\[
	G_{(N,w)}:=\{ B_{w}(x,n,\varepsilon): x\in M \ \mbox{and} \ n\ge N \}.
	\]
	
	Let $U_w$ be a finite or countable family of $G_{(N,w)}$ which covers $M$. For every $\beta\in \mathbb{R}$ and $\phi \in  \mathbb{L}^{1}_{\mathbb{P}}(X,C^0(M))$ let
	\[
	m_{\beta}(w, \phi, F|\theta, \varepsilon, N)=\inf_{U_{w}\subset G_{(N,w)}}\left\{ \sum_{B_{w}(x,n,\varepsilon)\in U_w} e^{-\beta n + S_{n}\phi(B_{w}(x,n,\delta))}  \right\}
		\]
	where $S_{n}\phi(B_{w}(x,n,\varepsilon))=\sup_{y\in B_{w}(x,n,\varepsilon)}\sum_{j=0}^{n-1}\phi_{\theta^{j}(w)}(f^{j}_{w}(y))$. As $N$ goes to infinity we define 
	\[
	m_{\beta}(w, \phi, F|\theta, \varepsilon)=\lim_{N\to \infty}m_{\beta}(w, \phi, F|\theta, \varepsilon, N).
	\]
	The existence of the limit above is guaranteed by the function $m_{\beta}(w, \phi, F|\theta, \varepsilon, N)$ to be increasing with $N$. Taking the infimum over $\beta$ we call
	\[
	P_{F|\theta}(w, \phi, \varepsilon)=\inf\{\beta: m_{\beta}(w, \phi, F|\theta, \varepsilon)=0 \}
	\]
Since $P_{F|\theta}(w, \phi, \varepsilon)$ is decreasing on $\varepsilon$ we can take the limit	$$P_{F|\theta} (w, \phi)=\lim_{\varepsilon\rightarrow 0} {P_{F|\theta} (w, \phi, \varepsilon)}.$$
Finally, the \emph{random topological pressure} of $(F|\theta, \phi)$ can be defined as
	\[
	P_{F|\theta} (\phi)= \int_X  P_{F|\theta} (w, \phi)\ d\mathbb{P}(w). 
\]
%Since $\mathbb{P}$ is an ergodic measure, we have $P_{F|\theta} (\phi)= P_{F|\theta} (w, \phi)$ for $\mathbb{P}$-a.e. $w\in X$. 
%See a proof given by Bogensch\"utz and Ochs  in \cite{BogenschutzOchs}. 	

	\subsection{Hyperbolic times} In order to explore the non-uniform expansion of the set $H$ we need the notion of hyperbolic times. The reader can obtain more details of this concept in \cite{Alves3, Oliveira2}.

\begin{definition}\label{timehyp}
We say that $n\in \mathbb{N}$ is a $c$-hyperbolic time for $(w,x)\in X\times M$  if  
 \begin{equation}\label{eq.timehyp2}
  \prod^{n-1}_{j=n-k}L_{\theta^{j}(w)}(f^{j}_{w}(x))^{-1} \leqslant e^{-ck}, \quad \text{for every} \ 1\leqslant k\leqslant n. 
 \end{equation}
\end{definition}

It is a well known fact that if $\eta$ is a non-uniformly expanding measure with exponent $c$ then $\eta$-almost every point $(w,x)\in H$ has infinitely many $c$-hyperbolic times.
A proof of this result can be found in \cite{Alves3}.

\begin{lemma}
\label{lemacontra}
Given $c>0$ there exists $\delta=\delta(c)>0$ such that for $\mathbb{P}$-a.e. $w\in X$ holds that: if $n$ is a $c$-hyperbolic time of $(w,x)$ then the dynamical ball $B_{w}(x, n, \delta)$ around $x$ is mapped homeomorphically onto the ball $B_{\theta^{n}(w)}(f^{n}_{w}(x), \delta).$ Moreover, for $z\in B_{w}(x, n, \delta)$ and $f^{n}_{w}(z)\in B_{\theta^{n}(w)}(f^{n}_{w}(x), \delta)$ we have 
$$
d(f^{n-k}_{w}(z), f^{n-k}_{w}(x))\leq e^{-ck/2}d(f^{n}_{w}(z),f^{n}_{w}(x)), 
$$
for each $1\leq k\leq n$.
\end{lemma}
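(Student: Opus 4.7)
The strategy I would propose follows the classical Alves--Bonatti--Viana treatment of hyperbolic times, adapted to the random setting via hypotheses~(I) and~(VI). The idea is to pick $\delta$ small enough that on balls of radius $\delta$ the local Lipschitz constants $L_{\theta^j(w)}$ vary by at most a factor $e^{c/4}$, so that the hyperbolic-time bound~\eqref{eq.timehyp2} controls not only the orbit of $x$ but also orbits of all nearby points.

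First I would choose $\delta$. Each $L_w$ is continuous on the compact manifold $M$ and bounded away from zero, hence uniformly continuous. Combining this pointwise uniform continuity with the orbit-uniform bounds $\tilde L_w$ and $\tilde\sigma_w$ provided by~(VI) together with a Lusin/Egorov-type argument on the measurable family $w\mapsto L_w$, one obtains a constant $\delta=\delta(c)>0$ and a $\mathbb{P}$-full subset $X'\subset X$ with the property that for every $w\in X'$, every $j\geq 0$ and all $x,y\in M$ with $d(x,y)<\delta$,
\[
L_{\theta^j(w)}(y)\leq e^{c/4}L_{\theta^j(w)}(x).
\]
Shrinking $\delta$ further so that it fits inside the neighborhoods of injectivity $U_x$ supplied by the hypothesis on the generating maps, we may additionally assume that $f_{\theta^j(w)}$ is invertible on the ball $B(f^j_w(x),\delta)$ with Lipschitz inverse of constant $L_{\theta^j(w)}(f^j_w(x))$.

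Next I would carry out the backward pull-back construction. Fix $w\in X'$ and a $c$-hyperbolic time $n$ of $(w,x)$. Set $V_0:=B_{\theta^n(w)}(f^n_w(x),\delta)$ and inductively $V_k:=f_{\theta^{n-k}(w)}^{-1}(V_{k-1})$, using the branch of $f_{\theta^{n-k}(w)}^{-1}$ that sends $f^{n-k+1}_w(x)$ to $f^{n-k}_w(x)$. A direct induction on $k$ using Step~1 and~\eqref{eq.timehyp2} shows that for every $y\in V_k$,
\[
d(y,f^{n-k}_w(x))\leq \delta\, e^{ck/4}\prod_{j=n-k}^{n-1}L_{\theta^j(w)}(f^j_w(x))\leq \delta\, e^{-3ck/4},
\]
so each $V_k\subset B(f^{n-k}_w(x),\delta)$ and the pull-back construction can be continued. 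Applying the same estimate pointwise to any $z$ with $f^n_w(z)\in V_0$ produces
\[
d(f^{n-k}_w(z),f^{n-k}_w(x))\leq e^{-ck/2}\, d(f^n_w(z),f^n_w(x)),\qquad 1\leq k\leq n,
\]
which is the announced contraction and at the same time gives $V_n\subset B_w(x,n,\delta)$. The reverse inclusion follows from the injectivity of $f_{\theta^j(w)}$ on each $B(f^j_w(x),\delta)$, so $f^n_w:V_n\to V_0$ is the desired homeomorphism.

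The main obstacle is the first step: extracting a \emph{single} constant $\delta=\delta(c)$ that works simultaneously for $\mathbb{P}$-a.e. $w$ and for every iterate $\theta^j(w)$ along its orbit. The orbit-uniform bounds built into~(VI) are exactly what allow one to upgrade the pointwise uniform continuity of each $L_w$ to an essentially uniform modulus of continuity for the whole family $\{L_{\theta^j(w)}\}_{j\geq 0}$; once this has been secured, the rest of the argument is a routine adaptation of the deterministic hyperbolic-times lemma.
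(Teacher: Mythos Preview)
Your backward pull-back construction in Step~2 is exactly the standard argument, and it is what the paper invokes: the paper gives no self-contained proof but simply refers to Lemma~5.5 of \cite{Oliveira2}, replacing $\log\|Df_w(\cdot)^{-1}\|$ by $\log L_w(\cdot)$ and using ``the Lipschitz property of the inverse branches of $f_w$''. The problem lies in your Step~1, which is both unnecessary and, as written, has a gap.

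It is unnecessary because in the paper's hypotheses $L_w(x)$ is \emph{not} an infinitesimal quantity: it is declared to be the Lipschitz constant of the inverse branch on the entire image $f_w(U_x)$. Hence, once $\delta$ is chosen so that each ball $B(f_w(x),\delta)$ sits inside the corresponding $f_w(U_x)$, the pull-back of $V_{k-1}$ through the branch of $f_{\theta^{n-k}(w)}^{-1}$ centered at $f^{n-k}_w(x)$ contracts by the exact factor $L_{\theta^{n-k}(w)}(f^{n-k}_w(x))$, with no $e^{c/4}$ correction. The induction then gives
\[
d\bigl(y,f^{n-k}_w(x)\bigr)\le\delta\prod_{j=n-k}^{n-1}L_{\theta^j(w)}\bigl(f^j_w(x)\bigr)\le\delta\, e^{-ck}
\]
directly from~\eqref{eq.timehyp2}, and the lemma follows. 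No modulus-of-continuity control on $L_w$ is required.

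The gap is that a Lusin/Egorov argument cannot produce a $\mathbb P$-full set $X'$ with the orbit-uniform property you claim. Such an argument yields, for each $\varepsilon>0$, a set $X_\varepsilon$ of measure at least $1-\varepsilon$ on which the moduli of continuity are uniform; but you need the bound at \emph{every} iterate $\theta^j(w)$, and by ergodicity almost every orbit visits $X\setminus X_\varepsilon$ infinitely often. Condition~(VI) controls only the \emph{values} of $L_{\theta^j(w)}$ along the orbit, not their oscillation, so it does not close this hole. Dropping Step~1 and using the neighborhood Lipschitz property directly removes the obstacle you yourself flag, and then your Step~2 goes through unchanged.
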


By the definition of hyperbolic times and the Lipschitz property of the inverse branches of $f_w$, if we replace $\log\|Df_w(\cdot)\|$ by $\log L_w(\cdot)$ then the proof of the lemma above is analogous to the proof of Lemma 5.5. in~\cite{Oliveira2}.

Let $\mathcal{B}$ be the Borel $\sigma$-algebra of $M.$ We say that $\xi$ is a  $\mu$-\emph{generating partition} if
$$\bigvee_{j=0}^{+\infty}f_{w}^{-j}(\xi)\equiv_{\mu}\mathcal{B}\ \ \mbox{for}\ \ \mathbb{P}-a.e.\ \ w\in X.$$ 

The next result states that every non-uniformly expanding measure admits a generating partition. See a proof of this in \cite{Oliveira2}.
\begin{lemma}\label{gp} Given $\eta$ a non-uniformly expanding measure with exponent $c>0$, consider $\delta=\delta(c)$ as in Lemma~\ref{lemacontra}. Then any measurable partition $\mathcal{P}$ of $M$ with diameter less than $\delta$ is an $\eta$-generating partition.
\end{lemma}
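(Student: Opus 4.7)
The plan is to show that, along the subsequence of $c$-hyperbolic times, the atoms of $\bigvee_{j=0}^{n}f_w^{-j}(\mathcal{P})$ shrink exponentially fast to points $\eta_w$-almost everywhere, and to deduce from this that the infinite join generates the Borel $\sigma$-algebra modulo $\eta$.

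Fix $w\in X$ in a full $\mathbb{P}$-measure set and let $\eta_w$ be the corresponding sample measure of $\eta$. Since $\eta$ is non-uniformly expanding with exponent $c$, the invoked result (from \cite{Alves3}) gives that $\eta_w$-almost every $x\in M$ admits infinitely many $c$-hyperbolic times. For such $x$ and any $n\ge1$, let
\[
\mathcal{P}_w^n(x):=\bigvee_{j=0}^{n}f_w^{-j}(\mathcal{P})(x)
\]
denote the atom through $x$ of the $n$-th refinement. Because $\operatorname{diam}(\mathcal{P})<\delta$, any $y\in\mathcal{P}_w^n(x)$ lies in the same element of $\mathcal{P}$ as $x$ at every iterate $0\leq j\leq n$, whence $d(f_w^{j}(y),f_w^{j}(x))<\delta$ and therefore $y\in B_w(x,n,\delta)$. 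Thus
\[
\mathcal{P}_w^n(x)\subset B_w(x,n,\delta).
\]

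Now apply Lemma~\ref{lemacontra} at a $c$-hyperbolic time $n$ of $(w,x)$: the contraction estimate taken at $k=n$ yields, for every $z\in B_w(x,n,\delta)$,
\[
d(z,x)\leq e^{-cn/2}\,d(f_w^{n}(z),f_w^{n}(x))\leq 2\delta\, e^{-cn/2}.
\]
Hence $\operatorname{diam}\bigl(\mathcal{P}_w^n(x)\bigr)\leq 2\delta\,e^{-cn/2}$, and since $\eta_w$-a.e. $x$ has infinitely many hyperbolic times, we conclude that $\operatorname{diam}(\mathcal{P}_w^n(x))\to 0$ along this subsequence, for $\eta_w$-a.e. $x$.

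To conclude that $\mathcal{F}_w:=\bigvee_{j=0}^{\infty}f_w^{-j}(\mathcal{P})\equiv_{\eta_w}\mathcal{B}$, I would use the martingale/conditional-expectation argument: for any $\varphi\in C^0(M)$, the reverse martingale $\mathbb{E}_{\eta_w}[\varphi\mid \mathcal{P}_w^n]$ converges $\eta_w$-a.e. to $\mathbb{E}_{\eta_w}[\varphi\mid\mathcal{F}_w]$. On the other hand, since atoms shrink to points $\eta_w$-a.e.\ and $\varphi$ is continuous, this conditional expectation converges pointwise to $\varphi(x)$. Hence every continuous function is $\mathcal{F}_w$-measurable mod $\eta_w$, and a standard monotone-class argument upgrades this to $\mathcal{F}_w\equiv_{\eta_w}\mathcal{B}$. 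Since the $w$ for which this holds form a full $\mathbb{P}$-measure set, $\mathcal{P}$ is an $\eta$-generating partition.

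The only delicate point is the passage from ``atoms shrink to points $\eta_w$-a.e.''\ to ``the join generates the Borel $\sigma$-algebra mod $\eta_w$''; everything else is bookkeeping around the two ingredients already established, namely the abundance of hyperbolic times for non-uniformly expanding measures and the distortion/contraction statement of Lemma~\ref{lemacontra}.
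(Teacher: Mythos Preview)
Your argument is correct and is the standard one: atoms of the refined partition sit inside dynamical balls, Lemma~\ref{lemacontra} forces these to contract exponentially at hyperbolic times, and the usual measure-theoretic step (atoms shrinking to points a.e.\ implies the infinite join equals $\mathcal{B}$ mod the measure) finishes the job. The paper does not actually give its own proof of this lemma---it simply refers the reader to \cite{Oliveira2}---so there is nothing to compare beyond noting that your write-up is exactly the kind of argument that reference contains, adapted to the random setting; the harmless overestimate $2\delta$ in place of $\delta$ does not affect anything.
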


\subsection{Projective metrics} Consider $V$ a Banach space. We say that a subset $\mathcal{C}\subset V\setminus\{0\}$ is a \emph{cone} in $V$ if $\mathcal{C}\cap (-\mathcal{C}) = \{0\} $ and  $\lambda \cdot v\in \mathcal{C}$ for all $v\in \mathcal{C}$, $\lambda >0$.
Moreover, a cone $\mathcal{C}$ is  \emph{convex} if $v,w\in \mathcal{C}$ and $\lambda,\eta >0$  we have $\lambda \cdot v + \eta\cdot w\in \mathcal{C}.$
The closure of a cone $\mathcal{C}$, denoted by $\bar{C}$, is the set
\[
\bar{\mathcal{C}} := \left\{ w\in V| \,\mbox{there are}\, v\in \mathcal{C}\, \mbox{and}\, \lambda_n\to 0 \,\mbox{such that}\, (w+\lambda_nv)\in \mathcal{C}\, \mbox{for all}\, n\geq 1\right\}.
\]
We say that a cone $\mathcal{C} $ is \emph{closed} if $\bar{\mathcal{C}}= \mathcal{C} \cup \{0\}$.

Consider $\mathcal{C}$ a closed convex cone. Given $v, w \in \mathcal{C} $ define 
\begin{equation*}\label{A e B}
A(v,w)= \sup \left\{t>0 : w-tv \in \mathcal{C}  \right\} \ \mbox{and} \ B(v,w)=\inf \left\{s>0 : sv -w \in \mathcal{C}  \right\}.
\end{equation*}
where by convention $\sup \emptyset = 0$ and $\inf \emptyset = +\infty$. It is straightforward to check that $A(v,w)$ is finite, $B(v,w)$ is positive and $A(v,w)\leq B(v,w)$ for all $v, w \in \mathcal{C}$. 
We set  
$$ \Theta(v,w) = \log\left( \frac{B(v,w)}{A(v,w)} \right).$$ 
From the properties of $A$ and $B$ follows that $\Theta(v,w)$ is well-defined and takes values in $[0,+\infty]$. Notice that $\Theta(v,w)= 0$ if and only if $v= tw$ for some $t>0$. Therefore  $\Theta$ defines a pseudo-metric in the cone $\mathcal{C}$ and so, it induces a metric on a projective quotient space of $\mathcal{C}$. This metric is called  the \emph{projective metric of} $\mathcal{C}$.  

It is easy to verify that the projective metric depends monotonically on the cone: if $\mathcal{C}_1 \subset \mathcal{C}_2$ are two convex cones in $V$, then 
$\Theta_2(v,w) \leq \Theta_1(v,w)$ {for any} $v,w \in \mathcal{C}_1$, 
where $\Theta_1$ and $\Theta_2$ are the projective metrics in $\mathcal{C}_1$ and $\mathcal{C}_2$,  respectively.

In particular if $V_1,V_2$ are complete vector spaces and $L:{V}_1 \to {V}_2 $ is a linear operator such that $L(\mathcal{C}_{1}) \subset \mathcal{C}_{2}$ for $\mathcal{C}_1, \mathcal{C}_2$ convex cones in ${V}_{1}, {V}_{2}$ respectively then $\Theta_2(L(v),L(w)) \leq \Theta_1(v,w)$ {for any} $v,w \in \mathcal{C}_1$, where $\Theta_1$ and $\Theta_2$ are the projective metrics in $\mathcal{C}_1$ and $\mathcal{C}_2$, respectively. The next result states that $L$ will be a strict contraction if $L(\mathcal{C}_{1})$ has finite diameter in $\mathcal{C}_{2}$. 

\begin{theorem}\label{contcone}
	Let $\mathcal{C}_{1}$ and $\mathcal{C}_{2}$ be closed convex cones in the Banach spaces ${V}_1$ and ${V}_2$, respectively. If $L:V_1 \to V_2 $ is a linear operator such that $L(\mathcal{C}_{1}) \subset \mathcal{C}_{2}$ and  $\Delta = {\rm diam}_{\Theta_2}(L(\mathcal{C}_{1})) < \infty$, then
	$$\Theta_2 \left(L(\varphi), L(\psi) \right) \leq (1-e^{-\Delta}) \cdot \Theta_1 \left( \varphi, \psi \right) \quad \mbox{for all} \ \varphi, \psi \in \mathcal{C}_{1}.$$
\end{theorem}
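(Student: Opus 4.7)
The plan is to prove this Birkhoff-type contraction in three stages: first relate the cone functionals of $L\varphi, L\psi$ to those of two auxiliary vectors lying in $L(\mathcal{C}_1)$, then use the diameter hypothesis to control those auxiliary quantities, and finally reduce the desired bound to a one-variable inequality of Möbius type.

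If $\Theta_1(\varphi,\psi)=\infty$ the bound is vacuous, so I assume $0<A_1\leq B_1<\infty$, where I write $A_1:=A(\varphi,\psi)$ and $B_1:=B(\varphi,\psi)$ for the cone functionals on $\mathcal{C}_1$. Fix any $A<A_1$ and $B>B_1$. Then $\psi-A\varphi,\ B\varphi-\psi\in\mathcal{C}_1$ (by convexity and positive scaling of $\mathcal{C}_1$), so their $L$-images
$$u:=L\psi-A\,L\varphi,\qquad v:=B\,L\varphi-L\psi$$
both lie in $L(\mathcal{C}_1)\subset\mathcal{C}_2$. Writing $a:=A(u,v)$ and $b:=B(u,v)$ for the functionals of $\mathcal{C}_2$, the diameter hypothesis $\mathrm{diam}_{\Theta_2}(L(\mathcal{C}_1))\leq\Delta$ forces $\Theta_2(u,v)\leq\Delta$, i.e.\ $a/b\geq e^{-\Delta}$.

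The linear identities
$$v-tu=(B+tA)L\varphi-(1+t)L\psi,\qquad su-v=(s+1)L\psi-(sA+B)L\varphi$$
convert cone memberships of the left-hand sides into bounds for $A(L\varphi,L\psi)$ and $B(L\varphi,L\psi)$. Letting $t\uparrow a$ and $s\downarrow b$, and using that $\mathcal{C}_2$ is closed, I obtain
$$B(L\varphi,L\psi)\leq \frac{B+aA}{1+a},\qquad A(L\varphi,L\psi)\geq \frac{bA+B}{1+b}.$$
Dividing and setting $r:=B/A$ gives
$$\frac{B(L\varphi,L\psi)}{A(L\varphi,L\psi)}\ \leq\ \frac{(r+a)(b+1)}{(r+b)(a+1)}.$$

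The main analytic step (and the part I expect to be the most delicate) is to verify, under the constraint $a\geq be^{-\Delta}$, the elementary inequality
$$\frac{(r+a)(b+1)}{(r+b)(a+1)}\ \leq\ r^{\,1-e^{-\Delta}}\qquad\text{for all }r\geq 1.$$
I would set $\Psi(r):=(1-e^{-\Delta})\log r-\log\!\big((r+a)(b+1)\big)+\log\!\big((r+b)(a+1)\big)$, observe that $\Psi(1)=0$, and compute that $\Psi'(r)$ has the sign of the quadratic
$$Q(r):=(1-e^{-\Delta})(r+a)(r+b)-(b-a)r,$$
whose leading and constant terms $(1-e^{-\Delta})$ and $(1-e^{-\Delta})ab$ are non-negative, and whose linear coefficient simplifies to $a(2-e^{-\Delta})-b\,e^{-\Delta}$; this last quantity is non-negative precisely because $a\geq be^{-\Delta}$ (indeed it then bounds from below $be^{-\Delta}(1-e^{-\Delta})\geq 0$). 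Hence $\Psi\geq 0$ on $[1,\infty)$. Taking logarithms of the projective-ratio bound and letting $A\uparrow A_1$, $B\downarrow B_1$ (so $r\to B_1/A_1=e^{\Theta_1(\varphi,\psi)}\geq 1$) yields the desired contraction $\Theta_2(L\varphi,L\psi)\leq(1-e^{-\Delta})\,\Theta_1(\varphi,\psi)$.
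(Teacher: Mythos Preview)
The paper does not actually prove this statement; Theorem~\ref{contcone} is quoted in the preliminaries as a classical fact (Birkhoff's cone contraction theorem) and used as a black box. So there is no ``paper's own proof'' to compare against.

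Your argument is correct and is essentially the standard Birkhoff proof. The reduction to the Möbius-type inequality via the auxiliary vectors $u=L\psi-A\,L\varphi$ and $v=B\,L\varphi-L\psi$ is exactly right, and your computation of the linear coefficient of $Q(r)$ and its sign under the constraint $a\geq b\,e^{-\Delta}$ is clean. Two very minor remarks: (i) the appeal to closedness of $\mathcal{C}_2$ when letting $t\uparrow a$ and $s\downarrow b$ is unnecessary, since you already have $B(L\varphi,L\psi)\leq\frac{B+tA}{1+t}$ for every $t<a$ and can simply pass to the limit on the right-hand side; (ii) the finite-diameter hypothesis guarantees $0<a\leq b<\infty$, so there is no degeneracy to worry about in $Q$. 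Otherwise the write-up is complete.
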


In this work we will restrict our attention to cones of locally H\"older continuous observables.  We prove that, applying the last result, the transfer operator is a contraction in this setting. Next it follows some definitions.

We fix $\delta>0$ and we say that a function $\varphi:M\to \mathbb{R}$ is $(C, \alpha)$-H\"older continuous in balls of radius $\delta $ if for some constant $C>0$  follows that $$|\varphi(x)-\varphi(y)|\leq Cd(x, y)^\alpha \ \mbox{ for all} \  y \in B(x, \delta).$$
Denote by $|\varphi|_{\alpha,\delta}$ the smallest H\"older constant of $\varphi$ in balls of radius $\delta>0$.

%We consider the space of $\alpha$-H\"older continuous observables endowed with the norm $\| \cdot \| := | \cdot|_0 + |\cdot|_{\alpha}$.

The next lemma states that every locally H\"older continuous function defined on a compact and connected metric space is H\"older continuous.

\begin{lemma}\label{bolas} Let $M$ be a compact and connected metric space. Given $\delta > 0$ there exists $m \geqslant 1$ (depending only on $\delta$) such that the following holds: if $\varphi: M \to \mathbb{R}$ is $(C,\alpha)$-H\"older continuous in balls of radius $\delta$ then it is $(Cm, \alpha)$-H\"older continuous.
\end{lemma}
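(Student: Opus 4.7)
The plan is to globalize the local H\"older estimate by chaining together short controlled jumps. Since the hypothesis handles all pairs within distance $\delta$ directly (that range is trivial because $|\varphi(x) - \varphi(y)| \leq Cd(x,y)^\alpha \leq Cm\, d(x,y)^\alpha$ for any $m \geq 1$), the only real work is to produce a uniform bound $|\varphi(x)-\varphi(y)| \leq K$ when $d(x,y) \geq \delta$; then the inequality $d(x,y)^\alpha \geq \delta^\alpha$ will automatically convert this into the desired H\"older inequality with constant $K/\delta^\alpha$.

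First I would use compactness to fix once and for all a finite $\delta/2$-net $\{p_1,\dots,p_N\}\subset M$, with $N$ depending only on $\delta$ (the ambient $M$ being fixed). Next I would introduce the auxiliary graph $G$ whose vertices are the $p_i$ and whose edges join $p_i$ to $p_j$ whenever $d(p_i,p_j)<\delta$. The central step, and the only one where the hypothesis that $M$ is connected enters essentially, is to show that $G$ is connected. If $G$ admitted a non-trivial splitting of its vertex set into components $V_1\sqcup V_2$, the open sets $A_k=\bigcup_{p_i\in V_k}B(p_i,\delta/2)$ would cover $M$ by the net property and have to be disjoint: any $x\in A_1\cap A_2$ would produce $p_i\in V_1$, $p_j\in V_2$ with $d(p_i,p_j)\leq d(p_i,x)+d(x,p_j)<\delta$, forcing the edge $p_i\sim p_j$, a contradiction. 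This would yield a disconnection of $M$, contrary to assumption.

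With $G$ connected on $N$ vertices, any two of its vertices are linked by a path of at most $N-1$ edges. Given arbitrary $x,y\in M$ with $d(x,y)\geq \delta$, I would then choose $p_i,p_j$ in the net within distance $\delta/2$ of $x$ and $y$ respectively and concatenate to obtain a chain $x=z_0,z_1,\dots,z_\ell=y$ with $\ell\leq N+1$ and every $d(z_k,z_{k+1})<\delta$. Telescoping and applying the local hypothesis gives
$$|\varphi(x)-\varphi(y)|\leq \sum_{k=0}^{\ell-1}|\varphi(z_k)-\varphi(z_{k+1})|\leq C(N+1)\delta^\alpha\leq C(N+1)\, d(x,y)^\alpha.$$
Setting $m:=N+1$, which depends only on $\delta$, completes the argument. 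The only step I expect to require genuine care is the connectedness of $G$; everything else reduces to a standard covering-and-telescoping computation.
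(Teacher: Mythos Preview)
Your proof is correct and follows essentially the same chain-and-telescope strategy as the paper: use compactness to produce a finite net of size $N$ depending only on $\delta$, use connectedness of $M$ to link any two points by a chain of at most $N+1$ short jumps, and telescope the local H\"older estimate along the chain to get the global constant $m=N+1$. Your graph-connectivity argument for the $\delta/2$-net is a more explicit justification of the chain's existence than the paper gives, and your case split $d(x,y)<\delta$ versus $d(x,y)\geq\delta$ replaces the paper's (unproved) assertion that the chain can additionally satisfy $d(z_i,z_{i+1})\leq d(x,y)$; both devices serve the same purpose and yield the same bound.
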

\begin{proof} The compactness allow us to cover $M$ with $N$ balls of radius $\delta$ where  $N$ depends only on $\delta$. Moreveover, since $M$ is connected, given $x, y\in M$ there are $z_{0}=x, z_{1},..., z_{N+1}=y$ satisfying $d(z_{i}, z_{i+1})\leq\delta$ and $d(z_{i}, z_{i+1})\leq d(x, y)$ for all $i=0,\cdots, N$. Since $\varphi$ is $(C, \alpha)$-H\"older continuous in balls of radius $\delta$ we have that	$$\left |\varphi(x)-\varphi(y)\right |\leq \sum_{i=0}^{N} \left | \varphi(z_{i})-\varphi(z_{i+1}) \right |\leq \sum_{i=0}^{N} C d(z_{i}, z_{i+1})^{\alpha}\leq C(N\!+\!1)d(x, y)^{\alpha}$$ which implies that $\varphi$ is $(C\cdot m, \alpha)$-H\"older continuous for $m=N\!+\!1$.
\end{proof}

Notice that the same argument used in the lemma above gives an estimate for the H\"older constant of $\varphi$ in balls of radius $(1+r)\delta$ for $0 < r \leq 1.$  
Indeed, let $r\in[0, 1]$ and $x, y\in M$ with $d(x, y)<(1+r)\delta$. Since $M$ is connected there exists $z\in M$ such that $d(x, z)=\delta$ and $d(z, y)<rd(x, z)$. Thus
\begin{eqnarray*}\label{eq do r}
	\left |\varphi(x)-\varphi(y)\right |&\leq& \left |\varphi(x)-\varphi(z)\right |+\left |\varphi(z)-\varphi(y)\right | \nonumber  \\
	&\leq & Cd(x, z)^{\alpha}+Cd(z, y)^{\alpha}\leq C(1+r^{\alpha})d(x, y)^{\alpha}.
\end{eqnarray*}
Therefore we conclude that if $\varphi: M \to \mathbb{R}$ is $(C, \alpha)$-H\"older continuous  in balls of radius $\delta$ then $\varphi$ is $(C(1+r^{\alpha}), \alpha)$-H\"older continuous in balls of radius $(1+r)\delta$ for each $0 < r \leq 1.$

For each $k>0$ we consider the convex cone of locally H\"older continuous observables defined on $M$ by 
\begin{equation} \label{cone holder}
\mathcal{C}^{k}_{\delta}= \left\{ \varphi:M\to \mathbb{R} : \varphi>0 \ \mbox{and} \  \frac{|\varphi|_{\alpha,\delta}}{\inf \varphi} \leq k \right\}.
\end{equation}

%Notice that by definition $\mathcal{C}^{k}_{\delta}(w_0) = \mathcal{C}^{k}_{\delta}(w_1)$, for all $w_0, w_1 \in X$. By simplicity we denote these cones by $ \mathcal{C}^{k}_{\delta}$. Now we define
%We also consider the convex cone of locally H\"older continuous observables  defined on $X\times M$  as
%\begin{equation}
%\label{equCkdelta}
%\mathcal{C}^{k}_{\delta}(X\times M):= \{\varphi: X\times M \to \mathbb{R} \ : \ \varphi_w:=\varphi(w, - ) \in \mathcal{C}^k_\delta \quad \forall \ w\in X \}
%\end{equation}

It follows by definition that $\mathcal{C}^{k_1}_{\delta} \subset \mathcal{C}^{k_2,}_{\delta}$ if $k_1 \leq k_2$.

From Lemma~\ref{bolas} and from the definition of $|\varphi|_{\alpha,\delta}$ we have that 
\begin{equation}\label{supcone}	\sup \varphi - \inf\varphi \leq |\varphi|_{\alpha,\delta}\cdot m \cdot d(x, y)^{\alpha} \leq (\inf \varphi \cdot k )\cdot m\cdot (\mbox{diam}\,M)^{\alpha}
\end{equation}
and thus $\sup \varphi \leq \inf \varphi\cdot  (1+ m(\mbox{diam}\,M)^{\alpha}k)$ for any $\varphi \in \mathcal{C}^{k}_{\delta}.$

In the cone $\mathcal{C}^{k}_{\delta}$ of locally H\"older continuous observables we can give a more explicit expression for the projective metric. We refer the reader to \cite{Varandas1} for its proof.  

\begin{lemma}\label{metrica cone}
	The projective metric $\Theta_k$ in the cone $\mathcal{C}^{k}_{\delta}$ is given by \[\Theta_k(\varphi , \psi)= \log \left( \frac{B_k(\varphi, \psi)}{ A_k(\varphi, \psi) }\right),\] where  
	$$A_k(\varphi, \psi):=\displaystyle\inf_{d(x,y)< \delta, z\in M} \frac{k|x-y|^{\alpha}\psi(z) - (\psi(x)-\psi(y))}{k|x-y|^{\alpha}\varphi(z) - (\varphi(x) - \varphi(y))} $$
	and $$B_k(\varphi, \psi) := \displaystyle\sup_{d(x,y)< \delta, z\in M} \frac{k|x-y|^{\alpha}\psi(z) - (\psi(x)-\psi(y))}{k|x-y|^{\alpha}\varphi(z) - (\varphi(x) - \varphi(y))}.$$
	In particular, we have that  $$A_k(\varphi, \psi)\leq \inf_{x\in M}\left\{\frac{\varphi(x)}{\psi(x)}\right\} \quad \mbox{and} \quad B_k(\varphi, \psi)\geq \sup_{x\in M}\left\{\frac{\varphi(x)}{\psi(x)} \right\}.$$
\end{lemma}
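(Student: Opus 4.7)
My plan is to first recast cone membership as a linear inequality in the function, then use linearity to extract the formulas for $A_k$ and $B_k$, and finally derive the auxiliary bounds from positivity of the limiting element on the boundary of the cone.

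For the first step, I would observe that a positive function $g\colon M\to\mathbb{R}$ lies in $\mathcal{C}^{k}_\delta$ if and only if, for every pair $x,y\in M$ with $d(x,y)<\delta$ and every $z\in M$,
\[
N_g(x,y,z) \;:=\; k\,d(x,y)^{\alpha} g(z) - \bigl(g(x)-g(y)\bigr) \;\geq\; 0.
\]
The equivalence comes from unfolding $|g|_{\alpha,\delta}/\inf g\le k$ into the two inequalities $\pm(g(x)-g(y)) \le k\,d(x,y)^{\alpha}\inf g$ and then replacing $\inf g$ by $g(z)$ quantified over all $z\in M$; the symmetry $(x,y)\leftrightarrow(y,x)$ in $d(x,y)^{\alpha}$ collapses the two sign choices into the single displayed condition.

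For the second step I substitute $g=\psi-t\varphi$ and use linearity of $N$ in $g$ to obtain $N_g = N_\psi - t\,N_\varphi$. Since $\varphi\in\mathcal{C}^{k}_\delta$, each $N_\varphi(x,y,z)\ge 0$, so the membership condition becomes $t\,N_\varphi(x,y,z)\le N_\psi(x,y,z)$, i.e.\ $t\le N_\psi/N_\varphi$ whenever $N_\varphi>0$. Taking the supremum of admissible $t$'s yields the stated formula
\[
A_k(\varphi,\psi)=\inf_{\substack{d(x,y)<\delta,\ z\in M}}\frac{N_\psi(x,y,z)}{N_\varphi(x,y,z)}.
\]
The symmetric argument applied to $g=s\varphi-\psi$ gives $s\,N_\varphi\ge N_\psi$, hence the supremum formula for $B_k(\varphi,\psi)$, and then $\Theta_k(\varphi,\psi)=\log(B_k/A_k)$ is immediate from the definition.

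For the auxiliary pointwise bounds I would argue that at $t=A_k(\varphi,\psi)$ the function $\psi-A_k\varphi$ lies in the closure $\overline{\mathcal{C}^{k}_\delta}$ and is therefore nonnegative pointwise, yielding $A_k\le \psi(x)/\varphi(x)$ for every $x\in M$; symmetrically $B_k\varphi-\psi\ge 0$ gives $B_k\ge\sup_x \psi(x)/\varphi(x)$.

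The most delicate step is verifying that the H\"older constraint alone already encodes the positivity requirement, so that the supremum of admissible $t$'s is indeed the ratio-infimum rather than the smaller of that and $\inf \psi/\varphi$. I would handle this by fixing $z_0\in M$, introducing the continuous function $h(w):=\psi(z_0)\varphi(w)-\varphi(z_0)\psi(w)$, and choosing $x$ at its minimum on $M$; for any $y$ close enough to $x$ (which exists by compactness and connectedness, in particular with $d(x,y)<\delta$) one has $h(x)\le h(y)$, and this rearranges exactly to
\[
\frac{N_\psi(x,y,z_0)}{N_\varphi(x,y,z_0)}\;\le\;\frac{\psi(z_0)}{\varphi(z_0)}.
\]
Taking the infimum over $z_0$ shows that the ratio-infimum is already bounded above by $\inf_z\psi(z)/\varphi(z)$, so the positivity constraint is automatically dominated by the H\"older constraint and the formula for $A_k$ is correct. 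The case $h\equiv\mathrm{const}$ corresponds to $\psi$ being a positive multiple of $\varphi$, for which $\Theta_k=0$ trivially.
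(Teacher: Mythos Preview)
The paper does not actually prove this lemma; it simply states the result and refers the reader to Castro--Varandas~\cite{Varandas1} for the argument. Your proposal therefore supplies a complete proof where the paper gives none, and the approach you take---rewriting membership in $\mathcal{C}^{k}_{\delta}$ as the family of linear inequalities $N_g(x,y,z)\ge 0$, using linearity to extract $A_k$ and $B_k$ as an infimum and supremum of the ratios $N_\psi/N_\varphi$, and then checking via the auxiliary function $h(w)=\psi(z_0)\varphi(w)-\varphi(z_0)\psi(w)$ that the H\"older constraint already dominates the positivity constraint---is correct and is essentially the standard argument one finds in the cited reference.

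One small remark: your derivation gives $A_k(\varphi,\psi)\le\inf_x \psi(x)/\varphi(x)$ and $B_k(\varphi,\psi)\ge\sup_x \psi(x)/\varphi(x)$, with $\psi$ in the numerator, which is what the definition $A(\varphi,\psi)=\sup\{t:\psi-t\varphi\in\mathcal{C}\}$ forces. The paper's displayed inequalities have the ratio inverted; this appears to be a typographical slip in the statement, and your version is the correct one. (The later application in the proof of Proposition~\ref{h} is unaffected, since there the two functions play symmetric roles and are both normalized.)
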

From the expression of the projective metric in the cone $\mathcal{C}^{k}_{\delta}$ one can prove that its diameter is finite for $k$ large enough, see~\cite{Varandas1}.

\begin{proposition}\label{diamfinito}
	For $0 <\gamma < 1$, the cone $\mathcal{C}^{\gamma k}_{\delta}$ has finite diameter in $\mathcal{C}^{k}_{\delta}$.
\end{proposition}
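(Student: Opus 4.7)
My plan is to exploit the explicit formula for the projective metric $\Theta_k$ on $\mathcal{C}^k_\delta$ given in Lemma~\ref{metrica cone} and bound $B_k(\varphi,\psi)/A_k(\varphi,\psi)$ uniformly for every $\varphi,\psi\in\mathcal{C}^{\gamma k}_\delta$. The key observation is that the cone condition on $\mathcal{C}^{\gamma k}_\delta$ forces the oscillation of any of its elements on a $\delta$-ball to be controlled by $\gamma k d(x,y)^{\alpha}\inf\varphi$, which is strictly smaller (by a factor $\gamma<1$) than the term $k d(x,y)^{\alpha}\inf\varphi$ appearing in the denominator of the quotient defining $\Theta_k$. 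This gap is what produces a uniform bound on $B_k/A_k$.

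The first step is to record that, by \eqref{supcone}, every $\varphi\in\mathcal{C}^{\gamma k}_\delta$ satisfies $\sup\varphi\leq M\inf\varphi$ where $M:=1+m(\mathrm{diam}\,M)^{\alpha}\gamma k$, and analogously for $\psi$. Next, for any choice of $x,y\in M$ with $d(x,y)<\delta$ and any $z\in M$, I bound the numerator
\[
N(\psi):=k|x-y|^{\alpha}\psi(z)-(\psi(x)-\psi(y))
\]
above and below. Using $\inf\psi\leq\psi(z)\leq\sup\psi\leq M\inf\psi$ together with $|\psi(x)-\psi(y)|\leq\gamma k|x-y|^{\alpha}\inf\psi$, I obtain
\[
(1-\gamma)\,k|x-y|^{\alpha}\inf\psi\;\leq\;N(\psi)\;\leq\;(M+\gamma)\,k|x-y|^{\alpha}\inf\psi,
\]
and the same inequalities hold for the denominator $N(\varphi)$ with $\varphi$ in place of $\psi$. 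Dividing, the factor $k|x-y|^{\alpha}$ cancels and I get
\[
A_k(\varphi,\psi)\;\geq\;\frac{1-\gamma}{M+\gamma}\cdot\frac{\inf\psi}{\inf\varphi}
\quad\text{and}\quad
B_k(\varphi,\psi)\;\leq\;\frac{M+\gamma}{1-\gamma}\cdot\frac{\inf\psi}{\inf\varphi},
\]
so that
\[
\Theta_k(\varphi,\psi)=\log\!\frac{B_k(\varphi,\psi)}{A_k(\varphi,\psi)}\;\leq\;2\log\!\frac{M+\gamma}{1-\gamma},
\]
a bound depending only on $\gamma$, $k$, $\alpha$, $m$ and $\mathrm{diam}\,M$, hence independent of $\varphi,\psi\in\mathcal{C}^{\gamma k}_\delta$. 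This yields $\mathrm{diam}_{\Theta_k}(\mathcal{C}^{\gamma k}_\delta)<\infty$.

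The only delicate point is the positivity of the denominator $N(\varphi)$: one must check that $k|x-y|^{\alpha}\varphi(z)-(\varphi(x)-\varphi(y))>0$ for every admissible triple $(x,y,z)$, so that the quotient in Lemma~\ref{metrica cone} is actually a positive real number and the formula for $\Theta_k$ makes sense on $\mathcal{C}^{\gamma k}_\delta$. This is immediate from the lower bound $N(\varphi)\geq(1-\gamma)k|x-y|^{\alpha}\inf\varphi>0$, which uses precisely the strict inequality $\gamma<1$ and the fact that elements of $\mathcal{C}^{\gamma k}_\delta$ are strictly positive; this is the one place in the argument where one really needs to sit inside the smaller cone rather than in $\mathcal{C}^{k}_\delta$. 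Once this is settled, the rest is the routine manipulation of the two-sided estimates described above.
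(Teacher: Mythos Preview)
Your argument is correct and is precisely the standard route: the paper does not supply its own proof of this proposition but defers to Castro--Varandas~\cite{Varandas1}, and the computation there proceeds exactly as you do, bounding the numerator and denominator in the explicit expression of Lemma~\ref{metrica cone} using the gap $\gamma<1$ and the sup/inf comparison~\eqref{supcone}. Your treatment of the positivity of $N(\varphi)$ and the cancellation of the factor $k|x-y|^{\alpha}$ is the essential point, and you handle it cleanly.
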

%\begin{proof}
%	
%	For $\varphi, \psi \in \mathcal{C}^{\gamma k}_{\delta} $ we apply the Lemma~\ref{metrica cone} and the estimate in (\ref{supcone}) to obtain
%	\begin{eqnarray*}
%		\Theta_k(\varphi, \psi)&\leq&\log \left( \frac{k \cdot \sup\varphi + \gamma \cdot k \cdot \inf\varphi}{k \cdot \inf\varphi - \gamma \cdot k \cdot \inf\varphi } \cdot \frac{k \cdot \sup\psi + \gamma \cdot k \cdot \inf\psi}{k \cdot \inf\psi - \gamma \cdot k \cdot \inf\psi}  \right)\\
%		&\leq& \log \left( \frac{k(1+ m \cdot \gamma \cdot k \left[\mbox{diam} (\mathcal{Q} )\right]^{\alpha})(1+ \gamma ) \inf \varphi}{k(1-\gamma) \inf\varphi}  \right)  \\
%		&+& \log \left( \frac{k(1+ m \cdot \gamma \cdot k \left[\mbox{diam} (\mathcal{Q} )\right]^{\alpha})(1+ \gamma ) \inf \psi}{k(1-\gamma) \inf\psi}  \right) \\
%		&\leq&  2\log\left( \frac{1+ \gamma }{1- \gamma }  \right) +  2\log\left( 1+ m\cdot \gamma \cdot k  \left[\mbox{diam} (\mathcal{Q} )\right]^{\alpha} \right).
%	\end{eqnarray*}
%	which implies that the diameter of $\mathcal{C}^{\gamma k}_{\delta}$ is finite for $k$ large enough.
%\end{proof}	
		
	\section{Reference Measure} \label{medreferencia}
For $w\in X$, let  $f_w:M\rightarrow M$ be the fiber dynamics and $\phi_w:M\rightarrow \mathbb{R}$ be the potential. Consider $\mathcal{L}_{w}:C^0(M)\rightarrow C^0(M)$ the transfer operator	associated to $(f_w, \phi_w)$ defined by
$$\mathcal{L}_{w}(\psi)(x)=\sum_{y\in f_w^{-1}(x)}e^{\phi_w(y)}\psi(y).$$
Consider also its dual operator $\mathcal{L}_{w}^{\ast}:[C^0(M)]^\ast\to[C^0(M)]^\ast $ which satisfies
$$\int \psi\, d\mathcal{L}_{w}^{\ast}(\rho_{\theta(w)})=\int\mathcal{L}_{w}(\psi)\, d\rho_{\theta(w)}.$$

We say that a probability measure $\nu_w\in \mathcal{M}^1(M)$ is a \emph{reference measure} associated to $\lambda_w\in\mathbb{R}$ if $\nu_w$ satisfies 
$$\mathcal{L}_{w}^{\ast}(\nu_{\theta(w)})=\lambda_w\nu_w.$$

As in the deterministic case, by applying the Schauder-Tychonoff fixed point theorem, it is straightforward to prove the existence of a system of reference measures $\{\nu_w\}_{w\in X}$ where $\nu_w$ is associated to $\lambda_w$ given by \begin{equation}\label{lambda}\lambda_{w}=\mathcal{L}^{\ast}_{w}\nu_{\theta(w)}(1)=\nu_{\theta(w)}(\mathcal{L}_{w}(1))
\end{equation}
for $\mathbb{P}$-almost every $w\in X.$ See \cite{Urbanski2} for details. In the sequel we derive some properties of the reference measure.

The \emph{jacobian} of a measure $\eta$ with respect to $f$ is a measurable function $J_{\eta}f$ s.t. $$\eta(f(A))=\int_{A}J_{\eta}f d\eta$$
for any measurable set $A$ where $f|_{A}$ is injective.

\begin{lemma}
	\label{lemmajacobiano}
	The jacobian of $\nu_w$ with respect to $f_w$ is given by $J_{\nu_w}f_w = \lambda_{w}e^{-\phi_w}$. Moreover $\nu_w$ is an open measure. In particular, supp$(\nu_w)=M$.
\end{lemma}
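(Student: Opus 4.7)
The plan is to first establish the jacobian formula by testing the eigenmeasure identity against continuous functions supported on injectivity neighborhoods, and then to deduce the openness of $\nu_w$ by iterating the jacobian relation along the topological exactness hypothesis (III).

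For the jacobian, fix a Borel set $A \subset M$ contained in an open set on which $f_w$ is injective. For any continuous $\psi$ with $\operatorname{supp}(\psi) \subset A$, injectivity collapses the transfer operator applied to $\psi\, e^{-\phi_w}$:
\begin{equation*}
\mathcal{L}_w(\psi\, e^{-\phi_w})(x) = \sum_{y \in f_w^{-1}(x)} \psi(y) = \psi\bigl((f_w|_A)^{-1}(x)\bigr)\cdot \mathbf{1}_{f_w(A)}(x),
\end{equation*}
since the exponential weights cancel and at most one preimage of $x$ lies in $A$. Integrating against $\nu_{\theta(w)}$ and invoking $\mathcal{L}_w^{\ast}\nu_{\theta(w)} = \lambda_w \nu_w$ gives
\begin{equation*}
\lambda_w \int_A \psi\, e^{-\phi_w}\, d\nu_w = \int_{f_w(A)} \psi \circ (f_w|_A)^{-1}\, d\nu_{\theta(w)} = \int_A \psi\, d\bigl((f_w|_A)^{\ast}\nu_{\theta(w)}\bigr),
\end{equation*}
the last equality being the change of variables $x = f_w(y)$. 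A standard density argument over continuous $\psi$ compactly supported in the interior of $A$, together with outer regularity of Borel measures on $M$, promotes the identity to characteristic functions: $\nu_{\theta(w)}(f_w(E)) = \int_E \lambda_w e^{-\phi_w}\, d\nu_w$ for every Borel $E \subset A$, which is precisely $J_{\nu_w} f_w = \lambda_w e^{-\phi_w}$.

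For the openness, iterating this identity yields $J_{\nu_w} f_w^n = \lambda_w^{(n)} e^{-S_n\phi(w,\cdot)}$, with $\lambda_w^{(n)} := \prod_{j=0}^{n-1}\lambda_{\theta^j(w)}$, on every set where $f_w^n$ is injective. Let $U \subset M$ be open and nonempty and choose a ball $B(x_0,\varepsilon) \subset U$. Hypothesis (III), applied with $j = 0$, supplies an integer $\tilde n$ with $f_w^{\tilde n}(B(x_0,\varepsilon)) = M$. Since $f_w^{\tilde n}$ is a local homeomorphism of degree at most $\deg(F)^{\tilde n}$, we may partition $B(x_0,\varepsilon)$ into finitely many Borel pieces $A_1,\dots,A_N$ on each of which $f_w^{\tilde n}$ is injective, so that $\bigcup_i f_w^{\tilde n}(A_i) = M$. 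Applying the iterated jacobian and using the uniform boundedness of $\phi$ and $\lambda$ along the finite orbit segment,
\begin{equation*}
1 = \nu_{\theta^{\tilde n}(w)}(M) \leq \sum_{i=1}^{N} \nu_{\theta^{\tilde n}(w)}(f_w^{\tilde n}(A_i)) = \int_{B(x_0,\varepsilon)} \lambda_w^{(\tilde n)} e^{-S_{\tilde n}\phi(w,\cdot)}\, d\nu_w \leq C\,\nu_w(B(x_0,\varepsilon)),
\end{equation*}
for a finite constant $C = C(w,\tilde n)$. Hence $\nu_w(U) \geq \nu_w(B(x_0,\varepsilon)) \geq C^{-1} > 0$, so $\nu_w$ is an open measure and, in particular, has full topological support.

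I expect the main technical nuisance to be the approximation step that extends the identity from continuous test functions to Borel characteristic functions while respecting the injectivity constraint: one must choose approximants supported strictly inside an open injectivity neighborhood of $A$ and then pass to the limit using outer regularity. The other ingredients—the collapse of $\mathcal{L}_w$ under injectivity, the finite-to-one structure of $f_w^n$ inherited from $\deg(F) < \infty$, and the uniform topological exactness furnished by (III)—are directly available from the setting of Section~\ref{statements}.
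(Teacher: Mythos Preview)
Your argument is correct and follows essentially the same route as the paper's proof: both compute the jacobian by testing the eigenmeasure relation $\mathcal{L}_w^*\nu_{\theta(w)}=\lambda_w\nu_w$ against continuous approximants of characteristic functions (the paper phrases this as a sequence $\zeta_n\to\mathcal{X}_A$ rather than your density/regularity language), and both deduce openness by iterating the jacobian formula together with the topological exactness hypothesis~(III) and a finite partition into injectivity pieces. The only cosmetic difference is that the paper argues openness by contradiction (assuming $\nu_w(U)=0$ forces $\nu_{\theta^{\tilde n}(w)}(M)=0$), whereas you extract a quantitative lower bound $\nu_w(B(x_0,\varepsilon))\geq C^{-1}$; your version is in fact slightly more informative and matches what the paper later records in Remark~\ref{K}.
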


\begin{proof}
	Let $A\subset M$ be a measurable set such that $f_{w}|_{A}$ is injective.  Take a bounded sequence $\{\zeta_n \}\in C^0(M)$ such that $\zeta_{n}\to \mathcal{X}_{A}$. Then
	\begin{eqnarray*}
		\int_{M} \lambda_{w}e^{-\phi_w}\zeta_{n} d\nu_{w}&=&\int_{M} e^{-\phi_w}\zeta_{n} d(\mathcal{L}^{\ast}_{w}\nu_{\theta(w)})=\int_{M} \mathcal{L}_{w}(e^{-\phi_w}\zeta_{n})(y) d\nu_{\theta(w)}(y)\\
		&=& \int_{M} \sum_{f_{w}(z)=y}\zeta_{n}(z)d\nu_{\theta(w)}(y)=\int_{M} \sum_{f_{w}(z)=y} \zeta_{n}(f^{-1}_{w}(y))d\nu_{\theta(w)}(y).
	\end{eqnarray*}
	Since $\int_{M} \sum_{f_{w}(z)=y} \zeta_{n}(f^{-1}_{w}(y))d\nu_{\theta(w)}(y)  \longrightarrow \int_{M} \mathcal{X}_{A}(f^{-1}_{w}(y))d\nu_{\theta(w)}(y)$ when $n\to\infty$
	and $\int_{M} \mathcal{X}_{A}(f^{-1}_{w}(y))d\nu_{\theta(w)}(y)=\int_{M}\mathcal{X}_{f_{w}(A)} d\nu_{\theta(w)}=\nu_{\theta(w)}(f_{w}(A)).$
	We conclude that
	$$	\nu_{\theta(w)}(f_{w}(A))=\int_{A} \lambda_{w}e^{-\phi_w} d\nu_{w}.$$
	Notice that, by induction, we have for every $n\in\mathbb{N}$ that	
	\begin{equation}
	\label{equaintlocal}
	\nu_{\theta^{n}(w)}(f^{n}_{w}(A))=\int_{A}\lambda^{n}_{w}e^{-S_{n}\phi_{w}} \ d\nu_{w},
	\end{equation}
	where $\lambda^{n}_{w}= \lambda_{\theta^{n-1}(w)} \lambda_{\theta^{n-2}(w)}\cdot \cdot \cdot \lambda_{\theta(w)}\lambda_{w}$.
	
	Now we prove that $\nu_w$ is an open measure. By contradiction suppose the existence of some non-empty open set $U_w\subset M$ such that $\nu_{w}(U_w)=0$. By the exactness assumption, we can take  $\tilde{n}\in \mathbb{N}$ such that $f^{\tilde{n}}_{w}(U_w)=M$. Partitioning $U_w$ into mensurable subsets $U_{w,1},...,U_{w,k}$ where $f_{w}^{\tilde{n}}|_{U_{w,j}}$ is injective for $j=1,...,k$ we have
	$$\nu_{\theta^{\tilde{n}}(w)}(M)\leq\sum^{k}_{j=1}\nu_{\theta^{\tilde{n}}(w)}\bigl( f^{\tilde{n}}_{w}(U_{w,j})\bigr)=\sum^{k}_{j=1}\int_{U_w,j}J_{\nu_w}f^{\tilde{n}}_{w} d\nu_{w}=0$$
	which is a contradiction. This completes the proof.
\end{proof}

In the next proposition we show that the family $\{\nu_w\}_w$ satisfies a Gibbs property at hyperbolic times.

\begin{proposition} \label{conforme} Let $n$ be a hyperbolic time for $(w, x)$. For every $0<\varepsilon\leq\delta$ there exist $K_\varepsilon(w)>0$ and $0<\gamma_\varepsilon(\theta^n(w))\leq 1$ such that for all $y\in B_{w}(x,n, \varepsilon)$ holds
	$$\gamma_\varepsilon(\theta^n(w))K_{\varepsilon}(w)^{-1}\leq{\displaystyle{\frac{\nu_w(B_{w}(x,n, \varepsilon))}{\displaystyle{\exp\left(S_{n}\phi_w(y)-\log\lambda_{w}^{n}\right)}}}}\leq K_{\varepsilon}(w)$$
	where $S_{n}\phi_w(y)=\sum^{n-1}_{j=0}\phi_{\theta^{j}(w)}( f^{j}_{w}(y))$ and $\lambda_{w}^{n}=\lambda_w\lambda_{\theta(w)}\cdots\lambda_{\theta^{n-1}(w)}.$
\end{proposition}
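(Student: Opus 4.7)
The plan is to combine the change-of-variables formula (\ref{equaintlocal}) with a bounded distortion estimate for the Birkhoff sum along hyperbolic orbits. Since $n$ is a $c$-hyperbolic time for $(w,x)$ and $\varepsilon \leq \delta$, Lemma~\ref{lemacontra} guarantees that $f_w^n$ restricted to $B_w(x, n, \varepsilon)$ is a homeomorphism onto $B_{\theta^n(w)}(f_w^n(x), \varepsilon)$. Applying (\ref{equaintlocal}) with $A = B_w(x, n, \varepsilon)$, I obtain
$$\nu_{\theta^n(w)}\!\left(B_{\theta^n(w)}(f_w^n(x),\varepsilon)\right) = \lambda_w^n \int_{B_w(x,n,\varepsilon)} e^{-S_n\phi_w(z)}\, d\nu_w(z),$$
and it then remains to control the variation of $S_n\phi_w$ over the ball.

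For the bounded distortion, I would first extract from condition (\ref{cond1}) the uniform H\"older bound $|\phi_{w'}|_\alpha \leq \varepsilon_\phi$ for every $w'\in X$; this follows from the elementary inequality $|\log a - \log b| \leq (\min(a,b))^{-1}|a-b|$ applied to $a = e^{\phi_{w'}(x)}$ and $b = e^{\phi_{w'}(y)}$. Next, the backward contraction in Lemma~\ref{lemacontra}, applied separately to $y$ and to $z$ with base point $x$ and combined via the triangle inequality, yields $d(f_w^{n-k}(y), f_w^{n-k}(z)) \leq 2\varepsilon\, e^{-ck/2}$ for all $y,z\in B_w(x,n,\varepsilon)$ and $1\leq k\leq n$. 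Summing the H\"older increments of $\phi_{\theta^j(w)}$ along the orbit then produces
$$|S_n\phi_w(y) - S_n\phi_w(z)| \leq \varepsilon_\phi\,(2\varepsilon)^\alpha\sum_{k=1}^{\infty} e^{-ck\alpha/2} =: D_\varepsilon < \infty,$$
a finite bound independent of $n$, $w$ and $x$.

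Using the distortion bound to factor $e^{-S_n\phi_w(y)}$ out of the integral, for an arbitrary $y \in B_w(x,n,\varepsilon)$, gives
$$e^{-D_\varepsilon}\,\nu_{\theta^n(w)}(B_{\theta^n(w)}(f_w^n(x),\varepsilon)) \leq \frac{\nu_w(B_w(x,n,\varepsilon))}{\exp\!\left(S_n\phi_w(y) - \log\lambda_w^n\right)} \leq e^{D_\varepsilon}\,\nu_{\theta^n(w)}(B_{\theta^n(w)}(f_w^n(x),\varepsilon)).$$
Setting $K_\varepsilon(w) := e^{D_\varepsilon}$ and using $\nu_{\theta^n(w)}(\,\cdot\,) \leq 1$ delivers the upper bound. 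For the lower bound I would invoke the topological exactness (III) for $w' = \theta^n(w)$: there exists $\tilde{n} = \tilde{n}(w',\varepsilon)$ with $f_{w'}^{\tilde{n}}(B(z,\varepsilon)) = M$ for every $z \in M$. Partitioning $B(z,\varepsilon)$ into at most $\deg(F)^{\tilde{n}}$ injectivity domains for $f_{w'}^{\tilde{n}}$ and applying (\ref{equaintlocal}) to each piece yields
$$1 = \nu_{\theta^{\tilde{n}}(w')}(M) \leq \deg(F)^{\tilde{n}}\,\lambda_{w'}^{\tilde{n}}\,\max_{B(z,\varepsilon)} e^{-S_{\tilde{n}}\phi_{w'}}\cdot \nu_{w'}(B(z,\varepsilon)),$$
so one may set $\gamma_\varepsilon(w') := \deg(F)^{-\tilde{n}}\,\lambda_{w'}^{-\tilde{n}}\,\min_{M} e^{S_{\tilde{n}}\phi_{w'}} > 0$, which is independent of $z$ and supplies the required uniform lower bound.

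The main obstacle I anticipate is the careful index bookkeeping in the distortion estimate: one must compose the backward contraction of Lemma~\ref{lemacontra} along the full orbit and verify that the resulting geometric series is summable uniformly in $n$. Deriving the uniform H\"older control $|\phi_{w'}|_\alpha \leq \varepsilon_\phi$ from the exponential form of (\ref{cond1}) is a small but essential step that is easy to overlook, and ensures that the constant $D_\varepsilon$ depends only on $\varepsilon$ and the parameters $\varepsilon_\phi, c, \alpha$.
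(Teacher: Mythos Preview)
Your argument is correct and follows essentially the same route as the paper: change of variables via (\ref{equaintlocal}), bounded distortion of $S_n\phi_w$ on the hyperbolic dynamical ball, and positivity of $\nu_{\theta^n(w)}$ on $\varepsilon$-balls. Two small differences are worth noting. First, you extract from (\ref{cond1}) the uniform bound $|\phi_{w'}|_\alpha\leq\varepsilon_\phi$, which makes your distortion constant $D_\varepsilon$ independent of $w$; the paper instead keeps the $w$-dependent sum $\varepsilon\sum_{k\geq 0}|\phi_{\theta^k(w)}|_\alpha e^{-ck/2}$ and calls this $K_\varepsilon(w)$. Your version is slightly sharper and your bookkeeping with the $\alpha$-power and the triangle inequality through $x$ is cleaner. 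Second, your explicit construction of $\gamma_\varepsilon(\theta^n(w))$ via topological exactness is exactly what the paper does, but the paper places it in the subsequent Remark~\ref{K} rather than inside the proof of the proposition; within the proof itself the paper simply takes $\gamma_\varepsilon(\theta^n(w)):=\nu_{\theta^n(w)}(B_{\theta^n(w)}(f_w^n(x),\varepsilon))$ and observes it is positive because $\nu_{\theta^n(w)}$ is an open measure (Lemma~\ref{lemmajacobiano}).
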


\begin{proof} Fix $0<\varepsilon\leq\delta$. From Lemma~\ref{lemacontra} and condition~(\ref{cond1}) we get 
	\begin{eqnarray*}
		|S_{n}\phi_w(z)-S_{n}\phi_w(y)|&\leq& \sum_{k=0}^{n-1}|\phi_{\theta^{n-k}(w)}(f_w^{n-k}(z))-\phi_{\theta^{n-k}(w)}(f_w^{n-k}(y))|\\
		&\leq& \sum_{k=0}^{n-1} |\phi_{\theta^{n-k}(w)}|_{\alpha}e^{-ck/2}d(f_w^{n}(z), f_w^{n}(y))\\
		&\leq& \varepsilon \sum_{k=0}^{\infty} |\phi_{\theta^{k}(w)}|_{\alpha}e^{-ck/2}\leq K_\varepsilon(w)
	\end{eqnarray*}
	for every $z, y\in B_{w}(x,n, \varepsilon).$ By applying once again Lemma~\ref{lemacontra} we know that $f_w^{n}$ maps homeomorphically $B_{w}(x,n, \varepsilon)$ into the ball $B_{\theta^{n}(w)}(f_w^{n}(x),\varepsilon)$. Hence since the jacobian of $\nu_w$ is bounded away from zero and infinity we can write 
	$$0<\gamma_\varepsilon(\theta^n(w))\leq\nu_{\theta^n(w)}(f_w^{n}(B_{w}(x,n, \varepsilon)))=\int_{B_{w}(x,n, \varepsilon)}{\lambda_w^{n}e^{-S_{n}\phi_w(z)}}d\nu_w\leq1$$ 
	where $\gamma_\varepsilon(\theta^n(w))$ depends only on the radius $\varepsilon$ of the ball $B_{\theta^{n}(w)}(f_w^{n}(x),\varepsilon)$. Therefore for every $y\in B_{w}(x,n, \varepsilon)$ follows that
	\begin{eqnarray*}
		\gamma_{\varepsilon}(\theta^n(w))\leq\int_{B_{w}(x,n, \varepsilon)}\!{\lambda_w^{n}e^{-S_{n}\phi_w(z)}}d\nu_w\!\!\!\!
		&=&\!\!\!\!\displaystyle\int_{B_{w}(x,n, \varepsilon)}{\!\!\lambda_w^{n}e^{-S_{n}\phi_w(y)}\left(\frac{\lambda_w^{n}e^{-S_{n}\phi_w(z)}}{\lambda_w^{n}e^{-S_{n}\phi_w(y)}}\right)}d\nu_w\\ \\ 
		&\leq& K_{\varepsilon}(w)e^{-S_{n}\phi_w(y)+\log\lambda^n_w}\nu_w(B_{w}(x,n, \varepsilon)).
	\end{eqnarray*}
	Applying the same argument we have that
	$$e^{-S_{n}\phi_w(y)+\log\lambda^n_w}\nu_w(B_{w}(x,n, \varepsilon))\leq K_{\varepsilon}(w)\int_{B_{w}(x,n, \varepsilon)}{\!\!\lambda_w^{n}e^{-S_{n}\phi_w(y)}\left(\frac{\lambda_w^{n}e^{-S_{n}\phi_w(z)}}{\lambda_w^{n}e^{-S_{n}\phi_w(y)}}\right)}d\nu_w$$
	which completes the proof.
\end{proof}

\begin{remark} \label{K} It is possible to obtain a lower bound for $\gamma_\varepsilon(\theta^n(w))$.  Indeed, from hypothesis we may find $\tilde{n}=\tilde{n}(w, \varepsilon)$ such that
	$f_{\theta^n(w)}^{\tilde{n}}(B_{\theta^n(w)}(f_w^n(x),\varepsilon))=M$ and by definition of jacobian follows that  
	\begin{eqnarray*}
		1&=&\nu_{\theta^{n+\tilde{n}}(w)}(f_{\theta^n(w)}^{\tilde{n}}(B_{\theta^n(w)}(f_w^n(x),\varepsilon)))\\&\leq&\displaystyle \int_{B_{\theta^n(w)}(f_w^n(x),\varepsilon)} \lambda^{\tilde{n}}_{\theta^n(w)} e^{-S_{\tilde{n}}\phi_{\theta^n(w)}}\ d\nu_{\theta^{n}(w)}\\
		&\leq& \lambda^{\tilde{n}}_{\theta^n(w)}e^{-\tilde{n}\inf \phi_{\theta^n(w)}}\nu_{\theta^{n}(w)}(B_{\theta^n(w)}(f_w^n(x),\varepsilon)).
	\end{eqnarray*}
	Thus $ e^{\tilde{n}\inf\phi_{\theta^n(w)}-\log\lambda^{\tilde{n}}_{\theta^n(w)}}\leq\gamma_{\varepsilon}(\theta^n(w)).$ Since $\tilde{n}$ depends only on $w\in X$ and $\varepsilon>0$ we conclude that $\gamma_{\varepsilon}(\theta^n(w))$ is uniformly bounded.
\end{remark}

Consider $c>0$ given by condition~(\ref{cond2}). Given $w\in X$, let $H_w\subset M$ be the subset of $M$ such that $(w, x)$ has infinitely many hyperbolic times, i.e.,
$$H_w:=\left\{x\in M\ \ ;\ \ \limsup_{n\to+\infty} \frac{1}{n}\sum^{n-1}_{j=0}\log L_{\theta^{j}(w)}(f^{j}_{w}(x))^{-1}\leqslant -2c<0\right\}.$$
Next we prove that $\nu_w(H_w)=1$ for $\mathbb{P}$-almost every $w\in M.$

Recall that we fix $\varepsilon_\phi>0$ small satisfying $\varepsilon_\phi<\inf_w(\log (deg f_w) - \log q_w)$. In view of (\ref{cond1}) we may find $0<\varepsilon_0 <\varepsilon_\phi$ such that 
\begin{equation}
\label{equationvarepsilon2}
\sup \phi_w - \inf \phi_w + \varepsilon_0 < \log (deg f_w) - \log q_w  \quad \text{for all} \quad w\in X.
\end{equation}

Let $\mathcal{P}$ be a partition of $M$  with cardinality $\# \mathcal{P} = k$. We suppose without loss of generality that the set $\mathcal{A}_w$ is contained in the first $q_w$ elements of $\mathcal{P}$ for all $w\in X$. Consider the numbers $$\bar{p}_w = k - q_w, \,\,\,\hat{q} = \sup_{w \in X} q_{w},\,\,\, \bar{q}= \inf_{w\in X} q_{w}\,\,\,  \mbox{and}\,\,\, \hat{p} = \sup_{w\in X} \bar{p}_{w}.$$
This numbers are well defined since we assume that $\deg(F)= \sup_w \deg(f_w) < \infty$.

For $\rho \in (0,1)$ and $n\in\mathbb{N}$ let $I(\rho, n)$ be the set of itinerates 
$$I(\rho, n)\!=\!\{(i_w,..., i_{\theta^{n-1}(w)})\in \{1,...,k \}^{n};\# \{0 \leq j \leq n-1 : i_{\theta^{j}(w)}\leq q_{\theta^{j}(w)} \} > \rho n \}$$
and consider
$$C_\rho := \limsup_{n}\dfrac{1}{n} \log \# I(\rho, n).$$

\begin{lemma}[\cite{VarandasViana}, Lemma 3.1]
	\label{lemma1}
	Given $\varepsilon > 0$ there exists $\rho_0 \in (0,1)$ such that $C_\rho < \log \hat{q} + \varepsilon$ for every $\rho \in (\rho_0 , 1)$. 
\end{lemma}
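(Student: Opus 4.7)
The plan is to bound $\#I(\rho,n)$ by stratifying the sequences according to the exact number of positions lying in the ``expanding'' range. For each $\ell$ with $\rho n < \ell \leq n$, the set of sequences $(i_w,\ldots,i_{\theta^{n-1}(w)})\in\{1,\ldots,k\}^n$ having exactly $\ell$ indices $j$ with $i_{\theta^j(w)}\leq q_{\theta^j(w)}$ has cardinality at most
$$\binom{n}{\ell}\,\hat q^{\,\ell}\,\hat p^{\,n-\ell},$$
because one first chooses which $\ell$ of the $n$ positions are ``expanding'' (there are $\binom{n}{\ell}$ such choices), then at each expanding position $j$ we have at most $q_{\theta^j(w)}\leq \hat q$ symbols available, and at each contracting position at most $\bar p_{\theta^j(w)}\leq \hat p$ symbols.

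The next step is to apply the standard entropy bound $\binom{n}{\ell}\leq e^{n H(\ell/n)}$, where $H(s)=-s\log s-(1-s)\log(1-s)$, and to sum over $\ell$. Writing $s=\ell/n$ and using that there are at most $n+1$ summands, I obtain
$$\tfrac{1}{n}\log\#I(\rho,n)\;\leq\;\max_{s\in[\rho,1]}\bigl[\,H(s)+s\log\hat q+(1-s)\log\hat p\,\bigr]+\tfrac{1}{n}\log(n+1).$$
Let $g(s):=H(s)+s\log\hat q+(1-s)\log\hat p$. A direct computation gives $g'(s)=\log\bigl((1-s)\hat q/(s\hat p)\bigr)$, so $g$ is strictly decreasing for $s>\hat q/(\hat p+\hat q)$. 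In particular, once $\rho$ is close enough to $1$ (concretely, $\rho>\hat q/(\hat p+\hat q)$), the maximum over $[\rho,1]$ is attained at the left endpoint $s=\rho$, yielding
$$C_\rho\;\leq\;g(\rho)\;=\;H(\rho)+\rho\log\hat q+(1-\rho)\log\hat p.$$

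Finally, I would take the limit $\rho\to 1^-$. Since $H(\rho)\to 0$, $(1-\rho)\log\hat p\to 0$, and $\rho\log\hat q\to\log\hat q$, we conclude $g(\rho)\to\log\hat q$. Hence, given $\varepsilon>0$, there exists $\rho_0\in\bigl(\hat q/(\hat p+\hat q),1\bigr)$ such that $g(\rho)<\log\hat q+\varepsilon$ for every $\rho\in(\rho_0,1)$, completing the proof. The only delicate point is the combinatorial/entropy estimate on the binomial coefficient; once this is in hand, the asymptotic analysis of $g$ is straightforward, and the remaining work is only bookkeeping to absorb the $o(1)$ term $\tfrac{1}{n}\log(n+1)$ into the $\limsup$.
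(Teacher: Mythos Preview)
Your proof is correct and follows essentially the same route as the paper: both bound $\#I(\rho,n)$ by $\sum_{\ell>\rho n}\binom{n}{\ell}\hat q^{\,\ell}\hat p^{\,n-\ell}$, estimate the binomial coefficients asymptotically (the paper invokes Stirling directly, you use the equivalent entropy bound $\binom{n}{\ell}\le e^{nH(\ell/n)}$), and then let $\rho\to 1$. Your treatment is slightly sharper---you keep track of the exact function $g(\rho)=H(\rho)+\rho\log\hat q+(1-\rho)\log\hat p$ and locate its monotonicity threshold---whereas the paper absorbs everything into a cruder bound $C_1 e^{2t(1-\rho)n}\hat q^{\,n}\hat p^{\,(1-\rho)n}$; but the underlying idea and the combinatorial decomposition are identical.
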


\begin{proof}Notice that
	$ \# I(\rho, n) \leq \sum_{k = [\rho n]}^n \binom{n}{k} q_w q_{\theta(w)}\cdot \cdot \cdot q_{\theta^{k-1}(w)}p_w p_{\theta(w)}\cdot \cdot \cdot p_{\theta^{n -(k-1)}(w)}.
	$
	By applying Stirling's formula we have
	\[
	\sum_{k=[\rho n]}^{n} \binom{n}{k} = \dfrac{n}{2}\binom{n}{[\rho n]} \leq C_1 \exp{(2t(1 - \rho)n)}, \quad \mbox{for} \quad \rho > \dfrac{1}{2}.
	\]
	Thus there exist $C_1$ and $t > 0$ such that $
	\# I(\rho, n) \leq C_1 \exp{(2t(1 - \rho)n)} \ \hat{q}^n \ \hat{p}^{(1-\rho)n}.
	$
	Taking the limit when $n$ goes to infinity we have $$ C_\rho = \limsup_n \frac{1}{n}\log \# I(\rho, n) \leq \log \hat{q} + \varepsilon
	$$
	for any $\rho$ close enough to 1.
\end{proof}

From this lemma we can fix of $\rho < 1$ such that $$C_\rho < \log \hat{q} + \dfrac{\varepsilon_0}{4}.$$ 
Recalling the definition of $\lambda_w$ in ($\ref{lambda}$) and the equation (\ref{equationvarepsilon2}) we have that
\begin{align*}
\lambda_w \geq \deg f_w e^{\inf \phi_w}\geq e^{\log (\deg f_w) + \sup \phi_w - \log (\deg f_w) + \log q_w + \varepsilon_0}  = e^{(\log q_w + \sup \phi_w + \varepsilon_0)}.
\end{align*}
Now, using Lemma~\ref{lemmajacobiano} we obtain that
\begin{equation}
\label{equadejacobiano1}
J_{\nu_w}f_w = \lambda_w e^{-\phi_w} \geq e^{(\sup \phi_w +\log q_w + \varepsilon_0 - \phi_w)} \geq e^{\log q_w + \varepsilon_0} > q_w .
\end{equation}

\begin{proposition} \label{expanding} We have $\nu_w(H_w)=1$ for a.e. $w\in X$.
\end{proposition}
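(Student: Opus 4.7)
The approach is a Borel--Cantelli argument on the sequence of cylinders whose itineraries spend too much time in the non-expanding region. Fix $\rho\in(0,1)$ close enough to $1$ so that condition~(\ref{cond2}) gives $\rho\log\tilde L_w-(1-\rho)\log\tilde\sigma_w<-2c$, and also so that Lemma~\ref{lemma1} applies with the bound $C_\rho<\log\hat q+\varepsilon_0/4$. Let $\mathcal A'_w$ denote the union of the first $q_w$ pieces of the partition $\mathcal P$ of Lemma~\ref{lemma1} (which covers $\mathcal A_w$), and set
\[
E_n(w):=\Bigl\{x\in M:\ \#\bigl\{0\leq j<n:\,f^j_w(x)\in\mathcal A'_{\theta^j(w)}\bigr\}>\rho n\Bigr\}.
\]

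The first step is a geometric reduction: if $x\notin E_n(w)$ then at most $\rho n$ iterates of $x$ lie in the bad region, while at least $(1-\rho)n$ lie in $\mathcal A_{\theta^j(w)}^c$. Using $L_{\theta^j(w)}\leq\tilde L_w$ on the bad region and $L_{\theta^j(w)}<\tilde\sigma_w^{-1}$ on its complement, one obtains $\tfrac{1}{n}\sum_{j<n}\log L_{\theta^j(w)}(f^j_w(x))\leq\rho\log\tilde L_w-(1-\rho)\log\tilde\sigma_w<-2c$. Hence any $x\in M\setminus H_w$ must belong to $E_n(w)$ for infinitely many $n$, so $M\setminus H_w\subset\limsup_{n}E_n(w)$.

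The second step is the measure bound. Decompose $E_n(w)=\bigcup_{\underline i\in I(\rho,n)}P(\underline i,w)$ into cylinders of length $n$. Since $\mathcal P$ refines the injectivity cover $\mathcal U_w$, the map $f^n_w$ is a homeomorphism on each $P(\underline i,w)$; applying the iterated Jacobian formula~(\ref{equaintlocal}) together with the pointwise estimate $J_{\nu_w}f_w\geq q_w e^{\varepsilon_0}$ from~(\ref{equadejacobiano1}) yields
\[
\nu_w\bigl(P(\underline i,w)\bigr)\ \leq\ e^{-n\varepsilon_0}\prod_{j=0}^{n-1}q_{\theta^j(w)}^{-1}.
\]
Summing over $\underline i\in I(\rho,n)$ and grouping by the set $J\subset\{0,\dots,n-1\}$ of bad positions of size $k=|J|>\rho n$, the number of itineraries with bad positions exactly $J$ is bounded by $\prod_{j\in J}q_{\theta^j(w)}\cdot\prod_{j\notin J}\bar p_{\theta^j(w)}$. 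The factor $\prod_{j\in J}q_{\theta^j(w)}$ cancels against the matching Jacobian terms, leaving
\[
\nu_w(E_n(w))\ \leq\ e^{-n\varepsilon_0}\sum_{k\leq(1-\rho)n}\binom{n}{k}(\hat p/\bar q)^{\,k}.
\]
By Stirling's formula (as in Lemma~\ref{lemma1}), the right-hand side is bounded by $e^{-n\varepsilon_0}\exp\bigl(n[H(1-\rho)+(1-\rho)\log(\hat p/\bar q)]\bigr)$ where $H$ is the binary entropy. Choosing $\rho$ sufficiently close to $1$ (which still respects the requirement from Step~1), both $H(1-\rho)$ and $(1-\rho)\log(\hat p/\bar q)$ fall below $\varepsilon_0/2$, giving an exponential bound $\nu_w(E_n(w))\leq e^{-n\varepsilon_0/2}$ for all large $n$.

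Finally, the summability $\sum_{n}\nu_w(E_n(w))<\infty$ and the Borel--Cantelli lemma yield $\nu_w(\limsup_nE_n(w))=0$, which by the first step gives $\nu_w(H_w)=1$ for $\mathbb P$-almost every $w$. The main technical obstacle is Step~2: coordinating the choice of $\rho$ to simultaneously satisfy the geometric bound coming from~(\ref{cond2}) and the combinatorial bound required for the sum over $I(\rho,n)$ to be dominated by the Jacobian contraction; this is exactly the point where the smallness of $\varepsilon_\phi$ in~(\ref{cond1}) (hence of $\varepsilon_0$) enters in conjunction with the $\rho$-proximity to $1$ provided by Lemma~\ref{lemma1}.
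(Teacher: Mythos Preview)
Your proof is correct and follows the same Borel--Cantelli strategy as the paper: define the set $E_n(w)$ (the paper calls it $B_w(n)$) of points with too-frequent visits to the non-expanding region, bound its $\nu_w$-measure exponentially via the Jacobian estimate~(\ref{equadejacobiano1}) and the itinerary count from Lemma~\ref{lemma1}, apply Borel--Cantelli, and conclude using condition~(\ref{cond2}). Your Step~2 is in fact slightly sharper than the paper's version: by grouping itineraries according to the exact bad-position set $J$ and cancelling $\prod_{j\in J}q_{\theta^j(w)}$ against the corresponding Jacobian factors, you obtain the bound $e^{-n\varepsilon_0}\sum_{m\leq(1-\rho)n}\binom{n}{m}(\hat p/\bar q)^m$, which dispenses with the paper's extra assumption $\hat q<\bar q\,e^{\varepsilon_0/2}$ (invoked there without prior justification).
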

\begin{proof}
	Given $n\in\mathbb{N}$ denote by $B_w(n)$ the set of points $x\in M$ whose frequency of visits to $\{\mathcal{A}_{\theta^{j}(w)}\}_{0\leq j \leq n-1}$ up to time $n$ is at last $\rho$, that means, 
	\[
	B_w(n) =\left\{x\in M | \ \dfrac{1}{n} \# \{0 \leq j \leq n-1 : \ f^{j}_{w}(x) \in A_{\theta^j (w)}  \} \geq \rho  \right\}.
	\]
	Let $\mathcal{P}^{(n)}$ be the partition $\bigvee_{j=0}^{n-1}(f_w^j)^{-1}\mathcal{P}$. We cover $B_w(n)$ by elements of $\mathcal{P}^{(n)}$ and since $f^n_w$ is injective on every $P\in \mathcal{P}^{(n)}$, we may use (\ref{equadejacobiano1}) to obtain
	\begin{align*}
	1 \geq \nu_{\theta^n(w)}(f^n_w (P)) &= \int_P J_{\nu_w}(f^n_w) \ d\nu_w = \int_P \prod_{j=0}^{n-1}J_{\nu_{\theta^{j}(w)}}f_{\theta^j (w)} \ d\nu_w\\
	& \geq \prod_{j=0}^{n-1} e^{(\log q_{\theta^j(w)} + \varepsilon_0)}\nu_w (P) \geq e^{(\log \bar{q} + \varepsilon_0)n}\nu_w (P).
	\end{align*}
	Thus	$$
	\nu_w (P) \leq e^{-(\log \bar{q} + \varepsilon_0)n}.
	$$
	Since we can assume that $\hat{q}_w< \bar{q}_we^{\varepsilon_0 /2}$ for every $w\in X$ we have  
	\begin{align*}
	\nu_w (B_w(n)) & \leq \# I(\rho,n) e^{-(\log \bar{q}_w + \varepsilon_0)n} \\
	& \leq e^{(\log \hat{q}_w + \varepsilon_0/4)n}e^{-(\log \bar{q}_w + \varepsilon_0)n}
	\leq e^{(\log \hat{q}_w/\bar{q}_w - \varepsilon_0/2)n}.
	\end{align*}
	Hence the measure $\nu_w( B_w(n))$ decreases exponentially fast when $n$ goes to infinity. 
	Applying the Borel-Cantelli lemma we conclude that $\nu_w$-almost every point belongs to $B_w(n)$ for at most finitely many values of $n$. Then, in view of our choice (\ref{cond2}) we obtain for $n$ large enough that 	\[		\sum_{j=0}^{n-1}\log L_{\theta^j(w)}(f^{j}_{w}(x)) \leq \rho \log \tilde{L}_w + (1-\rho)\log \tilde{\sigma}^{-1}_w \leq -2c < 0
	\]
	which proves that $\nu_w$-almost every point has infinitely many hyperbolic times.
\end{proof}
Notice that from the last proposition and recalling that $\nu_w$ is an open measure we conclude that $H_w$ is dense in $M$.

	\section{Transfer Operator}	\label{optransf}
	Here we prove Theorem~\ref{formalismo} and Theorem~\ref{decaimento}. We use the projective metric approach to show that the transfer operator is a contraction in some cone of locally H\"older continuous functions. This contraction implies the existence of the invariant family $\{h_w\}_w$ uniformly bounded away from zero and infinity. Recalling the reference measure $\nu_w$ constructed in previous section, we define the probability measure $\mu_w:=h_w\nu_w$. From the exponential approximation of functions in the cone to the family $\{h_w\}_w$ we derive that $\mu_w$ has an exponential decay of correlations.

	\subsection{Invariant family}
For the construction of the invariant family $\{h_w\}_w$ we follow the ideas of Castro and Varandas~\cite{Varandas1}.

Recall that we fix $\delta>0$ and consider for each $k>0$  the cone of locally H\"older continuous functions 
\begin{equation} \label{cone holder}
\mathcal{C}^{k}_{\delta}(w)= \left\{ \varphi_{w}:M\to \mathbb{R} : \varphi_{w}>0 \ \mbox{and} \  \frac{|\varphi_{w}|_{\alpha,\delta}}{\inf \varphi_{w}} \leq k \right\}.
\end{equation}
Since the cone does not depend on $w$, we denote this by $ \mathcal{C}^{k}_{\delta}$. The next proposition shows its invariance by the transfer operator.
\begin{proposition} \label{invariancia} For every $w\in X$, there exists $0< \gamma_w < 1$ such that  
	$$ \mathcal{L}_{w}(\mathcal{C}^{k}_{\delta}) \subset  \mathcal{C}^{\gamma_{w}k}_{\delta} \subset \mathcal{C}^{k}_{\delta}$$
	 for some positive constant $k$ large enough.
\end{proposition}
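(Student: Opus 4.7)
The plan is to verify the cone inclusion directly by controlling $|\mathcal{L}_w\varphi|_{\alpha,\delta}/\inf\mathcal{L}_w\varphi$ for arbitrary $\varphi\in\mathcal{C}^{k}_\delta$. Positivity of $\mathcal{L}_w\varphi$ is immediate from positivity of $\varphi$ and $e^{\phi_w}$, and the second inclusion $\mathcal{C}^{\gamma_w k}_\delta\subset\mathcal{C}^{k}_\delta$ follows at once from the monotonicity of the cones combined with $\gamma_w<1$, which is hypothesis~(V). So the real content is the H\"older estimate on $\mathcal{L}_w\varphi$.

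Fix $x,y\in M$ with $d(x,y)<\delta$. Using the local inverse branches supplied by~(I), I pair each preimage $y_i\in f_w^{-1}(x)$ with its companion $z_i=(f_w|_{U_{y_i}})^{-1}(y)$ and write
\[
\mathcal{L}_w\varphi(x)-\mathcal{L}_w\varphi(y)=\sum_i\bigl(e^{\phi_w(y_i)}-e^{\phi_w(z_i)}\bigr)\varphi(y_i)+\sum_i e^{\phi_w(z_i)}\bigl(\varphi(y_i)-\varphi(z_i)\bigr).
\]
By condition~(IV), $|e^{\phi_w(y_i)}-e^{\phi_w(z_i)}|\leq \varepsilon_\phi e^{\inf\phi_w}d(y_i,z_i)^\alpha$. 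Next I partition the indices: by~(II) at most $q_w$ preimages lie in $\mathcal{A}_w$, where $d(y_i,z_i)\leq L_w d(x,y)$, while the remaining $p_w=\deg f_w-q_w$ preimages lie in $\mathcal{A}_w^c$, where by~(I) the branch contracts so that $d(y_i,z_i)\leq\sigma^{-1}d(x,y)<\delta$. In the contracting case $|\varphi(y_i)-\varphi(z_i)|\leq|\varphi|_{\alpha,\delta}\sigma^{-\alpha}d(x,y)^\alpha$ directly. In the other case $d(y_i,z_i)$ can overshoot $\delta$, so I invoke the remark following Lemma~\ref{bolas}: with $r=L_w-1\leq 1$ (guaranteed by~(I), since $L_w$ is close to~$1$), $\varphi$ is H\"older in balls of radius $L_w\delta$ with constant $(1+(L_w-1)^\alpha)|\varphi|_{\alpha,\delta}$, giving $|\varphi(y_i)-\varphi(z_i)|\leq(1+(L_w-1)^\alpha)L_w^\alpha|\varphi|_{\alpha,\delta}d(x,y)^\alpha$.

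Summing, bounding $\inf\mathcal{L}_w\varphi\geq \deg(f_w)e^{\inf\phi_w}\inf\varphi$, and using the cone bounds $|\varphi|_{\alpha,\delta}\leq k\inf\varphi$ and $\sup\varphi\leq(1+m(\mbox{diam}\,M)^\alpha k)\inf\varphi$ from~(\ref{supcone}), the ratio $|\mathcal{L}_w\varphi|_{\alpha,\delta}/(k\inf\mathcal{L}_w\varphi)$ is bounded above by
\[
\frac{e^{\sup\phi_w-\inf\phi_w}\bigl[p_w\sigma^{-\alpha}+q_wL_w^\alpha(1+(L_w-1)^\alpha)\bigr]}{\deg(f_w)}+\varepsilon_\phi L_w^\alpha\bigl[1+m(\mbox{diam}\,M)^\alpha\bigr]+O(1/k).
\]
Controlling $e^{\sup\phi_w-\inf\phi_w}\leq e^{\varepsilon_\phi}$ (which follows from~(IV) after shrinking $\varepsilon_\phi$ if needed, since the H\"older bound on $e^{\phi_w}$ forces the oscillation to be small) and absorbing the $O(1/k)$ term into the slack $\gamma-\gamma_w$ furnished by~(V) by choosing $k$ sufficiently large, the total is at most $\gamma_w$, yielding $|\mathcal{L}_w\varphi|_{\alpha,\delta}\leq\gamma_w k\inf\mathcal{L}_w\varphi$, as required.

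The main technical subtlety is the treatment of the non-contracting preimages in $\mathcal{A}_w$: the displacement $d(y_i,z_i)$ may slightly exceed~$\delta$, so the H\"older estimate on $\delta$-balls cannot be applied verbatim. The correction factor $1+(L_w-1)^\alpha$ that appears in the definition of $\gamma_w$ in~(V) is precisely the constant produced by the post-Lemma~\ref{bolas} estimate, and it is for this reason that~(I) insists on $L_w$ being close to~$1$.
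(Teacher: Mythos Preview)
Your argument is correct and mirrors the paper's proof almost exactly: the same lower bound $\inf\mathcal{L}_w\varphi\ge\deg(f_w)e^{\inf\phi_w}\inf\varphi$, the same add-and-subtract splitting of $e^{\phi_w(y_i)}\varphi(y_i)-e^{\phi_w(z_i)}\varphi(z_i)$, the same separation into $p_w$ contracting and $q_w$ non-contracting branches, and the same appeal to the post-Lemma~\ref{bolas} estimate producing the $(1+(L_w-1)^\alpha)$ factor. The only cosmetic difference is that the paper bounds $1+m(\mathrm{diam}\,M)^\alpha k\le(1+m(\mathrm{diam}\,M)^\alpha)k$ directly for $k\ge1$, landing on $\gamma_w k$ from~(V) without a residual term, whereas you carry an explicit $O(1/k)$ and absorb it via the slack $\gamma-\gamma_w$; both reach the same conclusion for $k$ large.
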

\begin{proof}
	Given $\varphi\in \mathcal{C}^{k}_{\delta}$ we will show that 
	$$\frac{| \mathcal{L}_w(\varphi)|_{\alpha, \delta}}{\inf  \mathcal{L}_w(\varphi)}  \leq \gamma_w k  \quad \mbox{for some} \quad 0<\gamma_w <1.$$
	For each $x\in M$ and $1\leq j\leq \deg (f_w)$, denote by $x_j$ the preimage of $x$ under $f_w$. Observe that for any continuous function $\varphi$ we have 
	
	\begin{equation} \label{infimo}	\mathcal{L}_{w}(\varphi)(x) = {\displaystyle \sum_{j=1}^{{\deg(f_w)}}} e^{\phi_w(x_j)}\varphi(x_j)  
	\geq\deg(f_w)e^{\inf \phi_w} \inf\varphi . 
	\end{equation}
	From definition of $\mathcal{L}_w$ and the constant $|\mathcal{L}_w(\varphi)|_{\alpha, \delta}$ we obtain the following
	$$	\dfrac{|\mathcal{L}_w(\varphi)|_{\alpha, \delta}}{\inf \mathcal{L}_w(\varphi)}=\!\! \sup_{d(x,y)<\delta}\dfrac{|\mathcal{L}_w(\varphi(x)) - \mathcal{L}_w(\varphi(y))|}{\inf \mathcal{L}_w(\varphi) \, d(x,y)^{\alpha}}\leq \sum_{j=1}^{{\deg(f_w)}} \dfrac{\displaystyle \left| e^{\phi_w(x_j)}\varphi(x_j) -e^{\phi_w(y_j)} \varphi(y_j) \right| }{\inf \mathcal{L}_w(\varphi) \, d(x,y)^{\alpha}}$$
	By remark~(\ref{infimo}) the last inequality is less or equal than
	$$ \sum_{j=1}^{{\deg(f_w)}}\frac{e^{\sup\phi_w} \displaystyle  |\varphi(x_j) - \varphi(y_j)|}{\deg(f_w)e^{\inf \phi_w} \inf\varphi  \, d(x,y)^{\alpha}}+  \sum_{j=1}^{{\deg(f_w)}}  \dfrac{\sup \varphi\displaystyle  \left|e^{\phi_w(x_j)} - e^{\phi_w(y_j)}\right| }{\deg(f_w)e^{\inf \phi_w} \inf\varphi  \, d(x,y)^{\alpha}}.$$
	Recall that we are assuming that every point has $p_w$ preimages in the expanding region. Moreover, $\varphi$ is $((1+ (L_w-1)^\alpha)|\varphi|_{\alpha,\delta}, \alpha)$-H\"older continuous in balls of radius $L_w\delta$ we conclude that the previous sum is bounded from above by 
	\begin{eqnarray*}
		\frac{e^{\sup\phi_w} [p_w\sigma^{-\alpha}+ q_w L_w^{\alpha} (1+ (L_w-1)^\alpha ) ] \  |\varphi|_{\alpha,\delta} d(x,y)^{\alpha}  }{\deg(f_w)e^{\inf \phi_w} \inf\varphi  \, d(x,y)^{\alpha}}
		+  \frac{\sup \varphi|e^{\phi_{w}}|_{\alpha} L_w^\alpha d(x,y)^{\alpha}}{e^{\inf \phi_w} \inf\varphi  \, d(x,y)^{\alpha} }.
	\end{eqnarray*}
	Using equation~(\ref{supcone}), the definition of cone and condition~(\ref{cond1}) follows that the sum above is less or equal than
	$$\left[e^{\varepsilon_\phi}\left[\frac{ p_w\sigma^{-\alpha}+ q_w L_w^{\alpha} (1+ (L_w-1)^\alpha )}{\deg(f_w)}\right] + \varepsilon_\phi L_w^\alpha  \left[ 1+ m (\mbox{\mbox{diam}}M)^{\alpha}\right]\right]k. $$
	By hypotheses, condition~(\ref{condcone}), there exists some positive constant $0<\gamma_w< 1$ such that the previous sum is bounded from above by $\gamma_{w} k$. This finishes the proof.
	\end{proof}

From the last proposition we have the invariance of the cone $\mathcal{C}^{k}_{\delta}.$ Since this cone has finite diameter, according Proposition~\ref{diamfinito}, we can apply Theorem~\ref{contcone} to conclude the next result.	

\begin{proposition} \label{contracaocone}For every $w\in X$ the operator $\mathcal{L}_{w}$ is a contraction in the cone $\mathcal{C}^{k}_{\delta}$, i.e., denoting by $\Delta_w = \mbox{diam}_{\Theta_k}(\mathcal{C}^{\gamma_w k}_{\delta}) >0$ follows that
	$$\Theta_{k} \left(\mathcal{L}_{w}(\varphi), \mathcal{L}_{w}(\psi) \right) \leq (1-e^{-\Delta_w}) \cdot \Theta_k \left( \varphi, \psi \right) \quad \mbox{for all} \ \varphi, \psi \in \mathcal{C}^{k}_{\delta}.$$	
\end{proposition}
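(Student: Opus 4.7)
The plan is to deduce the statement as an immediate corollary of Theorem~\ref{contcone} (Birkhoff's contraction principle for cones), once two ingredients are put into place: invariance of the cone under $\mathcal{L}_w$, and finiteness of the projective diameter of its image. Both ingredients have just been established, so the argument is essentially a single assembly step.

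First I would invoke Proposition~\ref{invariancia}, which provides $0<\gamma_w<1$ such that
$$\mathcal{L}_{w}(\mathcal{C}^{k}_{\delta})\subset \mathcal{C}^{\gamma_w k}_{\delta}\subset \mathcal{C}^{k}_{\delta}.$$
In particular, $\mathcal{L}_w$ can be viewed as a linear operator that maps the closed convex cone $\mathcal{C}^{k}_{\delta}\subset C^0(M)$ into itself, with image actually contained in the smaller cone $\mathcal{C}^{\gamma_w k}_{\delta}$. Since the projective metric depends monotonically on the cone, the diameter of $\mathcal{L}_w(\mathcal{C}^{k}_{\delta})$ in the metric $\Theta_k$ is at most the diameter of $\mathcal{C}^{\gamma_w k}_{\delta}$ in the metric $\Theta_k$.

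Next I would use Proposition~\ref{diamfinito}: since $0<\gamma_w<1$, the sub-cone $\mathcal{C}^{\gamma_w k}_{\delta}$ has finite diameter inside $\mathcal{C}^{k}_{\delta}$, so
$$\Delta_w := \mbox{diam}_{\Theta_k}\bigl(\mathcal{C}^{\gamma_w k}_{\delta}\bigr) < +\infty.$$
This furnishes the second hypothesis of Theorem~\ref{contcone}.

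Finally, applying Theorem~\ref{contcone} with $V_1=V_2=C^0(M)$, $\mathcal{C}_1=\mathcal{C}_2=\mathcal{C}^{k}_{\delta}$ and $L=\mathcal{L}_w$, the conclusion
$$\Theta_k\bigl(\mathcal{L}_w(\varphi),\mathcal{L}_w(\psi)\bigr)\leq (1-e^{-\Delta_w})\,\Theta_k(\varphi,\psi)\quad \text{for all }\varphi,\psi\in \mathcal{C}^{k}_{\delta}$$
follows immediately. There is no genuine obstacle at this point: the substantive work was performed in Proposition~\ref{invariancia} (where conditions~(I)--(V) and~(\ref{cond1}) were used to secure the strict cone contraction $\gamma_w<1$), and in the abstract fact, recorded in Proposition~\ref{diamfinito}, that tightening the ratio $|\varphi|_{\alpha,\delta}/\inf\varphi$ by a factor $\gamma<1$ produces a cone of finite projective diameter. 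The present proposition only packages these two inputs into Birkhoff's theorem.
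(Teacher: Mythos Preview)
Your proposal is correct and follows exactly the same route as the paper: the paper does not even give a separate proof, but simply remarks that invariance of the cone (Proposition~\ref{invariancia}) together with the finite diameter of $\mathcal{C}^{\gamma_w k}_{\delta}$ inside $\mathcal{C}^{k}_{\delta}$ (Proposition~\ref{diamfinito}) allows one to apply Theorem~\ref{contcone}. Your write-up is a faithful expansion of this assembly step.
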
 Since we assume in condition (\ref{condcone}) the existence of $\gamma\in (0,1)$ such that $\gamma_w\leq \gamma $ for all $w\in X$ we conclude that  
$$\Theta_{k} \left(\mathcal{L}_{w}(\varphi), \mathcal{L}_{w}(\psi) \right) \leq (1-e^{-\Delta}) \cdot \Theta_k \left( \varphi, \psi \right) \quad \mbox{for all} \ \varphi, \psi \in \mathcal{C}^{k}_{\delta}\ \mbox{and} \ w\in X$$
where $\Delta= \sup_w (\Delta_w)\leq \mbox{diam}_{\Theta_k}(\mathcal{C}^{\gamma k}_{\delta})$

Let $\{\nu_w\}_w$ be the family of reference measures and $\lambda_w=\nu_{\theta(w)}(\mathcal{L}_{w}(1))$. The contraction in the cone allows us to prove the existence of the family $\{h_w\}_w$ invariant by the transfer operator.

\begin{proposition}\label{h} For almost $w\in X$ there exists a H\"older continuous function $h_{w}:M\to\mathbb{R}$ bounded away from zero and infinity satisfying 
	$\mathcal{L}_{w}h_{w}=\lambda_{w} h_{\theta(w)}.$
\end{proposition}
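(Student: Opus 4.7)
The plan is to construct $h_w$ as the limit of a sequence of normalized transfer-operator iterates obtained by pulling back along $\theta$, using the uniform projective-metric contraction of Proposition~\ref{contracaocone}. Since $\theta$ is invertible, for $\mathbb{P}$-a.e. $w\in X$ and each $n\geq 1$, set
\[
L^{(n)}_{\theta^{-n}(w)}:=\mathcal{L}_{\theta^{-1}(w)}\circ\mathcal{L}_{\theta^{-2}(w)}\circ\cdots\circ\mathcal{L}_{\theta^{-n}(w)},\qquad \lambda^{(n)}_{\theta^{-n}(w)}:=\prod_{j=1}^{n}\lambda_{\theta^{-j}(w)},
\]
and define
\[
h_w^{(n)}\,:=\,\bigl(\lambda^{(n)}_{\theta^{-n}(w)}\bigr)^{-1}\,L^{(n)}_{\theta^{-n}(w)}(\mathbf{1}).
\]
A short bookkeeping computation using $\lambda^{(n+1)}_{\theta^{-n}(w)}/\lambda^{(n)}_{\theta^{-n}(w)}=\lambda_w$ shows
\begin{equation*}
\mathcal{L}_w h_w^{(n)}=\lambda_w\, h_{\theta(w)}^{(n+1)},
\end{equation*}
so that any pointwise limit $h_w=\lim_n h_w^{(n)}$ will automatically satisfy the cohomological equation $\mathcal{L}_w h_w=\lambda_w h_{\theta(w)}$.

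First I would observe that $\mathbf{1}\in \mathcal{C}^{k}_{\delta}$ for every $k>0$, so by Proposition~\ref{invariancia} each $h_w^{(n)}$ lies in $\mathcal{C}^{\gamma k}_{\delta}\subset \mathcal{C}^{k}_{\delta}$. Next, using the dual identity $(L^{(n)}_{\theta^{-n}(w)})^{\ast}\nu_w=\lambda^{(n)}_{\theta^{-n}(w)}\nu_{\theta^{-n}(w)}$ (which iterates $\mathcal{L}_w^{\ast}\nu_{\theta(w)}=\lambda_w\nu_w$), I get $\int h_w^{(n)}\,d\nu_w=1$ for every $n$. Hence the sequence is automatically normalized with respect to the reference measure.

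To obtain convergence, I would apply the uniform projective contraction of Proposition~\ref{contracaocone}. Writing
\[
h_w^{(n+m)}=\bigl(\lambda^{(n)}_{\theta^{-n}(w)}\bigr)^{-1}L^{(n)}_{\theta^{-n}(w)}\bigl(h_{\theta^{-n}(w)}^{(m)}\bigr),\qquad h_w^{(n)}=\bigl(\lambda^{(n)}_{\theta^{-n}(w)}\bigr)^{-1}L^{(n)}_{\theta^{-n}(w)}(\mathbf{1}),
\]
and using that $\Theta_k$ is invariant under positive rescaling together with the fact that each factor $\mathcal{L}_{\theta^{-j}(w)}$ is a contraction with rate at most $(1-e^{-\Delta})$, I obtain
\[
\Theta_k\!\bigl(h_w^{(n+m)},h_w^{(n)}\bigr)\le (1-e^{-\Delta})^{n}\,\Theta_k\!\bigl(h_{\theta^{-n}(w)}^{(m)},\mathbf{1}\bigr)\le (1-e^{-\Delta})^{n}\,\mathrm{diam}_{\Theta_k}\!\bigl(\mathcal{C}^{\gamma k}_{\delta}\bigr).
\]
Since the diameter is finite (Proposition~\ref{diamfinito}), $(h_w^{(n)})_n$ is Cauchy in the projective metric. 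Combining the normalization $\int h_w^{(n)}\,d\nu_w=1$ with the standard comparison between $\Theta_k$ and the sup norm on normalized cone elements (essentially $\|\varphi-\psi\|_\infty\le C(e^{\Theta_k(\varphi,\psi)}-1)$ when $\nu_w(\varphi)=\nu_w(\psi)$), I conclude the sequence is Cauchy in $C^0(M)$, and hence converges uniformly to some $h_w$.

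Finally, since the cone $\mathcal{C}^{\gamma k}_{\delta}$ is closed in the uniform topology, the limit $h_w$ still belongs to it, is therefore H\"older continuous by Lemma~\ref{bolas}, satisfies $\int h_w\,d\nu_w=1$, and by (\ref{supcone}) obeys $\sup h_w\le \inf h_w\cdot(1+m(\mathrm{diam}\,M)^\alpha\gamma k)$, so it is bounded away from $0$ and $\infty$ (the lower bound being forced by the normalization and the uniform upper bound on $\sup h_w/\inf h_w$). Continuity of $\mathcal{L}_w$ in the uniform topology then promotes the relation $\mathcal{L}_w h_w^{(n)}=\lambda_w h_{\theta(w)}^{(n+1)}$ to $\mathcal{L}_w h_w=\lambda_w h_{\theta(w)}$.

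The principal obstacle I expect is the measurable-in-$w$ bookkeeping: the construction involves the pulled-back cocycle $\lambda^{(n)}_{\theta^{-n}(w)}$ and the family of reference measures $\{\nu_w\}_{w\in X}$, and one must ensure that the limit $h_w$ is defined on a $\theta$-invariant full-$\mathbb{P}$-measure set so that the identity $\mathcal{L}_w h_w=\lambda_w h_{\theta(w)}$ really holds almost surely. This is handled by noting that each $h_w^{(n)}$ depends measurably on $w$, that the contraction rate $(1-e^{-\Delta})$ is deterministic (thanks to the uniform bound $\gamma_w\le\gamma<1$ in condition~(\ref{condcone})), and that $\lambda_w$ and $\{\nu_w\}$ are already measurable by the construction in Section~\ref{medreferencia}; the remaining point is to intersect the relevant full-measure sets and apply $\theta$-invariance of $\mathbb{P}$.
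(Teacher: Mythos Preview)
Your proposal is correct and follows essentially the same route as the paper: both define the sequence $h_w^{(n)}=(\lambda^{(n)}_{\theta^{-n}(w)})^{-1}L^{(n)}_{\theta^{-n}(w)}(\mathbf{1})$, use the normalization $\int h_w^{(n)}\,d\nu_w=1$ together with cone membership and the uniform projective contraction to show the sequence is Cauchy in $C^0$, and take the limit. The only notable difference is in the last step: you pass the identity $\mathcal{L}_w h_w^{(n)}=\lambda_w h_{\theta(w)}^{(n+1)}$ directly to the limit, whereas the paper instead introduces the Ces\`aro averages $\tilde\varphi_{n,w}=\frac{1}{n}\sum_{j=0}^{n-1}h_w^{(j)}$ and computes $\hat{\mathcal L}_w(\tilde\varphi_{n,w})=\tilde\varphi_{n,\theta(w)}+\frac{1}{n}\bigl(\hat{\mathcal L}^n_{\theta^{-n}(\theta(w))}(\mathbf{1})-\mathbf{1}\bigr)$ to obtain the cohomological equation; your argument is slightly more direct here. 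Your closing remarks on measurability anticipate what the paper treats separately in Subsection~\ref{mensu}.
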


\begin{proof}
	Consider $\hat{\mathcal{L}}_w:=\lambda^{-1}_w\mathcal{L}_w$ the normalized operator and define the sequence $(\varphi_n)_n$ by $\varphi_n:=\hat{\mathcal{L}}^{n}_{\theta^{-n}(w)}(\mathbf{1})$ where 
	$$		\hat{\mathcal{L}}^{n}_{\theta^{-n}(w)}:= \hat{\mathcal{L}}_{\theta^{-1}(w)}\circ \hat{\mathcal{L}}_{\theta^{-2}(w)}\circ \cdots \circ \hat{\mathcal{L}}_{\theta^{-(n-1)}(w)}\circ \hat{\mathcal{L}}_{\theta^{-n}(w)}
	$$
	for each $n\geq 0$.  By definition of conformal measure we have 
	%	and consider %Consider the set \marginpar{\color{blue}Estranho. Melhorar.}
	%	\begin{equation*}
	%	\label{equasetA}
	%	\mathcal{A}=  \left\{\varphi \in \mathcal{C}^{k}_{\delta}(X\times M; \mathbb{R}) \ : \ \int \varphi_{w} d \nu_{w}=1 \ \mbox{for almost} \ w \in X \right\}.
	%	\end{equation*}
	%	Since $ 1\in \mathcal{A}$ we have that $\mathcal{A}$ is a non empty set. 
	%	Fix $\varphi \in \mathcal{A}$ and 
	$$
	\int \varphi_n\ d\nu_{w} = \int \hat{\mathcal{L}}^{n}_{\theta^{-n}(w)}(\mathbf{1})\ d\nu_{w}
	=\int\mathbf{1}\ d(\hat{\mathcal{L}}^{\ast})^{n}_{\theta^{-n}(w)} \nu_w 
	= \int \mathbf{1}\ d \nu_{\theta^{-n}(w)}=1.
	$$
	Hence each term $\varphi_n$	satisfies $\sup \varphi_n\geq 1$ and $\inf \varphi_n\leq 1$. Since $\mathbf{1}\in \mathcal{C}^{k}_{\delta}$ and $\mathcal{C}^{k}_{\delta}$ is invariant, follows that $\varphi_n\in \mathcal{C}^{k}_{\delta} $ and so, applying inequality~(\ref{supcone}), we obtain that the sequence $(\varphi_n)_n$ is uniformly bounded away from zero and infinity by
	$$\frac{1}{R}\leq\inf\varphi_n \leq 1\leq \sup \varphi_n \leq R. $$
	where $R=(1 + mk \ diam(M)^\alpha)$.
	Moreover, as $\varphi_n$ is $C$-H\"older continuous in balls of radius $\delta$, by Lemma~\ref{bolas} we obtain that $\varphi_n$ is a $Cm$-H\"older continuous function. 
	
	Next we prove that $(\varphi_n)_n$ is a Cauchy sequence in the $C^0$-norm. From Proposition~\ref{contracaocone}, for every $m, l\geq n$ the projective metric satisfies 
	$$\Theta_{k}(\varphi_m, \varphi_l)=\Theta_{k}(\hat{\mathcal{L}}^{m}_{\theta^{-m}(w)}(\mathbf{1}), \hat{\mathcal{L}}^{l}_{\theta^{-l}(w)}(\mathbf{1})) \leq \Delta \tau^n \quad \mbox{where}\quad \tau:=  1 - e^{-\Delta}.  $$	 
%Indeed,		
%	\begin{align*} 
%	&\Theta_k \left(\hat{\mathcal{L}}^{n}_{\theta^{-n}(w)}(\mathbf{1}),\hat{\mathcal{L}}^{n + l}_{\theta^{-(n+l)}(w)}(\mathbf{1})\right) \\
%	%& =\Theta_k  (\hat{\mathcal{L}}_{\theta^{-1}(w)}\circ \cdots \circ \hat{\mathcal{L}}_{\theta^{-n}(w)}(\varphi_{\theta^{-n}(w)}), \hat{\mathcal{L}}_{\theta^{-1}(w)}\circ \cdots \circ \hat{\mathcal{L}}_{\theta^{-n}(w)}\circ \cdots \circ \hat{\mathcal{L}}_{\theta^{-(n+l)}(w)}(\varphi_{\theta^{-(n+l)}(w)}))\\
%	& \leq \prod_{i=1}^{n}(1 - e^{-\Delta_{\theta^{-i}(w)}})\, \Theta_k \left(\mathbf{1}, \hat{\mathcal{L}}_{\theta^{-(n)}(w)}\circ \cdots \circ \hat{\mathcal{L}}_{\theta^{-(n+l)}(w)}(\mathbf{1})\right)\\
%	&\leq \prod_{i=1}^{n}(1 - e^{-\Delta}) \ diam_{\Theta_k}(\mathcal{C}^{k}_\delta)
%	\leq \Delta \tau^n
%	\end{align*}
	Recalling the expression of the projective metric $\Theta_{k}(\varphi_m, \varphi_l)= \log\left(\frac{B_k(\varphi_m, \varphi_l)}{A_k(\varphi_m, \varphi_l)}\right)$ 
	we apply  Lemma~\ref{metrica cone} to obtain
	\begin{eqnarray*} \label{cota sup}
		e^{-\Delta \tau^{n}}\leq A_{k}(\varphi_m, \varphi_l)\leq\inf \frac{\varphi_m}{\varphi_l}
		\leq1\leq \sup \frac{\varphi_m}{\varphi_l} \leq B_{k}(\varphi_m, \varphi_l) \leq e^{\Delta \tau^n}.
	\end{eqnarray*}
	Thus for all $m, l\geq n$, we have
	$$\left	\|\varphi_m - \varphi_l\right \|_{\infty} \leq \left\|\varphi_l \right\|_{\infty} \left\|\frac{\varphi_m}{\varphi_l} - 1 \right\|_{\infty} \leq R (e^{\Delta \tau^n}-1) \leq \tilde{R} \tau^{n}
	$$	
	which proves that $(\varphi_n)_n$ is a Cauchy sequence. Hence $(\varphi_n)_n$ converges uniformly to a function $h_w:M\to \mathbb{R}$ in the cone $\mathcal{C}^{k}_\delta$ satisfying $\int h_w d\nu_w =1$ for a.e. $w\in X$. In particular, this function is H\"older continuous and uniformly bounded away from zero and infinity. To complete the proof of the proposition, we are going to show that $\mathcal{L}_{w}h_{w}=\lambda_{w} h_{\theta(w)}.$ Consider the sequence $$\tilde{\varphi}_{n, w}:=\frac{1}{n}\sum_{j=0}^{n-1}\varphi_j=\frac{1}{n}\sum_{j=0}^{n-1}\hat{\mathcal{L}}^{j}_{\theta^{-j}(w)}(\mathbf{1}).$$
	By what we have proved above $(\tilde{\varphi}_{n, w})$ converges uniformly to $h_w$ for almost every $w\in X.$ From the continuity of $\mathcal{L}_w$ we obtain
	\begin{eqnarray*}
		\hat{\mathcal{L}}_w(h_w)=\lim_{n\to +\infty}\hat{\mathcal{L}}_w(\tilde{\varphi}_{n, w})&=&\lim_{n\to +\infty}\frac{1}{n}\sum_{j=0}^{n-1}\hat{\mathcal{L}}_w(\hat{\mathcal{L}}^{j}_{\theta^{-j}(w)}(\mathbf{1}))\\
		&=&\lim_{n\to +\infty}\frac{1}{n}\sum_{j=0}^{n-1}\hat{\mathcal{L}}^{j}_{\theta^{-j}(\theta(w))}(\mathbf{1})+\frac{1}{n}(\hat{\mathcal{L}}^n_{\theta^{-n}(\theta(w))}(\mathbf{1})-\mathbf{1})
	\end{eqnarray*}
	Since $\hat{\mathcal{L}}^n_{\theta^{-n}(\theta(w))}(\mathbf{1})$ is uniformly bounded we conclude that $\hat{\mathcal{L}}_w(h_w)=h_{\theta(w)}.$
\end{proof}

From the proof of the last proposition we conclude that the family $\{h_w\}_{w\in X}$ is uniquely determined. Moreover, every $h_w$ satisfies
$$\frac{1}{R}\leq\inf h_w \leq 1\leq \sup h_w \leq R. $$
where  $R=(1 + mk \ diam(M)^\alpha)$.
	
	\subsection{Mensurability} \label{mensu} 
In the last subsection we have proved the existence of a family $\{h_w\}_w$ invariant under the action of the transfer operator. Here we prove that this family is measurable as well as the family $\{\nu_w\}_w$. Moreover, defining the probability measure $\mu_w:=h_w\nu_w$ we also prove that $\mu_w$ has an exponential decay of correlations and that the family $\{\mu_w\}_w$ is $F$-invariant.

The next proposition states an exponential approximation of functions in the cone to the invariant family $\{h_w\}_w$. This is the main ingredient in the proof of the exponential decay of correlations.

\begin{proposition} \label{cota norma}
	For almost $w\in X$ there exist constants $K>0$ and $0<\tau <1$ such that for every $\varphi\in \mathcal{C}^{k}_{\delta}$ satisfying $\int \varphi \ d\nu_w = 1$ we have that
	$$\left\| \mathcal{\hat{L}}_w^{n}(\varphi) - \mathcal{\hat{L}}_w^{n}(h_w) \right\|_{\infty} \leq K\tau^n\,,   \quad \mbox{for all}\quad n\geq 1,$$ 
	where $\mathcal{\hat{L}}_w=\lambda_{w}^{-1}\mathcal{L}_w$ is the normalized operator.
\end{proposition}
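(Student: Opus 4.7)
The plan is to convert the projective contraction bound for $\hat{\mathcal{L}}_w^n$ into a uniform bound, mirroring the technique used at the end of the proof of Proposition~\ref{h}. The key point is that both $\varphi$ and $h_w$ sit inside the invariant cone $\mathcal{C}^{k}_\delta$, so after one application of the normalised operator, both images lie in the sub-cone $\mathcal{C}^{\gamma k}_\delta$, which has finite diameter $\Delta$ in $\mathcal{C}^{k}_\delta$ by Proposition~\ref{diamfinito}. Applying the uniform contraction estimate from Proposition~\ref{contracaocone} $n-1$ more times (using that the contraction factor $\tau = 1 - e^{-\Delta}$ is independent of $w$ by condition~(\ref{condcone})), I would obtain
\begin{equation*}
\Theta_k\bigl(\hat{\mathcal{L}}_w^n\varphi,\;\hat{\mathcal{L}}_w^n h_w\bigr)\leq \Delta\,\tau^{\,n-1}.
\end{equation*}

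Next I would convert this projective bound into a $C^0$ bound. By Lemma~\ref{metrica cone}, if I let $r_n := \hat{\mathcal{L}}_w^n\varphi / \hat{\mathcal{L}}_w^n h_w$, then
\begin{equation*}
\frac{\sup r_n}{\inf r_n}\;\leq\;\frac{B_k(\hat{\mathcal{L}}_w^n\varphi,\hat{\mathcal{L}}_w^n h_w)}{A_k(\hat{\mathcal{L}}_w^n\varphi,\hat{\mathcal{L}}_w^n h_w)}\;\leq\; e^{\Delta\tau^{n-1}}.
\end{equation*}
To pin the ratio $r_n$ near $1$, I would use the duality $\hat{\mathcal{L}}_w^{*}\nu_{\theta(w)}=\nu_w$ (a direct consequence of the identity $\mathcal{L}_w^{*}\nu_{\theta(w)}=\lambda_w\nu_w$). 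Iterating, this gives $\int \hat{\mathcal{L}}_w^n\varphi\,d\nu_{\theta^n(w)}=\int\varphi\,d\nu_w=1$ and, since $\int h_w\,d\nu_w=1$ from Proposition~\ref{h}, also $\int \hat{\mathcal{L}}_w^n h_w\,d\nu_{\theta^n(w)}=1$. Therefore $\int r_n\cdot \hat{\mathcal{L}}_w^n h_w\,d\nu_{\theta^n(w)}=1$, which forces $\inf r_n\leq 1\leq \sup r_n$.

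Combining the last two facts gives $\sup r_n \leq e^{\Delta\tau^{n-1}}$ and $\inf r_n\geq e^{-\Delta\tau^{n-1}}$, whence
\begin{equation*}
\|r_n-1\|_\infty\;\leq\; e^{\Delta\tau^{n-1}}-1\;\leq\; \Delta\, e^{\Delta}\,\tau^{n-1}.
\end{equation*}
Since $\hat{\mathcal{L}}_w^n h_w=h_{\theta^n(w)}$ is uniformly bounded by the constant $R=1+mk\,\mathrm{diam}(M)^\alpha$ from the proof of Proposition~\ref{h}, I conclude
\begin{equation*}
\bigl\|\hat{\mathcal{L}}_w^n\varphi-\hat{\mathcal{L}}_w^n h_w\bigr\|_\infty = \bigl\|(r_n-1)\,\hat{\mathcal{L}}_w^n h_w\bigr\|_\infty \leq R\Delta e^{\Delta}\tau^{n-1} \leq K\tau^{n},
\end{equation*}
after absorbing constants into $K$.

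The main obstacle is the third step: the projective-metric contraction alone only gives control of the ratio $r_n$ up to a multiplicative constant, so some normalisation is required to fix the "height" of $\hat{\mathcal{L}}_w^n\varphi$ relative to $\hat{\mathcal{L}}_w^n h_w$. The hypothesis $\int \varphi\,d\nu_w=1$ together with the duality property of the reference family is precisely what supplies this normalisation; without it, one could only bound oscillations rather than the sup-norm difference itself.
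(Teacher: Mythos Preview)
Your proposal is correct and follows essentially the same approach as the paper: both use the duality $\hat{\mathcal{L}}_w^{*}\nu_{\theta(w)}=\nu_w$ together with the normalisation $\int h_w\,d\nu_w=1$ to pin $\inf r_n\leq 1\leq\sup r_n$, and then convert the projective contraction bound into the sup-norm estimate via Lemma~\ref{metrica cone} and the uniform bound $\|h_{\theta^n(w)}\|_\infty\leq R$. Your write-up is in fact slightly more careful about the exponent (you obtain $\Delta\tau^{n-1}$ rather than the paper's $\Delta\tau^{n}$), but this only affects the constant $K$.
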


\begin{proof}
	Given $\varphi \in \mathcal{C}^{k}_{\delta}$ satisfying $\int \varphi \ d\nu_w = 1$ we have for every $n \geq 1$
	$$ \int \mathcal{\hat{L}}_w^{n}(\varphi) \ d\nu_{\theta^{n}(w)} = \int \varphi \ d(\mathcal{L}^{\ast}_{w})^{n}(\nu_{\theta^{n}(w)}) = \int \varphi\ d\nu_w =1 .$$
	Since $h_w \in \mathcal{C}^{k}_{\delta}$ also satisfies $\int h_w \ d\nu_w = 1$, we derive for every $n\geq 1$ that 
	$$\inf\frac{\mathcal{\hat{L}}_w^{n}(\varphi)}{\mathcal{\hat{L}}_w^{n}(h_w)} \leq 1\leq \sup \frac{\mathcal{\hat{L}}_w^{n}(\varphi)}{\mathcal{\hat{L}}_w^{n}(h_w)}.$$		
	Recalling that $\mathcal{\hat{L}}^n_{w}(h_w)=h_{\theta^{n}(w)}$ for all $n\geq 1$, we apply the same argument of projective metric used in the proof of Proposition~\ref{h} to obtain
%	\begin{eqnarray*} \label{cota sup}
%		e^{-\Delta \tau^{n}}\!\!\leq A_{k}(\mathcal{\hat{L}}^n_{w}(\varphi_w), \mathcal{\hat{L}}^n_{w}(h_w))\!\!\!&\leq&\!\!\!\inf\frac{\mathcal{\hat{L}}_w^{n}(\varphi_w)}{\mathcal{\hat{L}}_w^{n}(h_w)}\\
%		\!\!\!&\leq&\!\!\! 1\\
%		\!\!\!&\leq&\!\!\! \sup \frac{\mathcal{\hat{L}}_w^{n}(\varphi_w)}{\mathcal{\hat{L}}_w^{n}(h_w)} \leq B_{k}(\mathcal{\hat{L}}^n_{w}(\varphi_w), \mathcal{\hat{L}}^n_{w}(h_w)) \leq e^{\Delta \tau^{n}}.
%	\end{eqnarray*}
	$$\left\| \mathcal{\hat{L}}_w^{n}(\varphi) - \mathcal{\hat{L}}_w^{n}(h_w) \right\|_{\infty} \leq \left\|h_{\theta^{n}(w)} \right\|_{\infty} \left\| \frac{\mathcal{\hat{L}}_w^{n}(\varphi)}{\mathcal{\hat{L}}_w^{n}(h_w)} -1 \right\|_{\infty} \leq R \left(e^{\Delta\tau^n} -1 \right) \leq K\tau^n. $$
\end{proof}

Let $\mu_w$ be the probability measure defined by $\mu_w:=h_w\nu_w.$ From the last proposition we derive the proof of Theorem~\ref{decaimento}.

\begin{thB}
	\label{decaimento 2}
	For a.e. $w\in X,$ the probability measure $\mu_{w}$ has exponential decay of correlations for H\"older continuous observables: There exists $0<\tau< 1$ such that for any $\varphi\in L^1(\mu_{\theta^{n}(w)})$ and $\psi \in C^{\alpha}(M) $ there exists a positive constant $K(\varphi, \psi) $ satisfying for all $n\geq 1$ that
	$$\left|\int \left(\varphi \circ f^{n}_{w}\right) \psi \ d\mu_{w} - \int \varphi \ d\mu_{\theta^{n}(w)} \int \psi\ d\mu_{w}  \right| \leq K(\varphi, \psi) \tau^n.$$

\end{thB}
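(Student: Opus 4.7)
The plan is to convert the correlation integral into an action of the normalized transfer operator, and then to exploit the exponential approximation given by Proposition~\ref{cota norma}. The starting point is the standard duality identity. Since $\mu_w=h_w\nu_w$ and $\hat{\mathcal{L}}_w^{\ast}\nu_{\theta(w)}=\nu_w$, and since for any continuous $g$ one has $\hat{\mathcal{L}}_w(g\cdot(\varphi\circ f_w))(x)=\varphi(x)\hat{\mathcal{L}}_w(g)(x)$, a straightforward induction yields
\[
\int(\varphi\circ f^{n}_w)\psi\,d\mu_w=\int(\varphi\circ f^{n}_w)(\psi h_w)\,d\nu_w=\int\varphi\cdot\hat{\mathcal{L}}^{n}_w(\psi h_w)\,d\nu_{\theta^{n}(w)}.
\]
Writing $c:=\int\psi h_w\,d\nu_w=\int\psi\,d\mu_w$ and using $\hat{\mathcal{L}}^{n}_w(h_w)=h_{\theta^{n}(w)}$, the correlation of interest becomes
\[
\int\varphi\cdot\bigl[\hat{\mathcal{L}}^{n}_w(\psi h_w)-c\,h_{\theta^{n}(w)}\bigr]\,d\nu_{\theta^{n}(w)}.
\]

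Next I decompose $\psi h_w$ as a combination of functions in the cone $\mathcal{C}^{k}_\delta$, in order to apply Proposition~\ref{cota norma}. Fix a constant $A>0$ large enough that $\psi+A>0$ on $M$ and that $(\psi+A)h_w\in\mathcal{C}^{k}_\delta$; this is possible because $|(\psi+A)h_w|_{\alpha,\delta}\leq|\psi|_{\alpha,\delta}\|h_w\|_\infty+\|\psi+A\|_\infty|h_w|_{\alpha,\delta}$, while the infimum of $(\psi+A)h_w$ grows linearly with $A$, so the ratio becomes as close to $|h_w|_{\alpha,\delta}/\inf h_w\le k$ as desired. Setting $c_1:=\int(\psi+A)h_w\,d\nu_w=c+A$ and $\tilde{\varphi}_1:=(\psi+A)h_w/c_1$, we have $\tilde{\varphi}_1\in\mathcal{C}^{k}_\delta$ with $\int\tilde{\varphi}_1\,d\nu_w=1$, and the identity $\psi h_w=c_1\tilde{\varphi}_1-A\,h_w$ holds. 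Applying $\hat{\mathcal{L}}^{n}_w$ gives
\[
\hat{\mathcal{L}}^{n}_w(\psi h_w)-c\,h_{\theta^{n}(w)}=c_1\bigl[\hat{\mathcal{L}}^{n}_w(\tilde{\varphi}_1)-\hat{\mathcal{L}}^{n}_w(h_w)\bigr],
\]
since $c_1-A=c$.

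By Proposition~\ref{cota norma} applied to $\tilde{\varphi}_1\in\mathcal{C}^{k}_\delta$ with unit $\nu_w$-integral,
\[
\bigl\|\hat{\mathcal{L}}^{n}_w(\tilde{\varphi}_1)-\hat{\mathcal{L}}^{n}_w(h_w)\bigr\|_\infty\leq K\tau^{n}.
\]
Therefore, using $d\nu_{\theta^{n}(w)}=h_{\theta^{n}(w)}^{-1}\,d\mu_{\theta^{n}(w)}$ and the uniform lower bound $\inf h_{\theta^{n}(w)}\geq 1/R$ from Proposition~\ref{h},
\[
\left|\int(\varphi\circ f^{n}_w)\psi\,d\mu_w-\int\psi\,d\mu_w\int\varphi\,d\mu_{\theta^{n}(w)}\right|\leq c_1 K R\,\tau^{n}\int|\varphi|\,d\mu_{\theta^{n}(w)}.
\]
Setting $K(\varphi,\psi):=(\|\psi\|_\infty+A)KR\,\|\varphi\|_{L^1(\mu_{\theta^{n}(w)})}$ gives the conclusion.

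The only genuinely delicate step is the second one, the cone decomposition: one must ensure that $(\psi+A)h_w$ actually lies in $\mathcal{C}^{k}_\delta$ for the same value of $k$ used in the construction of $\{h_w\}$. This is why I introduce the additive constant $A$; the quantitative estimate on $|(\psi+A)h_w|_{\alpha,\delta}/\inf(\psi+A)h_w$ sketched above does the job, and every other step is a clean consequence of results already proved in Sections~\ref{medreferencia} and~\ref{optransf}.
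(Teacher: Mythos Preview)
Your argument is correct and follows the same overall strategy as the paper: rewrite the correlation via duality as an integral of $\varphi$ against $\hat{\mathcal{L}}^n_w(\psi h_w)-c\,h_{\theta^n(w)}$, then reduce to Proposition~\ref{cota norma} by decomposing $\psi h_w$ into cone elements. The paper's decomposition is slightly different: it writes $g:=\psi h_w$ as $g^+-g^-$ with $g^\pm=\tfrac{1}{2}(|g|\pm g)+C$ and $C=k^{-1}|g|_{\alpha,\delta}$, which forces both pieces into $\mathcal{C}^k_\delta$ by construction, without using any special property of $h_w$. Your translation trick $(\psi+A)h_w$ is arguably cleaner, since one of the two terms is $h_w$ itself and the cancellation $c_1-A=c$ drops out for free; on the other hand it relies on the \emph{strict} inequality $|h_w|_{\alpha,\delta}/\inf h_w<k$ for the limiting ratio to eventually fall below $k$. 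You write ``$\le k$'', which is not quite enough: if equality held, closeness to $k$ would not give membership in $\mathcal{C}^k_\delta$. The strict inequality does hold, because $h_w$ is the uniform limit of the sequence $\hat{\mathcal{L}}^n_{\theta^{-n}(w)}(\mathbf{1})$, each term of which lies in $\mathcal{C}^{\gamma k}_\delta$ by Proposition~\ref{invariancia}, so $|h_w|_{\alpha,\delta}/\inf h_w\le\gamma k<k$; you should say this explicitly.
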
	

\begin{proof}
	Given $\varphi \in L^1(\mu_{\theta^{n}(w)})$ and $\psi \in C^{\alpha}(M)$ we supppose without loss of generality that $\int \psi \  d\mu_{w} = 1$. Let $\hat{\mathcal{L}}_w=\lambda^{-1}_w\mathcal{L}_w$ be the normalized operator. As a first case we consider $\psi \cdot h_w$ in the cone $\mathcal{C}^{k}_{\delta}$ for $k$ large enough. Recalling that $\mu_w=h_w\nu_w$ and that $\mathcal{L}^{*}_{w}\nu_{\theta(w)}=\lambda_w\nu_w$ we have 
	\begin{eqnarray*}
		&&\left|\int\! \left(\varphi \circ f^{n}_{w}\right) \psi \ d\mu_{w} - \int\!\varphi\ d\mu_{\theta^{n}(w)} \int\! \psi\ d\mu_{w}  \right|\\
		&\!\!=\!\!& \left|\int\! \left(\varphi \circ f^{n}_{w}\right) \psi\cdot  h_{w} \ d\nu_{w} - \int\! \varphi\cdot h_{\theta^{n}(w)} \ d\nu_{\theta^{n}(w)}  \right|\\
		%&=& \left|\int \left(\varphi_{\theta^{n}(w)} \circ f^{n}_{w}\right) \psi_w\cdot  h_{w} \ d({\mathcal{\hat{L}}^{*}}_{w})^{n}\nu_{\theta^{n}(w)} - \int \varphi_{\theta^{n}(w)}\cdot h_{\theta^{n}(w)} \ d\nu_{\theta^{n}(w)}  \right|\\
		&\!\!=\!\!&\left| \int\! \varphi \cdot \mathcal{\hat{L}}_{w}^n\left( \psi \cdot h_w \right) \ d\nu_{\theta^{n}(w)} -  \int \varphi\cdot h_{\theta^{n}(w)} \ d\nu_{\theta^{n}(w)} \right|\\
		&\!\!\leq\!\!& \|\varphi\|_1\left\| \mathcal{\hat{L}}_{w}^n\left( \psi \cdot h_w \right) -\mathcal{\hat{L}}_{w}^n\left(h_w \right)  \right\|_\infty.
		%&\!\!=\!\!& \left| \int\! \varphi \cdot \left[\mathcal{\hat{L}}_{w}^n\left( \psi \cdot h_w \right) -\mathcal{\hat{L}}_{w}^n\left(h_w \right)  \right] \ d\nu_{\theta^{n}(w)} \right|	  
	\end{eqnarray*}
	
	Since $\psi\cdot h_w\in \mathcal{C}^{k}_{\delta}$ and $\int \psi\cdot h_w \  d\nu_{w}=\int \psi\  d\mu_{w} = 1$ we can apply Proposition~\ref{cota norma} to conclude the existence of constants $K>0$ and $0<\tau<1$ such that 
	$$\left\| \mathcal{\hat{L}}_{w}^n\left( \psi \cdot h_w \right) -\mathcal{\hat{L}}_{w}^n\left(h_w \right)  \right\|_\infty\leq  K \tau^ n\,\,\,\mbox{for every}\,\,\, n\geq 1.$$
	For the general case we write $\psi\cdot h_w= g$ where
	$$g=g^{+}-g^{-}\quad;\quad g^{\pm}=\frac{1}{2}\left(|g|\pm\psi\right)+C\quad \mbox{and}\quad C=k^{-1}|\psi \cdot h_w|_{\alpha, \delta}.$$
	Therefore $g^{\pm}\in \mathcal{C}^{k}_{\delta}.$ From the previous estimates on $g^{\pm}$ and by linearity the proposition holds.
\end{proof}

Now we state the mensurability of the families $\{\nu_w\}_w$ and $\{h_w\}_w$. We start by observing that for a.e. $w\in X$ and every continuous function $g\in C^0(M)$ we have 
\begin{equation}\label{deccontinua}
\frac{1}{h_{\theta^{n}(w)}}\,\mathcal{\hat{L}}^{n}_w(g\cdot h_w)\longrightarrow \int g\cdot h_w\, d\nu_w\quad \mbox{when}\quad n\to\infty
\end{equation}
where $\hat{\mathcal{L}}$ is the normalized operator $\hat{\mathcal{L}}_w = \lambda^{-1}_w \mathcal{L}_w.$ Indeed, we can suppose that the function $g_w$ is H\"older continuous because any continuous function is aproximated by such functions. Moreover, following the proof of Theorem~\ref{decaimento 2}, we can just consider the case $ g \cdot h_w \in \mathcal{C}^{k}_{\delta}$ for $k$ large enough.  We have that
\begin{eqnarray*}
	\left\|\frac{1}{h_{\theta^{n}(w)}}\,\mathcal{\hat{L}}^{n}_w(g h_w) - \int g h_w\, d\nu_w \right\|_\infty 
	&\!\!\!\leq\!\!\! &\left\|\dfrac{1}{h_{\theta^{n}(w)} }\right\| \left\| \mathcal{\hat{L}}^{n}_w(g h_w) - \int g h_w\, d\nu_w\cdot h_{\theta^{n}(w)} \right\| \\
	%	&=&\displaystyle \left\|\dfrac{1}{h_{\theta^{n}(w)}} \right\|_\infty \left|\int g_w\cdot h_w\, d\nu_w\right|\left\| \mathcal{\hat{L}}^{n}_w\left(\frac{g_w\cdot h_w}{\int g_w\cdot h_w\, d\nu_w}\right) - h_{\theta^{n}(w)}  \right\|_\infty \\
	%&=&\displaystyle \left\|\dfrac{1}{h_{\theta^{n}(w)} }\right\|_\infty \left|\int g_w\cdot h_w\, d\nu_w\right|\left\| \mathcal{\hat{L}}^{n}_w\left(\frac{g_w\cdot h_w}{\int g_w\cdot h_w\, d\nu_w}\right) - \mathcal{\hat{L}}^{n}_w(h_{w}) \right\|_\infty\\
	&\!\!\!\leq\!\!\! &\displaystyle R \left\|g h_w\right\|_\infty \left\| \mathcal{\hat{L}}^{n}_w\left(\frac{gh_w}{\int g h_w\, d\nu_w}\right) - \mathcal{\hat{L}}^{n}_w(h_{w}) \right\|_\infty\\
\end{eqnarray*}
By Proposition~\ref{cota norma} %there exist a positive constant $K$ and $0<\tau<1$ such that 		$$\displaystyle\left\| \mathcal{\hat{L}}^{n}_w\left(\frac{g_w\cdot h_w}{\int g_w\cdot h_w\, d\nu_w}\right) - \mathcal{\hat{L}}^{n}_w(h_{w}) \right\|_\infty\\
%\leq K \tau^ n.$$ 
the convergence in (\ref{deccontinua}) follows. We use this on the next result.			

\begin{lemma}\label{unicidade familia} Let $\lambda_{w}=\nu_{\theta(w)}(\mathcal{L}_{w}(1)).$ The family $\{\nu_w\}$ is uniquely determined by $$
	\mathcal{L}_{w}^{\ast}\nu_{\theta(w)}=\lambda_{w}\nu_{w.}$$
	 Moreover, the map $w\mapsto \nu_w(g_w)$ is measurable for any $g\in \mathbb{L}^{1}_{\mathbb{P}}(X, C^0(M))$.
\end{lemma}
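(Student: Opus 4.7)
The plan is to deduce both statements from the exponential approximation in~(\ref{deccontinua}). For uniqueness, suppose $\{\tilde\nu_w\}_{w\in X}$ is another measurable family of probability measures with $\mathcal{L}_w^{\ast}\tilde\nu_{\theta(w)}=\tilde\lambda_w\tilde\nu_w$ and $\tilde\lambda_w=\tilde\nu_{\theta(w)}(\mathcal{L}_w(1))$. Iterating the dual relation, and writing $\hat{\mathcal{L}}_w=\lambda_w^{-1}\mathcal{L}_w$, I obtain for every continuous $\varphi:M\to\mathbb{R}$ and every $n\geq 1$ that
\[
\int \hat{\mathcal{L}}_w^{\,n}(\varphi)\,d\tilde\nu_{\theta^n(w)}=\frac{\tilde\lambda_w^{\,n}}{\lambda_w^{\,n}}\int\varphi\,d\tilde\nu_w.
\]
Choosing $\varphi=h_w$ and invoking $\hat{\mathcal{L}}_w^{\,n}h_w=h_{\theta^n(w)}$ pins down the eigenvalue ratio as $\tilde\lambda_w^{\,n}/\lambda_w^{\,n}=\bigl(\int h_{\theta^n(w)}\,d\tilde\nu_{\theta^n(w)}\bigr)/\bigl(\int h_w\,d\tilde\nu_w\bigr)$, a quantity that lies in $[1/R^{2},R^{2}]$ by the uniform bounds on $h_w$.

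For a general continuous $\varphi$, I apply~(\ref{deccontinua}) with $g=\varphi/h_w$ (legitimate since $h_w\geq 1/R$) to get $\hat{\mathcal{L}}_w^{\,n}(\varphi)-h_{\theta^n(w)}\int\varphi\,d\nu_w\to 0$ uniformly on $M$. Integrating this against $\tilde\nu_{\theta^n(w)}$ and substituting the eigenvalue-ratio identity yields
\[
\int h_{\theta^n(w)}\,d\tilde\nu_{\theta^n(w)}\left(\frac{\int\varphi\,d\tilde\nu_w}{\int h_w\,d\tilde\nu_w}-\int\varphi\,d\nu_w\right)\longrightarrow 0.
\]
Since $\int h_{\theta^n(w)}\,d\tilde\nu_{\theta^n(w)}\geq 1/R>0$, the parenthesized expression must vanish, so $\int\varphi\,d\tilde\nu_w=\bigl(\int h_w\,d\tilde\nu_w\bigr)\int\varphi\,d\nu_w$ for every continuous $\varphi$. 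Setting $\varphi\equiv 1$ forces $\int h_w\,d\tilde\nu_w=1$, and hence $\tilde\nu_w=\nu_w$ as Borel measures.

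For the measurability claim, the same identities evaluated at any fixed $x_{0}\in M$ yield the pointwise characterisation $\nu_w(g)=\lim_{n\to\infty}\mathcal{L}_w^{\,n}(g)(x_{0})/\mathcal{L}_w^{\,n}(1)(x_{0})$ for every $g\in C^{0}(M)$, because the factors $\lambda_w^{-n}$ cancel in the quotient and $\hat{\mathcal{L}}_w^{\,n}(1)(x_{0})/h_{\theta^{n}(w)}(x_{0})\to 1$. For $g\in\mathbb{L}^{1}_{\mathbb{P}}(X,C^{0}(M))$ and each $n\geq 1$, the map
\[
w\longmapsto\mathcal{L}_w^{\,n}(g_w)(x_{0})=\sum_{y\in f_w^{-n}(x_{0})}e^{S_n\phi_w(y)}g_w(y)
\]
is a finite sum (of at most $\deg(F)^{n}$ terms) over measurably selected inverse branches of jointly measurable integrands, and is therefore measurable in $w$; the same is true of $\mathcal{L}_w^{\,n}(1)(x_{0})$. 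Consequently $w\mapsto\nu_w(g_w)$ is a pointwise limit of measurable functions, hence measurable. The only delicate step is handling the $n$-varying measures $\tilde\nu_{\theta^n(w)}$ in the uniqueness argument; the crucial simplification is that after substituting the eigenvalue-ratio identity the common factor $\int h_{\theta^n(w)}\,d\tilde\nu_{\theta^n(w)}$ extracts cleanly and is bounded away from zero, leaving an $n$-independent equation that pins down $\tilde\nu_w$.
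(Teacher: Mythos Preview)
Your proof is correct. Both you and the paper draw on the convergence~(\ref{deccontinua}) as the essential tool, and your measurability argument---characterising $\nu_w(g_w)$ as the limit of the measurable ratio $\mathcal{L}_w^{\,n}(g_w)(x_0)/\mathcal{L}_w^{\,n}(1)(x_0)$---is effectively the pointwise version of the paper's formula $\nu_w(g_w)=\lim_n\|\mathcal{L}_w^{\,n} g_w\|_\infty/\|\mathcal{L}_w^{\,n} 1\|_\infty$.

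The uniqueness arguments differ in structure. The paper fixes an arbitrary sequence $(x_n)$ in $M$, forms the probabilities $\nu_{w,n}=(\mathcal{L}_w^{\,n})^{*}\delta_{x_n}/\mathcal{L}_w^{\,n} 1(x_n)$, and uses~(\ref{deccontinua}) to show $\nu_{w,n}\stackrel{w*}{\longrightarrow}\nu_w$; uniqueness is then read off from the fact that the resulting limit formula involves only the transfer operators and not the eigenmeasure itself. Your route is more direct: you posit a second eigenmeasure family $\{\tilde\nu_w\}$, integrate the uniform convergence $\hat{\mathcal{L}}_w^{\,n}(\varphi)-h_{\theta^n(w)}\int\varphi\,d\nu_w\to 0$ against $\tilde\nu_{\theta^n(w)}$, and use the eigenmeasure relation for $\tilde\nu$ together with the uniform bound on $\tilde\lambda_w^{\,n}/\lambda_w^{\,n}$ (obtained via $\varphi=h_w$) to force $\tilde\nu_w=\nu_w$. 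Your argument has the virtue of making explicit why an \emph{arbitrary} eigenmeasure family---with its a priori different eigenvalue $\tilde\lambda_w$---must coincide with $\nu_w$, a step the paper leaves somewhat implicit. The paper's approach, on the other hand, has the economy of producing a single explicit formula from which both uniqueness and measurability drop out at once.
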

\begin{proof}
	Fix $w\in X$ and let $(x_n)$ be a sequence of points in $M$. Define the probability $$\nu_{w, n}=\frac{(\mathcal{L}^{n}_w)^{\ast}\delta_{x_{n}}}{\mathcal{L}^{n}_{w}1(x_n)}.$$
	Since $\nu_w$ satisfies the condition $\mathcal{L}_{w}^{\ast}\nu_{\theta(w)}=\lambda_{w}\nu_{w} $ we can apply the convergence (\ref{deccontinua}) to conclude that for a.e. $w\in X$ and any continuous function $g_w$ we have 
	\begin{eqnarray*}\lim_{n\to\infty}{\nu_{w, n}(g_w)}=\lim_{n\to\infty}\frac{(\mathcal{L}^{n}_w)^{\ast}\delta_{x_{n}}(g_w)}{\mathcal{L}^{n}_{w}1(x_n)}\!\!\!\!&=&\!\!\!\!\lim_{n\to\infty}\frac{\mathcal{L}^{n}_{w}g_{w}(x_n)}{\mathcal{L}^{n}_{w}1(x_n)}\\ \\
		&=&\displaystyle\lim_{n\to\infty}\frac{\mathcal{L}^{n}_{w}\left(\displaystyle\frac{g_{w}}{h_w}\cdot h_w\right)(x_n)}{\mathcal{L}^{n}_{w}\left(\displaystyle\frac{1}{h_w}\cdot h_w\right)(x_n)}=\nu_{w}(g_w).
	\end{eqnarray*}  	
	Therefore follows the convergence of $\nu_{w,n}\stackrel{w*}\longrightarrow\nu_{w}$. Since the sequence $(x_n)$ was arbitrary the uniqueness of the family $\{\nu_w\}$ is proved.
	Moreover, the equality
	$$\lim_{n\to\infty}\frac{\|\mathcal{L}^{n}_{w}g_w\|_{\infty}}{\|\mathcal{L}^{n}_{w}1\|_{\infty}}=\nu_w(g_w).$$
	implies the mensurability of $w\mapsto \nu_w(g_w)$ since the transfer operator is measurable.
\end{proof}

The lemma above enables us to define the probability measure $\nu$ on the Borelean sets of $X\times M$ by the following 
$$\nu(g)=\int_{X}\, \int_{M}\,  g_w\, d\nu_w\,d\mathbb{P}(w).$$

Let $c>0$ given by condition~(\ref{cond2}) and consider $H\subset X\times M$ be the non-uniformly expanding set defined in~(\ref{H}). As in Section~\ref{medreferencia} for $w\in X$ consider 
$$H_w:=\left\{x\in M\ \ ;\ \ \limsup_{n\to+\infty} \frac{1}{n}\sum^{n-1}_{j=0}\log L_{\theta^{j}(w)}(f^{j}_{w}(x))^{-1}\leqslant -2c<0\right\}.$$
By Proposition~\ref{expanding} for almost every $w\in X$ we have $\nu_w(H_w)=1$. Thus we conclude that $\nu(H)=1$, i.e., $\nu$ is a non-uniformly expanding measure. 

Notice that since $\lambda_{w}=\nu_{\theta(w)}(\mathcal{L}_{w}(1))$ we have that the map $w\mapsto \lambda_w\in \mathbb{R}$ is measurable. Recalling that for almost every $w\in X$ the function $h_w$ is given by $$h_w=\lim_{n\to\infty}{\mathcal{\hat{L}}^{n}_{\theta^{-n}(w)}1}$$ we deduce the measurability of the map $(w, y)\mapsto h_w(y)$ from the measurability of $\lambda_w$ and the transfer operator. From this results it is well defined the probability measure $\mu_{F, \phi}\in\mathcal{M}_{\mathbb{P}}(X\times M)$ by the formula 
$$\mu_{F, \phi}(g)=\int_{X}\, \int_{M} \, g_w\cdot h_w\, d\nu_w\, d\mathbb{P}(w).$$
In order to prove the $F$-invariance of $\mu_{F, \phi}$ we first observe that 
\begin{eqnarray*}
	\mu_w (g_{\theta(w)}\circ f_w) =\!\! \int g_{\theta(w)}\circ f_w\cdot h_w\ d\nu_w
	% = \int  g\circ f_w \cdot h_w \ d \hat{\mathcal{L}}_w^\ast \nu_{\theta(w)}=
	\!\!\!\!\!&=&\!\!\!\!\!\int \hat{\mathcal{L}}_w (g_{\theta(w)}\circ f_w \cdot h_w) \ d\nu_{\theta(w)}\\
	&=&\!\!\! \int\dfrac{g_{\theta(w)}\cdot \hat{\mathcal{L}}_w (h_w)}{h_{\theta(w)}} \ d\mu_{\theta(w)}
	%& =\!\!\! \int \dfrac{g h_{\theta(w)}}{h_{\theta(w)}} \ d\mu_{\theta(w)}
	= \mu_{\theta(w)}(g_{\theta(w)}).
\end{eqnarray*}	
Therefore for every integrable function we get
\begin{align*}
\int\!\! g \circ F d\mu_{F, \phi} = \int\!\!\!  \int\!\! g_{\theta(w)}\circ f_w(x) \ d\mu_w d\mathbb{P}(w)
 = \int\!\!\! \int\!\!g_{\theta(w)} d\mu_{\theta(w)} d\mathbb{P}(w) = \int\!\! g\  d\mu_{F, \phi}.
\end{align*}
This finishes the proof of Theorem~\ref{formalismo}.

\section{Equilibrium States}\label{ee}
In this section we prove that the measure $\mu_{F, \phi}$ constructed above is an equilibrium state for $(F|_{\theta}, \phi)$. Moreover, we show that any ergodic non-uniformly expanding equilibrium state has disintegration absolutely continuous with respect to the system of reference measures. From this we derive the uniqueness.	

As a first step, in the next proposition we obtain an upper bound for the topological pressure of the random dynamical system.

\begin{proposition} \label{pressao} For any potential $\phi$ satisfying condition~(\ref{cond1}) we have that $$P_{F|\theta}(\phi)\leq\displaystyle \int_X \log \lambda_w\ d\mathbb{P}(w).$$
\end{proposition}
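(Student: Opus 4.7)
The plan is to derive the upper bound by exploiting two consequences of Theorem~\ref{formalismo}: the uniform $L^\infty$-bound on iterates of the transfer operator that comes from the invariant density, and the Gibbs-type behaviour of the reference measure at hyperbolic times (Proposition~\ref{conforme}). The ergodicity of $\mathbb{P}$ then converts the pointwise bound to an integrated one via Birkhoff.

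The first step is to record the uniform operator bound
$$\mathcal{L}_w^n(\mathbf{1})(x) \leq R^2\, \lambda_w^n \quad \text{for every } w,x,n,$$
which follows because $\mathbf{1} \leq R h_w$ and $\mathcal{L}_w^n(h_w) = \lambda_w^n\, h_{\theta^n(w)} \leq R \lambda_w^n$ by monotonicity of $\mathcal{L}_w^n$. Equivalently, the sum $\sum_{z \in f_w^{-n}(x)} e^{S_n\phi_w(z)}$ is dominated by $R^2 \lambda_w^n$. I also note that condition~(\ref{cond1}) together with $\lambda_w \geq \deg(f_w) e^{\inf \phi_w}$ implies the useful estimate $e^{\sup\phi_w} < \lambda_w$.

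Next, I would fix $\varepsilon>0$ smaller than $\delta$ of Lemma~\ref{lemacontra} and smaller than a Lebesgue number of the injectivity cover $\mathcal{U}_w$, and pick an arbitrary $(w,n,\varepsilon)$-separated set $F_n$. The dynamical balls $B_w(y,n,\varepsilon/2)$ for $y \in F_n$ are pairwise disjoint (two such balls sharing a point would force the two centres to stay $\varepsilon$-close along the first $n$ iterates). I would then split $F_n$ according to the largest hyperbolic time $k=k(y) \leq n$. For $k \geq 1$, Proposition~\ref{conforme} gives
$$e^{S_k\phi_w(y)} \;\leq\; K_\varepsilon(w)\, \lambda_w^k\, \nu_w\bigl(B_w(y,k,\varepsilon/2)\bigr),$$
and since $F_n$ is also $(w,k,\varepsilon)$-separated, summing over the relevant $y$'s (whose balls at time $k$ are disjoint) yields $\sum_{y:k(y)=k} e^{S_k \phi_w(y)} \leq K_\varepsilon(w) \lambda_w^k$. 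Combining this with the crude bound $e^{S_{n-k}\phi_{\theta^k(w)}(f_w^k(y))} \leq e^{(n-k)\sup \phi_w}$ and $e^{\sup\phi_w}<\lambda_w$ promotes it to $\sum_{y:k(y)=k} e^{S_n\phi_w(y)} \leq K_\varepsilon(w)\, \lambda_w^n$. Adding over the $n+1$ values of $k$ produces
$$\sum_{y \in F_n} e^{S_n\phi_w(y)} \;\leq\; (n+1)\, K_\varepsilon(w)\, \lambda_w^n.$$

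Finally, taking logarithms, dividing by $n$, and integrating against $\mathbb{P}$, the correction $\frac{\log(n+1)+\log K_\varepsilon(w)}{n}$ vanishes; by $\theta$-invariance of $\mathbb{P}$ one has $\int\log \lambda_w^n\, d\mathbb{P}(w) = n\int \log\lambda_w\, d\mathbb{P}(w)$, so letting $n \to \infty$ and then $\varepsilon \to 0$ yields $P_{F|\theta}(\phi) \leq \int \log \lambda_w\, d\mathbb{P}(w)$. The main obstacle I expect is controlling the subset of $F_n$ consisting of points with no hyperbolic time in $[1,n]$: Proposition~\ref{expanding} guarantees that $\nu_w$ charges such points only with exponentially small mass, but a separated subset sitting inside that bad set may a priori be populous, so the argument likely requires either an additional volumetric estimate or a refinement of the cover of $M$ to absorb this residual contribution.
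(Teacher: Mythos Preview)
Your argument contains a genuine gap beyond the one you flag at the end. The assertion ``since $F_n$ is also $(w,k,\varepsilon)$-separated'' is false: a $(w,n,\varepsilon)$-separated set need \emph{not} be $(w,k,\varepsilon)$-separated for $k<n$, because two points of $F_n$ may remain $\varepsilon$-close during the first $k$ iterates and only separate later. Hence the dynamical balls $B_w(y,k,\varepsilon/2)$ for the $y$'s with $k(y)=k$ are not pairwise disjoint in general, and the step $\sum_{y:k(y)=k} e^{S_k\phi_w(y)} \leq K_\varepsilon(w)\lambda_w^k$ does not follow from Proposition~\ref{conforme}. Together with the problem you already identify (points with no hyperbolic time in $[1,n]$), this leaves the separated-set estimate unproven.

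The paper circumvents both obstacles by working with the Carath\'eodory-type definition of pressure recalled in Section~\ref{preliminares}, which uses \emph{covers} of $M$ by dynamical balls of variable depth $n\geq N$ rather than separated sets. One is then free to \emph{choose} the cover: taking balls $B_w(x,n,\varepsilon)$ centred only at points $x\in H_n$ (for which $n$ is a hyperbolic time) and using that $H_w$ is dense gives a cover of $M$ to which Proposition~\ref{conforme} applies termwise. A Besicovitch argument bounds the overlap, and the resulting estimate on $m_\beta(w,\phi,F|_\theta,\varepsilon,N)$ shows it tends to zero for every $\beta>\int\log\lambda_w\,d\mathbb{P}$. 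The point is that the covering definition lets one restrict attention to hyperbolic times from the outset, so neither the disjointness issue nor the ``bad set'' of non-hyperbolic points ever arises. Your operator bound $\mathcal{L}_w^n(\mathbf{1})\leq R^2\lambda_w^n$ is correct and pleasant, but it does not by itself control arbitrary separated sums, since distinct points of $F_n$ need not be preimages of a common point.
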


\begin{proof}
	Fix $w\in X$ such that $H_w$ is not empty (see definition in Section~\ref{medreferencia}) and let $\varepsilon>0$ be small. Since every point $x\in H_w$ has infinitely many hyperbolic times, for $N>1$ large enough we have 
	$$H_w\subset\bigcup_{n\geq N}\bigcup_{x\in H_n}B_{w}(x,n, \varepsilon),$$
	where $H_n=H_n(w)$ denotes the set of points that have $n$ as a hyperbolic time. 
	From Lemma~\ref{lemacontra} each $f^n(B_w(x,n, \varepsilon))$ is the ball $B_{\theta^n(w)}(f_w^n(x),\varepsilon)$ in $M$, thus by apply Besicovitch Covering Lemma it is straigthforward to check that there exists a countable family $F_{n}\subset H_{n}$ such that every point $x\in H_{n}$ is covered by at most $d=d(\dim(M))$ dynamical balls $B_{w}(x,n, \varepsilon)$ with $x\in F_{n}.$ Therefore 
	$$\mathcal{F}_{N}=\left\{B_{w}(x,n, \varepsilon) : x\in F_{n}\;\textbf{\rm{and}}\;n\geq N\right\}$$ 
	is a countable open covering of $H_w$ by dynamic balls with diameter less than $\varepsilon>0$. Actually $\mathcal{F}_{N}$ is an open cover of $M$ because $H_w$ is dense.
	
	Let $\beta > \int\log\lambda_w\ d\mathbb{P}(w).$ By the definition of topological pressure given in Section~\ref{preliminares} and by applying Lemma~\ref{conforme} to each element in $\mathcal{F}_{N}$ we obtain the following
	\begin{eqnarray*}
		m_{\beta}(w, \phi, F|_{\theta}, \varepsilon, N)&\leq& \sum_{B_{w}(x,n, \varepsilon)\in \mathcal{F}_{N}}   e^{-\beta n+S_n\phi(B_{w}(x,n, \varepsilon))}\\
		&\leq&\sum_{n\geq N}\gamma^{-1}_{\varepsilon}(\theta^n(w))K_\varepsilon(w)e^{-(\beta n-\log\lambda_w^n)}\sum_{x\in F_n}\nu_w(B_{w}(x,n, \varepsilon))\\
		&\leq& d K_\varepsilon(w)\sum_{n\geq N}\gamma^{-1}_{\varepsilon}(\theta^n(w))e^{-(\beta-\frac{1}{n}\sum_{i=0}^{n-1}\log\lambda_{\theta^{i}(w)})n}.
	\end{eqnarray*} 	
	
	As from Remark~(\ref{K}) the variable $\gamma^{-1}_{\varepsilon}(\theta^n(w))$ is uniformly bounded and by ergodicity we have $\lim_{n\to \infty}\frac{1}{n}\sum_{i=0}^{n-1}\log\lambda_{\theta^{i}(w)}=\int\log\lambda_w\ d\mathbb{P}(w)$ for almost $w\in X$, we obtain when $N$ goes to infnity that
	$$m_{\beta}(w,\phi, F|_{\theta}, \varepsilon)=\lim_{N\rightarrow +\infty}{m_{\beta}(w, \phi, F|_{\theta}, \varepsilon, N)}=0$$
	for any $\beta>\int\log\lambda_w\ d\mathbb{P}(w)$ and $\varepsilon>0$ small. Thus we necessarily have $$P_{F|_{\theta}}(w, \phi)\leq\int_X\log\lambda_w\ d\mathbb{P}(w).$$
	Since this is true for $\mathbb{P}$-almost every $w\in X$, we prove the proposition.
\end{proof}

In the next subsection we are going to prove that $P_{F|_{\theta}}(\phi)=\int\log\lambda_w\ d\mathbb{P}(w).$

\subsection{Existence} Consider $\{\nu_w\}_{w\in X}$ the family of reference measures and $\{h_w\}_{w\in X}$ as in Theorem~\ref{formalismo}.
For each $w\in X$ let $\mu_w$ be the probability measure defined by $\mu_w=h_w\nu_w$.
Recalling that the jacobian of $\nu_w$ is  $J_{\nu_w}f_w=\lambda_{w}e^{-\phi_{w}}$ it is easy to verify that the jacobian of $\mu_w$ relative to $f_w$ is given by 
\begin{equation} \label{jacobiano}
J_{\mu_{w}}f_w=\frac{\lambda_{w}e^{-\phi_{w}}h_{\theta(w)}\circ f_{w}}{h_w}.
\end{equation}	
Consider $\mu_{F, \phi}$ the probability measure which desintergration is $\{\mu_w\}_w$ that means
$$\mu_{F, \phi}(g)=\int_{X}\, \int_{M} \, g_w\cdot h_w\, d\nu_w\, d\mathbb{P}(w)$$
for every continuous function $g:X\times M\to \mathbb{R}$. As we seen in the Subsection~\ref{mensu}, $\mu_{F,\phi}$ is $F$-invariant. In the next proposition we show its ergodicity.

\begin{proposition} The probability measure $\mu_{F, \phi}$ is ergodic.
\end{proposition}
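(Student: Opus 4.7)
The plan is to prove that any $F$-invariant measurable set has $\mu_{F,\phi}$-measure $0$ or $1$, by bootstrapping the ergodicity of $(\theta,\mathbb{P})$ with the fiberwise exponential decay of correlations already established in Theorem~\ref{decaimento}. The argument splits cleanly into a ``base'' contribution handled by ergodicity in the $w$-variable, and a ``fiber'' contribution handled by mixing of $\mu_w$.

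First I would fix an $F$-invariant measurable set $A\subset X\times M$ and set $A_w:=\{x\in M:(w,x)\in A\}$. Since $F^{-1}(A)=A$, one obtains $A_w=f_w^{-1}(A_{\theta(w)})$ for $\mathbb{P}$-a.e.\ $w$, and inductively $A_w=(f_w^n)^{-1}(A_{\theta^n(w)})$ for every $n\geq 1$. Define $g(w):=\mu_w(A_w)$, which is measurable by the measurability of the disintegration $\{\mu_w\}_{w\in X}$ constructed in Section~\ref{mensu}. Using the fiberwise push-forward identity $(f_w)_{*}\mu_w=\mu_{\theta(w)}$ proved at the end of the previous section, one has
\[
g(\theta(w))=\mu_{\theta(w)}(A_{\theta(w)})=\mu_w\bigl(f_w^{-1}(A_{\theta(w)})\bigr)=g(w),
\]
so by ergodicity of $(\theta,\mathbb{P})$ there is $c\in[0,1]$ with $g(w)=c$ for $\mathbb{P}$-a.e.\ $w$; integrating gives $\mu_{F,\phi}(A)=c$.

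The remaining point is to show $c\in\{0,1\}$, for which I would prove $c=c^{2}$. Rewriting via $A_w=(f_w^n)^{-1}(A_{\theta^n(w)})$ yields, for every $n\geq 1$,
\[
c=\mu_w(A_w)=\int \bigl(\mathbf{1}_{A_{\theta^n(w)}}\circ f_w^n\bigr)\,\mathbf{1}_{A_w}\, d\mu_w.
\]
Fix $w$ in the full-measure set on which $g\equiv c$ and where Theorem~\ref{decaimento} applies along the orbit $\theta^n(w)$. Since $C^\alpha(M)$ is dense in $L^1(\mu_w)$, for each $\varepsilon>0$ choose $\psi_\varepsilon\in C^\alpha(M)$ with $\|\mathbf{1}_{A_w}-\psi_\varepsilon\|_{L^1(\mu_w)}<\varepsilon$ and $0\leq\psi_\varepsilon\leq 1$. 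Applying Theorem~\ref{decaimento} with $\varphi=\mathbf{1}_{A_{\theta^n(w)}}\in L^1(\mu_{\theta^n(w)})$ and $\psi=\psi_\varepsilon$, and using $\mu_{\theta^n(w)}(A_{\theta^n(w)})=c$,
\[
\int \bigl(\mathbf{1}_{A_{\theta^n(w)}}\circ f_w^n\bigr)\,\psi_\varepsilon\, d\mu_w \;\xrightarrow[n\to\infty]{}\; c\int \psi_\varepsilon\, d\mu_w.
\]
Combining with the $\varepsilon$-approximation bound (and $|\mathbf{1}_{A_{\theta^n(w)}}\circ f_w^n|\leq 1$) gives $|c-c\cdot\int\psi_\varepsilon d\mu_w|\leq\varepsilon$, and then letting $\varepsilon\to 0$, so that $\int\psi_\varepsilon d\mu_w\to\mu_w(A_w)=c$, one concludes $c=c^{2}$.

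The main technical point is to ensure that the decay-of-correlations constant $K(\varphi_n,\psi_\varepsilon)$ remains uniformly bounded in $n$ as one lets $n\to\infty$ at fixed $\varepsilon$. Inspecting the proof of Theorem~\ref{decaimento 2}, however, the bound obtained there is of the form $\|\varphi\|_1\,\|\hat{\mathcal{L}}_w^n(\psi\cdot h_w)-\hat{\mathcal{L}}_w^n(h_w)\|_\infty\leq\|\varphi\|_1\,K(\psi)\,\tau^n$, where the dependence on $\varphi$ factors through the $L^1$ norm only. Since $\|\mathbf{1}_{A_{\theta^n(w)}}\|_{L^1(\mu_{\theta^n(w)})}=c\leq 1$ uniformly in $n$, the order of limits ($n\to\infty$ first, then $\varepsilon\to 0$) is legitimate and the argument closes.
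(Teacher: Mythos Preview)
Your proof is correct and follows essentially the same approach as the paper's: reduce to fibers via $F$-invariance, use ergodicity of $(\theta,\mathbb{P})$ on the base to make $\mu_w(A_w)$ constant, and then invoke the fiberwise decay of correlations (Theorem~\ref{decaimento}) to force that constant to be $0$ or $1$. Your presentation is in fact a bit more careful than the paper's, which applies the decay directly to $\psi_w\in L^1(\mu_w)$ without spelling out the H\"older approximation or the uniformity of $K(\varphi_n,\psi)$ in $n$; your observation that the constant factors as $\|\varphi\|_{1}\cdot K(\psi)$ is exactly what is needed to justify that step.
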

\begin{proof}
	Given a $F$-invariant set $A\subset X\times M$, for each $w\in X$ denote by $A_{w}$ the set $A_w=\{z\in M\,;\, (w, z)\in A\}.$ The $F$-invariance of $A$ implies that $f_{w}^{-1}(A_{\theta(w)})=A_w.$ Consider $X_{0}=\{w\in X\,;\,\mu_{w}(A_w)>0\}$. It is straightforward to check that $X_0$ is a $\theta$-invariant subset of $X$. Since $\theta$ is ergodic with respect to $\mathbb{P}$, we will obtain the ergodicity of $\mu_{F, \phi}$ by showing that for almost every $w\in X_0$ we have $\mu_{w}(A_w)=1$, if $\mathbb{P}(X_0)>0.$
	
	Let $\varphi_w$ be the characteristc function of $A_w$, i.e., $\varphi_w=\mathbf{1}_{A_w}$. Notice that in $\mathbb{P}$-a.e. holds $\varphi_{\theta^{n}(w)}\circ f_{w}^{n}=\varphi_w$.  Given $\psi_w\in L^{1}(\mu_w)$ such that $\int \psi_w\, d\mu_w=0$, from the decay correlation property of $\mu_w$, Theorem~\ref{decaimento}, follows that $$\mu_w\bigl(\bigl(\varphi_{\theta^{n}(w)}\circ f_{w}^{n}\bigr)\cdot \psi_w\bigr)\rightarrow 0\,\, \mbox{when}\,\, n\to+\infty.$$	And thus $\int_{A_w} \psi_w\, d\mu_w=0$ for any $\psi_w\in L^{1}(\mu_w)$ satisfying $\int \psi_w\, d\mu_w=0$. This proves that $\mu_w(A_w)=1$ for $\mathbb{P}$-almost every $w\in X_{0}$ which finishes the proof.
\end{proof}

In Subsection~\ref{mensu} we observe that the measure $\nu$ defined by 
$$\nu(g)=\int_{X}\, \int_{M}\,  g_w\, d\nu_w\,d\mathbb{P}(w)$$
is non-uniformly expanding. Since $\mu_{F, \phi}$ is absolutely continuous with respect to it, we have that $\mu_{F, \phi}$ is also a non-uniformly expanding measure. In particular, by Lemma~\ref{gp}, it admits a generating partition. Thus we can use the random Rokhlin's formula to express the entropy of $\mu_{F, \phi}$ in terms of its jacobian.

	\begin{theorem}[Random Rokhlin's formula]
		\label{formulaRokhlin}
		
		Let $\mu\in \mathcal{M}_{\mathbb{P}}(F)$ be an ergodic measure which admits a $\mu$-generating partition. Then 
		\[
		h_{\mu}(F|\theta)= \int \log J_\mu (F) \,d\mu =  \int_{X}\biggl( \int_{M}\log J_{\mu_w}f_{w}(y) d\mu_{w}(y)\biggr) \, d\mathbb{P}(w),
		\]
		where $J_{\mu_{w}}f_{w}$ denotes the jacobian of $f_{w}$ relative to $\mu_{w}$.
	\end{theorem}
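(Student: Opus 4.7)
The plan is to adapt the classical Rokhlin entropy formula to the skew-product setting using a $\mu$-generating partition supplied by Lemma~\ref{gp}. First I would fix a finite measurable partition $\xi$ of $M$ whose diameter is smaller than both $\delta(c)$ and the minimum radius of injectivity of the maps $f_w$, so that $\xi$ is $\mu$-generating and $f_w|_P$ is injective for every $P\in\xi$ and $\mathbb{P}$-a.e.\ $w\in X$. The standard random Kolmogorov--Sinai argument then reduces the problem to computing $h_\mu(F|\theta;\xi)$.

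Writing $\xi_w^n:=\bigvee_{j=0}^{n-1}f_w^{-j}(\xi)$, the factorization $\xi_w^n=\xi\vee f_w^{-1}(\xi_{\theta(w)}^{n-1})$, the cocycle formula for conditional entropy, and the $F$-invariance identity $(f_w)_\ast\mu_w=\mu_{\theta(w)}$ together yield
\[
H_{\mu_w}(\xi_w^n)=H_{\mu_w}\bigl(\xi\,\big|\,f_w^{-1}(\xi_{\theta(w)}^{n-1})\bigr)+H_{\mu_{\theta(w)}}(\xi_{\theta(w)}^{n-1}).
\]
Iterating this identity and integrating against $\mathbb{P}$, which is $\theta$-invariant, one obtains
\[
\frac{1}{n}\int_X H_{\mu_w}(\xi_w^n)\,d\mathbb{P}(w)=\frac{1}{n}\sum_{j=0}^{n-1}\int_X H_{\mu_w}\bigl(\xi\,\big|\,f_w^{-1}(\xi_{\theta(w)}^{j})\bigr)\,d\mathbb{P}(w).
\]
Since the integrand is nonincreasing in $j$ (coarser conditioning raises entropy) and the $\sigma$-algebras $f_w^{-1}(\xi_{\theta(w)}^{j})$ increase to $f_w^{-1}(\mathcal{B})$ as a consequence of $\xi$ being $\mu$-generating, Cesàro's theorem and monotone convergence of conditional entropies combine to give
\[
h_\mu(F|\theta;\xi)=\int_X H_{\mu_w}\bigl(\xi\,\big|\,f_w^{-1}(\mathcal{B})\bigr)\,d\mathbb{P}(w).
\]

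The remaining step is to identify the inner conditional entropy with the integral of $\log J_{\mu_w}f_w$. Because $f_w|_P$ is injective and the jacobian satisfies $\mu_{\theta(w)}(f_w(A))=\int_A J_{\mu_w}f_w\,d\mu_w$ for measurable $A\subset P$, a change of variables combined with $(f_w)_\ast\mu_w=\mu_{\theta(w)}$ yields the pointwise identity
\[
\sum_{z\in f_w^{-1}(y)}\frac{1}{J_{\mu_w}f_w(z)}=1\qquad\text{for }\mu_{\theta(w)}\text{-a.e. }y.
\]
This identifies the conditional probability as $\mathbb{E}_{\mu_w}(\mathbf{1}_P\,|\,f_w^{-1}(\mathcal{B}))(x)=J_{\mu_w}f_w(x)^{-1}$ for $\mu_w$-a.e.\ $x\in P$, and consequently
\[
H_{\mu_w}\bigl(\xi\,\big|\,f_w^{-1}(\mathcal{B})\bigr)=\int_M \log J_{\mu_w}f_w\,d\mu_w.
\]
Integrating over $\mathbb{P}$ and invoking the abstract identity $\log J_\mu F=\log J_{\mu_w}f_w$ for $\mu$-a.e.\ $(w,x)$ gives the claimed formula.

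The main technical hurdle is the careful verification of the fiber conditional probability formula: one must check that $\bigvee_{j=1}^\infty f_w^{-j}(\xi)$ agrees with $f_w^{-1}(\mathcal{B})$ modulo $\mu_w$-null sets (a direct consequence of the $\mu$-generating property applied at $\theta(w)$), and justify the pushforward computation yielding the sum-to-one identity, where the assumption $\deg(F)<\infty$ ensures that only finitely many atoms of $\xi$ meet each fiber of $f_w$.
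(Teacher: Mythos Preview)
The paper does not actually prove this theorem: it is stated as a known result and then immediately applied to compute $h_{\mu_{F,\phi}}(F|\theta)$. So there is no ``paper's own proof'' to compare against; the Random Rokhlin formula is imported from the literature (the random ergodic theory in \cite{Pei1} and related sources).

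Your argument is the standard one and is essentially correct. Two minor remarks. First, you invoke Lemma~\ref{gp} to produce the generating partition, but the theorem as stated takes the existence of a $\mu$-generating partition as a \emph{hypothesis}; you should simply work with the given partition rather than appealing to non-uniform expansion, which is a stronger assumption not present in the statement. Second, in the telescoping step you should be explicit that the iteration of
\[
H_{\mu_w}(\xi_w^n)=H_{\mu_w}\bigl(\xi\,\big|\,f_w^{-1}(\xi_{\theta(w)}^{n-1})\bigr)+H_{\mu_{\theta(w)}}(\xi_{\theta(w)}^{n-1})
\]
produces terms indexed by $\theta^j(w)$, and that it is the $\theta$-invariance of $\mathbb{P}$ that converts the resulting sum into $\sum_{j=0}^{n-1}\int_X H_{\mu_w}(\xi\,|\,f_w^{-1}(\xi_{\theta(w)}^{j}))\,d\mathbb{P}(w)$ after reindexing. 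With those clarifications the proof goes through; the identification of the conditional measures on the fibers of $f_w$ with $J_{\mu_w}f_w^{-1}$ is exactly the classical Rokhlin computation, and your observation that $\deg(F)<\infty$ keeps the sums finite is the only place the ambient hypotheses of the paper enter.
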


	Now we are read to prove that $\mu_{F, \phi}$ is an equilibrium state for $(F|\theta, \phi).$
			\begin{eqnarray*}
		h_{\mu_{F, \phi}}(F|\theta)& =&  \int_{X}\int_{M}\log\, J_{\mu_{w}}f_{w}(y)\, d\mu_{w}(y)\, d\mathbb{P}(w) \\ 
		&=& \int_{X} \int_{M}\log \biggl(\frac{\lambda_{w}e^{-\phi_{w}}h_{\theta(w)}\circ f_{w}}{h_w}\biggr)(y)\, d\mu_{w}(y) \, d\mathbb{P}(w)\\ 
		& =& \int_{X} \int_{M} \log\lambda_{w} d\mu_{w} (y) d\mathbb{P}(w) - \int_{X} \int_{M} \phi_{w}\, d\mu_{w} (y) d\mathbb{P}(w)\\ 
		&+&\int_{X} \int_{M} \biggl(\log h_{\theta(w)}\circ f_{w}(y) - \log h_{w}(y)\biggr)\, d\mu_{w} (y) d\mathbb{P}(w)
	\end{eqnarray*}
From the $F$-invariance of $\mu_{F, \phi}$ we derive that
$$\int_{X} \int_{M} \biggl(\log h_{\theta(w)}\circ f_{w}(y) - \log h_{w}(y)\biggr)\, d\mu_{w} (y) d\mathbb{P}(w)=0$$
 Thus we can write 
 \begin{eqnarray*}
		h_{\mu_{F, \phi}}(F|\theta)&=& \int_{X} \int_{M} \log\lambda_{w} d\mu_{w} (y) d\mathbb{P}(w) - \int_{X} \int_{M} \phi_{w}\, d\mu_{w} (y) d\mathbb{P}(w)\\ \\&= &\int_{X}\log \lambda_w\, d\mathbb{P}(w) - \int \phi\, d\mu_{F, \phi}
%		&=& P_{ F|\theta}(\phi) - \int \phi\, d\mu_{F, \phi}
		\end{eqnarray*}
Applying the variational principle~(\ref{eqprinvaria}) and  Proposition~\ref{pressao} follows that 
$$\int_{X}\log \lambda_w\, d\mathbb{P}(w)=h_{\mu_{F, \phi}}(F|\theta)+\int \phi\, d\mu_{F, \phi}\leq P_{ F|\theta}(\phi)\leq\int_{X}\log \lambda_w\, d\mathbb{P}(w)$$
which implies in particular that $P_{ F|\theta}(\phi)=\displaystyle\int_{X}\log \lambda_w\, d\mathbb{P}(w)$ and so $\mu_{F, \phi}$ is an equilibrium state.

	\subsection{Uniqueness} Until now we have proved the existence of an equilibrium state for $(F|_{\theta}, \phi)$. Here we prove the uniqueness in the set of non-uniformly expanding measures.

Let $\eta$ be an ergodic non-uniformly expanding equilibrium state for $(F|_{\theta}, \phi)$. We prove that the disintegration of $\eta$ is absolutely continuous to the reference measure. For this we use the following remark from the basic Calculus.

\begin{remark} [Jensen's Inequality]\label{Jensen} Given positive numbers $p_i>0$ and $q_i>0$, $i=1, \cdots, n$ such that $\sum_{i=1}^{n}p_i=1$ we have that $\sum_{i=1}^{n}p_i\log q_i\leq\log \left(\sum_{i=1}^{n}p_i q_i\right)$ and the equality holds if and only if the $q_i$ are equal.
\end{remark}

\begin{proposition} 
\label{propestadodeequilibrio}
Consider $\eta\in \mathcal{M}_{\mathbb{P}}(F)$ any ergodic non-uniformly expanding equilibrium state of $(F|_{\theta}, \phi)$ and let $(\eta_{w})_w$ its disintegration. Then for almost $w\in X$, $\eta_w$ is absolutely continuous with respect to $\nu_w$.
\end{proposition}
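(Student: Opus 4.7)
The plan is to follow the classical Rokhlin--Jensen strategy. Since $\eta$ is non-uniformly expanding, Lemma~\ref{gp} provides an $\eta$-generating partition, so Theorem~\ref{formulaRokhlin} yields $h_{\eta}(F|\theta) = \int \log J_{\eta_w} f_w \, d\eta$. Combining this with the equilibrium condition $h_{\eta}(F|\theta) + \int \phi \, d\eta = \int \log \lambda_w \, d\mathbb{P}$ (here I use Proposition~\ref{pressao} together with the matching lower bound already proved for $\mu_{F,\phi}$), together with the explicit expression~\eqref{jacobiano} for $J_{\mu_w} f_w$, would give the identity, after killing the telescoping term $\int \log(h_{\theta(w)}\!\circ\! f_w / h_w)\, d\eta=0$ (valid by $F$-invariance of $\eta$ and boundedness of $\log h_w$),
\begin{equation*}
\int \log \frac{J_{\eta_w} f_w}{J_{\mu_w} f_w} \, d\eta = 0.
\end{equation*}

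Next I would disintegrate along $X$ and use the change of variables induced by $(f_w)_{\ast}\eta_w = \eta_{\theta(w)}$, namely $\int g\, d\eta_w = \int \bigl(\sum_{y\in f_w^{-1}(x)} g(y)/J_{\eta_w}f_w(y)\bigr) d\eta_{\theta(w)}(x)$. Setting $p_y := 1/J_{\eta_w} f_w(y)$ and $q_y := 1/J_{\mu_w} f_w(y)$ for $y\in f_w^{-1}(x)$, each forms a probability distribution on the finite set $f_w^{-1}(x)$: $\sum_y p_y = 1$ follows from the invariance of $\eta_w$ on the fiber, while $\sum_y q_y = 1$ is exactly the unravelling of $\mathcal{L}_w h_w = \lambda_w h_{\theta(w)}$. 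The integral identity then becomes
\begin{equation*}
\int \!\int \sum_{y\in f_w^{-1}(x)} p_y \log \frac{q_y}{p_y}\, d\eta_{\theta(w)}(x)\, d\mathbb{P}(w) = 0.
\end{equation*}
Jensen's inequality (Remark~\ref{Jensen}) applied to the concave function $\log$ gives $\sum_y p_y \log(q_y/p_y) \leq \log \sum_y q_y = 0$, with equality iff $p_y = q_y$ for all $y\in f_w^{-1}(x)$. Since the integral of a nonpositive quantity is zero, the integrand vanishes almost everywhere, and hence $J_{\eta_w} f_w = J_{\mu_w} f_w$ for $\eta$-a.e. $(w,y)$.

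The final and most delicate step is passing from equality of jacobians to $\eta_w \ll \nu_w$. Since $\mu_w = h_w\nu_w$ with $h_w$ bounded away from $0$ and $\infty$, it is equivalent to show $\eta_w \ll \mu_w$. I would perform the Lebesgue decomposition $\eta = \eta^{ac} + \eta^{s}$ with respect to $\mu_{F,\phi}$; both parts are $F$-invariant because $F_\ast \mu_{F,\phi} = \mu_{F,\phi}$ preserves both absolute continuity and singularity. The density $d\eta^{ac}/d\mu_{F,\phi}$ on each fibre is then a fixed point of the normalized transfer operator $\tilde{\mathcal{L}}_w(\cdot) = h_{\theta(w)}^{-1}\hat{\mathcal{L}}_w(\cdot\, h_w)$, which is a contraction by Proposition~\ref{contracaocone}, forcing the density to be constant and $\eta^{ac}$ to be a scalar multiple of $\mu_{F,\phi}$. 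To exclude a nonzero singular part, I would note that $\eta^{s}$ is still non-uniformly expanding (hyperbolic times are inherited from $\eta$) and satisfies the same fibrewise jacobian identity, so iterating the identity $J_{\eta^s_w}f_w = J_{\mu_w} f_w$ along hyperbolic times and comparing with the Gibbs estimates of Proposition~\ref{conforme} would produce lower bounds on $\eta^{s}_w$ of dynamical balls of the same order as the $\nu_w$-bounds, contradicting $\eta^{s}_w \perp \nu_w$. The main obstacle is precisely this last passage: making the spectral-gap/Gibbs-bound argument fully rigorous in the random setting while keeping track of measurability of all the fibrewise densities.
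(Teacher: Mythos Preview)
Your Rokhlin--Jensen argument for the jacobian identity $J_{\eta_w}f_w = J_{\mu_w}f_w$ $\eta$-a.e.\ is correct and coincides with the paper's first step essentially verbatim.

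The divergence is in how one passes from the jacobian identity to $\eta_w \ll \nu_w$. Your proposed route (Lebesgue decomposition $\eta = \eta^{ac} + \eta^{s}$ with respect to $\mu_{F,\phi}$, spectral gap for the fibrewise density $d\eta^{ac}_w/d\mu_w$, Gibbs-bound contradiction for $\eta^{s}$) has the gap you yourself flag: the density is only $L^1$, not H\"older, so the cone contraction of Proposition~\ref{contracaocone} does not apply to it directly; and the exclusion of the singular part via Proposition~\ref{conforme} remains a sketch rather than a proof.

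The paper bypasses all of this with a short, clean computation. Once $J_{\eta_w}f_w = \lambda_w e^{-\phi_w}(h_{\theta(w)}\!\circ\! f_w)/h_w$ holds $\eta$-a.e., one verifies directly that the family $\{h_w^{-1}\eta_w\}_w$ satisfies the reference-measure eigenequation
\[
\mathcal{L}_w^{\ast}\!\left(\tfrac{1}{h_{\theta(w)}}\,\eta_{\theta(w)}\right) \;=\; \lambda_w\, \tfrac{1}{h_w}\,\eta_w .
\]
The calculation is just the change of variables for $\eta_w$: since $e^{\phi_w(y)}/h_{\theta(w)}(f_w(y)) = \lambda_w\big/\bigl(J_{\eta_w}f_w(y)\, h_w(y)\bigr)$, one has
\[
\int \mathcal{L}_w\psi \cdot \frac{1}{h_{\theta(w)}}\, d\eta_{\theta(w)}
= \lambda_w \int \sum_{y\in f_w^{-1}(x)} \frac{\psi(y)}{h_w(y)}\,\frac{1}{J_{\eta_w}f_w(y)}\, d\eta_{\theta(w)}(x)
= \lambda_w \int \frac{\psi}{h_w}\, d\eta_w .
\]
The uniqueness clause of Theorem~\ref{formalismo} (proved in Lemma~\ref{unicidade familia}) then forces $h_w^{-1}\eta_w$ to coincide with $\nu_w$, giving $\eta_w \ll \nu_w$ at once. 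This replaces your entire final paragraph and is what you should use.
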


\begin{proof} First we prove that for almost every $w\in X$ the jacobian of $\eta_w$ is given by $$J_{\eta_{w}}f_w=\frac{\lambda_w e^{-\phi_w}\cdot h_{\theta(w)}\circ f_w}{h_w}.$$
	Indeed, since $\eta$ is an ergodic non-uniformly expanding equilibrium state we can apply the Rokhlin's formula to obtain that 
	$$h_{\eta}(F|\theta)= \int_{X}\biggl( \int_{M}\log J_{\eta_w}f_{w}(y)\, d\eta_{w}(y)\biggr)\, d\mathbb{P}(w)=\int_{X\times M}\log J_{\eta_w}f_{w}(y)\, d\eta(w, y)$$
	
	Recalling that $h_{w}$ is a bounded function and that the jacobian of $\mu_{w}$ is given by $J_{{\mu}_{w}}{f_w}\;=\lambda_w e^{-{\phi_w}}\cdot{h_{\theta(w)}}\circ{f_w}/{h_w}$ we have 
	\begin{eqnarray*}
		\int{\log\frac{J_{\eta_w}f_w}{J_{\mu_{w}}f_w}}\,d\eta(w, y)\!\!\!&=&\!\!\!\int{\log J_{\eta_w}f_w}\,d\eta-\int{\!\!\left(\log\lambda_w-\phi_w+\log\frac{h_{\theta(w)}\circ{f_w}}{h_w}\right)}\,d\eta\\
		&=&\!\!\!h_{\eta}(F|\theta)\!-P_{F|\theta}(\phi)+\!\int{\!\phi_w+ \log h_w-\log h_{\theta(w)}\!\!\circ\! f_w}\, d\eta \geq 0.
		\end{eqnarray*}
	
	From the definition of jacobian we can write
	\begin{equation} \label{defjacobiano}
		\int{\sum_{z=f_w^{-1}(y)}{J_{\eta_w}f_w^{-1}(z)}\log\frac{J_{\eta_w}f_w}{J_{\mu_{w}}f_w}(z)}d\eta(w, y)=\int{\log\frac{J_{\eta_w}f_w}{J_{\mu_{w}}f_w}(y)}\,d\eta(w, y)\geq 0.
	\end{equation}
		Take $p_i=J_{\eta_w}f_w^{-1}(z_i)$ and $q_i=J_{\eta_w}f_w(z_i)/J_{\mu_{w}}f_w(z_i)$ where $z_i$ are the pre-images of $y.$ Since $(f_w)_{\ast}\eta_w=\eta_{\theta(w)}$ we have $\sum_{i=1}^{\deg(f_w)}{p_i}=\sum_{z=f_w^{-1}(y)}J_{\eta_w}f_w^{-1}(z)=1$
	for $\eta_{\theta(w)}$ almost $y\in M.$ Therefore we can apply Remark~\ref{Jensen} to conclude that
	\begin{eqnarray*}
		\sum_{z=f_w^{-1}(y)}{J_{\eta_w}f_w^{-1}(z)}\log\frac{J_{\eta_w}f_w}{J_{\mu_{w}}f_w}(z)
		&\leq&\log\!\left(\sum_{z=f_w^{-1}(y)}\frac{J_{\eta_w}f_w^{-1}\cdot {J_{\eta_w}f_w}}{J_{\mu_{w}}f_w}\right)(z)\\
			&=&\log\left(\frac{\sum_{z=f_{w}^{-1}(y)}e^{\phi_w(z)}h_w(z)}{\lambda_w h_{\theta(w)}\circ f_w(z)}\right)\\
		&=&\log\left(\frac{\lambda_w h_{\theta(w)}(y)}{\lambda_w h_{\theta(w)}(y)}\right)=0.
	\end{eqnarray*}	
	for $\eta_{\theta(w)}$ almost $y\in M.$ Recalling the inequality (\ref{defjacobiano}) we obtain that 
		$$0\leq\int{\!\!\log\frac{J_{\eta_w}f_w}{J_{\mu_{w}}f_w}(y)}\,d\eta(w, y)\!=\!\int{\!\!\!\sum_{z=f_w^{-1}(y)}{J_{\eta_w}f_w^{-1}(z)}\log\frac{J_{\eta_w}f_w}{J_{\mu_{w}}f_w}(z)}d\eta(w, y)=0.$$
		Thus, from the second part of Remark~\ref{Jensen}, the values $q_i=J_{\eta_w}f_w(z_i)/J_{\mu_{w}}f_w(z_i)$ must be the same for all $z_i\in f_{w}^{-1}(y)$ in a full $\eta_{\theta(w)}$-measure set. In other words for every $y\in M$ on the pre-image of a full $\eta_{\theta(w)}$-measure set holds $$J_{\eta_w}f_w(y)=J_{\mu_{w}}f_w(y)=\frac{\lambda_w e^{-\phi_w}\cdot h_{\theta(w)}\circ f_w}{h_w}(y).$$ 
	
	To finish the proof of the proposition we observe that $\frac{1}{h_{w}}\cdot\eta_w$ is a reference measure associated to $\lambda_w$ for the dual transfer operator: 
	\begin{eqnarray*}
		\mathcal{L}^{\ast}_{w}\left(\frac{1}{h_{\theta(w)}}\cdot\eta_{\theta(w)}\right)(\psi)&=&\int\mathcal{L}_{w}(\psi)(x)\, d\left(\frac{1}{h_{\theta(w)}}\cdot\eta_{\theta(w)}\right)\\
		&=&\int{\sum_{y=f_w^{-1}(x)}e^{\phi_w(y)}(y)}\psi(y)\cdot\frac{1}{h_{\theta(w)}}(x)\,d\eta_{\theta(w)}\\
		&=&\!\!\!\int{\sum_{y=f_w^{-1}(x)}\lambda_w\frac{\psi(y)}{h_w(y)}\left(\frac{h_w(y)}{\lambda_w e^{-\phi_w(y)}\cdot h_{\theta(w)}\circ f_w(y)}\right)}\,d\eta_{\theta(w)}\\
		&=&\int{\sum_{y=f_w^{-1}(x)}\lambda_w\frac{\psi(y)}{h_w(y)} J_{\eta_w}f_w^{-1}(y)}\,d\eta_{\theta(w)}\\
		&=&\lambda_w\int{\psi}\,d\left(\frac{1}{h_{w}}\cdot\eta_w\right)
			\end{eqnarray*}
	
	From the uniqueness given by Theorem~\ref{formalismo} we conclude that $\frac{1}{h_{w}}\cdot\eta_w$ is equivalent to $\nu_w$ and thus $\eta_w$ is absolutely continuous to this one.
\end{proof}

Finally we prove the uniqueness of the equilibrium state associated to $(F|_{\theta}, \phi).$ Suppose that there exist $\mu$ and $\eta$ two ergodic equilibrium states. Let $(\mu_w)_{w\in X}$ and $(\eta_w)_{w\in X}$ be the disintegration of $\mu$ and $\eta$, respectively. 

By the proposition above we have that $\mu_{w}$ and $\eta_{w}$ are equivalent measures. From the Radon-Nykodym theorem we know that there exist a mensurable function $q_{w}: M\rightarrow \mathbb{R}$ such that $\mu_{w}=q_{w}\eta_{w}$ for every $w\in X$. Consider $q: X\times M \rightarrow \mathbb{R}$ defined by $q(w,x)=q_{w}(x)$. Given a measurable set $E\subset X\times M$  consider $E_{w}\subset M$  the instersection $E_{w}=E\cap M$. Then we have that
\begin{align*}
\mu(E)=\int_{X}\mu_{w}(E_{w})d\mathbb{P}(w)=\int_{X}\int_{E_{w}}q_{w}d\eta_{w}d\mathbb{P}(w)=\int_{E}q \ d\eta.
\end{align*}
Moreover from the $F$-invariance of $\mu$ and $\eta$ follows that 
$$	\mu(E)=F_{\ast}\mu(E)=(q\circ F)F_{\ast}\eta(E)=(q\circ F)\eta(E).
$$

Since the Radon-Nykodym derivative is essentially unique, we conclude that $q=q\circ F$ at $\eta$-almost every point. By ergodicity we have that $q$ is constant everywhere and thus $\mu=\eta.$ 

\subsection{Positive Lyapunov exponents} The main tool in the proof of Proposition~\ref{propestadodeequilibrio} is the existence of a generating partition for the equilibrium state. In the context of random dynamical systems generated by non-uniformly expanding maps, the existence of generating partitions for ergodic measures with Lyapunov exponents bounded away from zero was proved by Bilbao and Oliveira in \cite{Bilbao}. Therefore  in this setting we can also apply our Proposition~\ref{propestadodeequilibrio} to obtain uniqueness of equilibrium states. This is our goal now.

Consider $M^d$ a compact and connected Riemann manifold of dimension $d$. Let $F:X\times M\to X\times M$ be the skew-product $(\theta(w), f_w(x))$ generated by $C^1$ local diffeomorphisms $f_w: M\to M$ satisfying (I)-(III). For $1\leq k\leq d-1$ define $$
C_{k}(w,x)=\limsup_{n\rightarrow +\infty} \frac{1}{n}\log \|\Lambda^{k}Df_{w}^{n}(x)\|
\quad \mbox{and} \quad C_{k}(w,F)=\max_{x \in M}C_{k}(w,x)$$
where $\Lambda^{k}$ is the $k$-th exterior product. We suppose that for some $\varepsilon>0$ it holds
\begin{equation*}
\beta(F):=(1-\varepsilon)\int \log \deg(f_w) d \mathbb{P}(w)  - \max_{1\leqslant k\leqslant d -1}\int_{X} C_{k}(w,F) \,d\mathbb{P}(w) >0
\end{equation*}
For potentials $\phi\in\mathbb{L}^1_{\mathbb{P}}(X, C^\alpha(M))$ satisfying (\ref{cond1}) such that for almost $w\in X$ holds $\sup\phi_w-\inf\phi_w<\varepsilon\int \log \deg(f_w)\ d \mathbb{P}(w)$ we obtain the following result.
\begin{corollary}\label{BO} There exists only one equilibrium state associated to $(F|_\theta, \phi)$.
	\end{corollary}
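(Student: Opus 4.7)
The plan is to mirror the uniqueness argument of Proposition~\ref{propestadodeequilibrio}, with the non-uniform expansion hypothesis~(VI) replaced by positivity of the fibre Lyapunov exponents of any equilibrium state, which I will deduce from $\beta(F)>0$ together with the small-variation bound on $\phi$. The role played by Lemma~\ref{gp} in the previous subsection is taken by the generating-partition result of Bilbao--Oliveira~\cite{Bilbao}, valid for any ergodic $F$-invariant measure whose fibre Lyapunov exponents are bounded away from zero.

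Existence follows by adapting the construction of Arbieto--Matheus--Oliveira~\cite{Oliveira2} to H\"older potentials of small variation; Theorem~\ref{formalismo} (which only uses conditions (I)--(IV)) still delivers $\{\nu_w\}$ and $\{h_w\}$, so that $\mu_{F,\phi}$ with $d\mu_w=h_w\,d\nu_w$ is $F$-invariant with fibre jacobian $J_{\mu_w}f_w=\lambda_w e^{-\phi_w}(h_{\theta(w)}\circ f_w)/h_w$. The essential new task is uniqueness. Let $\eta\in\mathcal{M}_{\mathbb{P}}(F)$ be any ergodic equilibrium state of $(F|_\theta,\phi)$. Combining $\lambda_w\geq\deg(f_w)\,e^{\inf\phi_w}$, the standard lower bound $h_{top}(F|\theta)\geq\int\log\deg f_w\,d\mathbb{P}(w)$, the variational principle, and the hypothesis $\sup\phi_w-\inf\phi_w<\varepsilon\int\log\deg f_w\,d\mathbb{P}$, the variational identity for $\eta$ yields
\[
h_\eta(F|\theta)=P_{F|\theta}(\phi)-\int\phi\,d\eta > (1-\varepsilon)\int\log\deg f_w\,d\mathbb{P}(w).
\]

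Next I invoke the random Ruelle inequality (Kifer; Bahnm\"uller--Liu) to bound $h_\eta(F|\theta)$ above by the $\eta$-integral of the sum of positive fibre Lyapunov exponents. Since $C_k(w,F)=\max_{x\in M}C_k(w,x)$ dominates, pointwise in $w$, the sum of the top $k$ exponents of any ergodic measure, the presence of even one non-positive fibre exponent for $\eta$ would give $h_\eta\leq\int C_k(w,F)\,d\mathbb{P}(w)$ for some $k\leq d-1$, contradicting the displayed lower bound in view of $\beta(F)>0$. Hence all fibre Lyapunov exponents of $\eta$ are strictly positive; the Bilbao--Oliveira theorem then furnishes an $\eta$-generating partition, the Random Rokhlin formula applies, and the Jensen-inequality computation of Proposition~\ref{propestadodeequilibrio} transcribes verbatim to force $J_{\eta_w}f_w=J_{\mu_w}f_w$ $\eta$-almost everywhere. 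This identifies $h_w^{-1}\eta_w$ as a reference family for the dual transfer operator; by the uniqueness part of Theorem~\ref{formalismo} we obtain $\eta_w=h_w\nu_w=\mu_w$, and the Radon--Nikodym plus ergodicity argument of the previous subsection concludes $\eta=\mu_{F,\phi}$.

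The main obstacle is the passage from the entropy lower bound to positivity of every fibre Lyapunov exponent of $\eta$. It hinges on the correct formulation of the random Ruelle inequality for $F|_\theta$ and on the elementary but crucial observation $\int C_k(w,x)\,d\eta(w,x)\leq\int C_k(w,F)\,d\mathbb{P}(w)$, which is valid since the marginal of $\eta$ is $\mathbb{P}$. Once this inequality is secured, the remainder is a direct substitution of generating partitions for Lyapunov-positive measures from~\cite{Bilbao} in place of the non-uniformly-expanding version from Lemma~\ref{gp}, and the argument of Proposition~\ref{propestadodeequilibrio} yields the conclusion with no further modification.
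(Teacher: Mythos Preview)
Your proposal is correct and takes essentially the same approach as the paper: both use the random Margulis--Ruelle inequality together with $\beta(F)>0$ and the small-variation hypothesis on $\phi$ to force every fibre Lyapunov exponent of an ergodic equilibrium state $\eta$ to be positive, then invoke the Bilbao--Oliveira generating-partition result from~\cite{Bilbao} to rerun the argument of Proposition~\ref{propestadodeequilibrio}. The only cosmetic difference is that the paper argues by contradiction from $\lambda_1\leq\beta(F)$ (deducing $h_\eta(F|\theta)\leq(1-\varepsilon)\int\log\deg f_w\,d\mathbb{P}$ and hence $h_\eta+\int\phi\,d\eta<P_{F|\theta}(\phi)$, which yields the sharper bound $\lambda_1>\beta(F)$), whereas you first establish the entropy lower bound $h_\eta>(1-\varepsilon)\int\log\deg f_w\,d\mathbb{P}$ and contradict it under $\lambda_1\leq 0$; since $\eta$ is ergodic and its exponents are constants, either conclusion gives exponents bounded away from zero, so both feed equally into~\cite{Bilbao}.
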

\begin{proof}
Let $\eta \in \mathcal{M}_{\mathbb{P}}(F|\theta)$ be an ergodic equilibrium state for $(F|_{\theta}, \phi)$. Denote by $\lambda_{1}(w,x)\leq \cdots\leq \lambda_{d}(w,x)$ the Lyapunov exponents of $\eta$ at $(w,x)$. We claim that they are bigger than $\beta(F)>0$. If not, by applying the random version of Margulis-Ruelle's inequality (\cite{Pei1}, Theorem 2.4) we have
	\begin{align*}
	h_{\eta}(F|\theta)&\leq \int\sum^{d}_{i=1}\lambda_{i}^{+}(w,x)d\eta(w,x)\\
	&=\int\lambda^{+}_{1}(w,x)d\eta(w,x)+\int\sum_{i\in\{2,...,d\}}\lambda^{+}_{i}(w,x)d\eta(w,x)\\
	&\leq \beta(F)+\int C_{d-1}(w,x)d\eta(w,x)\le \beta(F)+\max_{1\le k\le d-1}\int C_{k}(w,F) \, d\mathbb{P}(w)\\
	&\leq (1-\varepsilon)\int \log \deg(f_w) d \mathbb{P}(w).
	\end{align*}
	Thus for potentials such that $\sup\phi_w-\inf\phi_w<\varepsilon\int \log \deg(f_w)\ d \mathbb{P}(w)$ follows that
	\begin{eqnarray*}
	h_{\eta}(F|\theta) + \int \phi \ d\eta &\leq& (1-\varepsilon)\int \log \deg(f_w) d \mathbb{P}(w) + \int\sup\phi_w  d \eta\\
	&<&\int \log \deg(f_w) d \mathbb{P}(w)+ \int \inf \phi_w  d\eta \leq P_{F|\theta}(\phi).
	\end{eqnarray*}
 which is a contradiction. Therefore the Lyapunov exponents of  $\eta$ are bigger than $\beta(F)$ and so, $\eta$ admits generating partitions with small diameter. Applying the proof of Proposition \ref{propestadodeequilibrio}, we have that $\eta$ is absolutely continuous whith respect to $\nu$ and thus, the uniqueness is proved.

\end{proof}

\section{Equilibrium stability}\label{eq.stability}
Consider $(F_k, \phi_k)$ a sequence in $\mathcal{H}$ converging to $(F, \phi)$. For each $k\in \mathbb{N}$, let $\mu_k$ be the non-uniformly expanding equilibrium state of $(F_k, \phi_k)$. We are going to prove that any accumulation point $\mu$ of the sequence $(\mu_k)$ is the non-uniformly expanding equilibrium state of $(F, \phi).$

For each $k\in \mathbb{N}$ consider $\{\mu_{k, w}\}_{w\in X}$ the disintegration of $\mu_k$. From Theorem~\ref{formalismo}, we know that $\mu_{k, w}=h_{k, w}\nu_{k, w}$ where $h_{k, w}$ and $\nu_{k, w}$ satisfy
$$\mathcal{L}_{k,w}^{\ast}\nu_{k,\theta(w)}=\lambda_{k,w}\nu_{k,w}\quad \mathcal{L}_{k,w}h_{k,w}=\lambda_{k,w} h_{k,\theta(w)}\,\,\mbox{with}\,\,\, \lambda_{k,w}=\nu_{k,\theta(w)}(\mathcal{L}_{k,w}(1)).$$
We point out that for any $\psi\in C^\alpha(M)$ we have $\mathcal L_{k, w}(\psi)$ converging to $\mathcal L_{w}(\psi)$ in $C^0$-norm, see a proof of this in \cite{AlvesRamosSiqueira}.

Let $\lambda_w$, $\nu_w$ and $h_w$ as in Theorem~\ref{formalismo} applied to $(F, \phi).$ The main step in the proof of the equilibrium stability is the following.

\begin{proposition} For almost $w\in X$ we have the convergences 
	$$\lambda_{k, w}\to \lambda_w \quad \nu_{k, w}\stackrel{w*}\longrightarrow \nu_w \quad \mbox{and} \quad h_{k, w}\to h_w, \quad \mbox{when $k$ goes to infinity}.$$
\end{proposition}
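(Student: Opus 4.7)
My plan is to combine compactness with uniqueness: for $\mathbb{P}$-a.e.\ fixed $w$, extract subsequences along which $(\lambda_{k,w}, \nu_{k,w}, h_{k,w})$ converge to some triple $(\tilde\lambda_w, \tilde\nu_w, \tilde h_w)$, show that any such limit satisfies the eigenequations and normalizations appearing in Theorem~\ref{formalismo}, and then invoke the uniqueness assertions of that theorem and Lemma~\ref{unicidade familia} to force the limit to coincide with $(\lambda_w, \nu_w, h_w)$. The standard subsequence principle then upgrades this to convergence of the whole sequence.

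First I would collect uniform bounds. Since every $(F_k,\phi_k)\in\mathcal{H}$ satisfies conditions (I)--(V) with the common constants built into the definition of $\mathcal{H}$, the cone $\mathcal{C}^{k}_\delta$ and the contraction constant $\gamma<1$ from (V) can be chosen independent of $k$. Hence all $h_{k,w}$ lie in the same cone $\mathcal{C}^{k}_\delta$, and by the remarks following Proposition~\ref{h} satisfy $1/R\leq h_{k,w}\leq R$ with a common H\"older exponent, making $\{h_{k,w}\}_k$ equicontinuous. Moreover $\lambda_{k,w}=\nu_{k,\theta(w)}(\mathcal{L}_{k,w}\mathbf{1})$ is squeezed between $\deg(f_{k,w})e^{\inf\phi_{k,w}}$ and $\deg(f_{k,w})e^{\sup\phi_{k,w}}$, both uniformly bounded and bounded away from zero by the uniform control on $\deg(F_k)$ and $\|\phi_{k,w}\|_\infty$. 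For fixed $w\in X$, a diagonal extraction using Arzel\`a--Ascoli, weak-$*$ compactness of $\mathcal{M}^1(M)$, and Bolzano--Weierstrass produces a subsequence (still indexed by $k$) along which simultaneously $\lambda_{k,\theta^j(w)}\to\tilde\lambda_{\theta^j(w)}$, $\nu_{k,\theta^j(w)}\stackrel{w*}\longrightarrow\tilde\nu_{\theta^j(w)}$, and $h_{k,\theta^j(w)}\to\tilde h_{\theta^j(w)}$ uniformly, for $j\in\{0,1\}$.

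Next I would pass to the limit in the defining identities. Using the convergence $\mathcal{L}_{k,w}\psi\to\mathcal{L}_w\psi$ in $C^0$ for every $\psi\in C^\alpha(M)$ (recalled by the authors just before the proposition), combined with weak-$*$ convergence of $\nu_{k,\theta(w)}$, one gets
$$\tilde\lambda_w\,\tilde\nu_w(\psi)=\lim_k\lambda_{k,w}\nu_{k,w}(\psi)=\lim_k\nu_{k,\theta(w)}(\mathcal{L}_{k,w}\psi)=\tilde\nu_{\theta(w)}(\mathcal{L}_w\psi),$$
so $\mathcal{L}_w^{*}\tilde\nu_{\theta(w)}=\tilde\lambda_w\tilde\nu_w$ and $\tilde\lambda_w=\tilde\nu_{\theta(w)}(\mathcal{L}_w\mathbf{1})$. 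Splitting
$$\|\mathcal{L}_{k,w}h_{k,w}-\mathcal{L}_w\tilde h_w\|_\infty\leq\|\mathcal{L}_{k,w}\|_\infty\,\|h_{k,w}-\tilde h_w\|_\infty+\|(\mathcal{L}_{k,w}-\mathcal{L}_w)\tilde h_w\|_\infty$$
and noting that the operator norms are uniformly bounded, both terms vanish; passing to the limit in $\mathcal{L}_{k,w}h_{k,w}=\lambda_{k,w}h_{k,\theta(w)}$ yields $\mathcal{L}_w\tilde h_w=\tilde\lambda_w\tilde h_{\theta(w)}$. The normalization $\int\tilde h_w\,d\tilde\nu_w=1$ follows from uniform convergence of the integrand paired with weak-$*$ convergence of the measure, and $\tilde h_w\geq 1/R>0$. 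The uniqueness in Theorem~\ref{formalismo} and Lemma~\ref{unicidade familia} then forces $\tilde\lambda_w=\lambda_w$, $\tilde\nu_w=\nu_w$, $\tilde h_w=h_w$ for $\mathbb{P}$-a.e.\ $w$, and since the limit is independent of the extracted subsequence, the full sequences converge.

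The principal obstacle is not the compactness step nor the pass-to-the-limit in isolation, but ensuring enough uniformity to perform both simultaneously along the perturbed sequence. The decisive input is that the cone parameter $k$, the contraction rate $\gamma$, and the bound $R=1+m k\,(\mbox{diam}M)^\alpha$ can be taken independent of $k$, which is precisely what the requirement that (V) hold uniformly on $\mathcal{H}$ buys us. Without this uniformity, the densities $h_{k,w}$ would live in varying cones, the bounds on $R$ would degrade, and equicontinuity of $\{h_{k,w}\}_k$ would break down, rendering the identification of the limit impossible.
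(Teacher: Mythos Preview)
Your overall strategy---extract subsequential limits, pass to the limit in the eigenequations, invoke uniqueness---matches the paper's. The compactness bounds you state and the limit computations for $\mathcal{L}_w^*\tilde\nu_{\theta(w)}=\tilde\lambda_w\tilde\nu_w$ and $\mathcal{L}_w\tilde h_w=\tilde\lambda_w\tilde h_{\theta(w)}$ are correct and essentially the same as the paper's. The Arzel\`a--Ascoli argument for $(h_{k,w})$ is also what the paper does.

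There is, however, a genuine gap at the identification step. You appeal directly to ``the uniqueness in Theorem~\ref{formalismo} and Lemma~\ref{unicidade familia}'' to conclude $(\tilde\lambda_w,\tilde\nu_w,\tilde h_w)=(\lambda_w,\nu_w,h_w)$. But those uniqueness statements concern a \emph{measurable family} $\{\nu_w\}_{w\in X}$ satisfying $\mathcal{L}_w^*\nu_{\theta(w)}=\lambda_w\nu_w$ for $\mathbb{P}$-a.e.\ $w$; they are not pointwise assertions. Your diagonal extraction is $w$-dependent and produces limits only at the two points $w$ and $\theta(w)$; the single relation $\mathcal{L}_w^*\tilde\nu_{\theta(w)}=\tilde\lambda_w\tilde\nu_w$ with $\tilde\lambda_w=\tilde\nu_{\theta(w)}(\mathcal{L}_w\mathbf{1})$ places no constraint that forces $\tilde\nu_w=\nu_w$ or $\tilde\lambda_w=\lambda_w$, because you have not tied $\tilde\nu_{\theta(w)}$ to anything further along the orbit, nor assembled the limits into a measurable family over $X$.

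The paper handles precisely this point with a separate argument you omit: after obtaining $\mathcal{L}_w^*\bar\nu_{\theta(w)}=\bar\lambda_w\bar\nu_w$, it shows $\bar\lambda_w=\lambda_w$ by a pressure computation. Using $(w,n,\varepsilon)$-separated sets and the relation $(\mathcal{L}_w^n)^*\bar\nu_{\theta^n(w)}=\bar\lambda_w^n\bar\nu_w$, it proves $\int\log\bar\lambda_w\,d\mathbb{P}\le P_{F|\theta}(\phi)$; and since $\bar\nu_w$ inherits the Gibbs property at hyperbolic times (Proposition~\ref{conforme}), Proposition~\ref{pressao} yields the reverse inequality. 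This global pressure constraint is what pins down $\bar\lambda_w$, after which the paper invokes Theorem~\ref{formalismo} uniqueness. Your proposal skips this ingredient entirely; without it (or an equivalent device, such as extracting along the full orbit $\{\theta^j(w)\}_{j\ge 0}$ and then running the argument of Lemma~\ref{unicidade familia}), the appeal to uniqueness is unjustified.
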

\begin{proof}
	Recalling that for each $k\in\mathbb{N}$ we have 
	$$ \deg(f_{k, w})e^{\inf \phi_{k, w}} \leq \lambda_{k, w}\leq \deg(f_{k, w})e^{\sup \phi_{k, w}}$$ 
	then the sequence $(\lambda_{k, w})$ admits some accumulation point  $\bar{\lambda}_w.$
	Moreover, taking subsequences, if necessary, there exist probability measures $\bar{\nu}_w$ and $\bar{\nu}_{\theta(w)}$ such that $\nu_{k, w}\stackrel{w*}\longrightarrow\bar{\nu}_w$ and $\nu_{k, \theta(w)}\stackrel{w*}\longrightarrow\bar{\nu}_{\theta(w)}$. 
	We are going to prove that $\mathcal{L}_{w}^{\ast}\bar{\nu}_{\theta(w)}=\bar{\lambda}_w\bar{\nu}_w.$ 
	
	For any $\psi\in C^\alpha(M)$ we can write 
	$$	\mathcal{L}_{w}^{\ast}\bar{\nu}_{\theta(w)}(\psi)=\bar{\nu}_{\theta(w)}( \mathcal{L}_{w}(\psi))= \bar{\nu}_{\theta(w)}(\lim_{k\to\infty} \mathcal{L}_{k,w}(\psi))=\lim_{k\to\infty}\bar{\nu}_{\theta(w)}\left(\mathcal{L}_{k,w}(\psi)\right).$$
	From the convergence of $\nu_{k, \theta(w)}$ to $\bar{\nu}_{\theta(w)}$ we have
	$$	\lim_{k\to\infty}\bar{\nu}_{\theta(w)}\left(\mathcal{L}_{k,w}(\psi)\right)	=\lim_{k\to\infty}\nu_{k, \theta(w)}\left( \mathcal{L}_{k, w}(\psi)\right) %\quad ( \nu_k\rightarrow \nu)\\ %\pause
	=\lim_{k\to\infty} \mathcal{L}_{k, w}^{\ast}(\nu_{k, \theta(w)})(\psi).
	$$
	Since $\nu_{k, \theta(w)}$ is a reference measure, the last equality can be rewrite as
	$$\lim_{k\to\infty} \mathcal{L}_{k, w}^{\ast}(\nu_{k, \theta(w)})(\psi)= \displaystyle \lim_{k \to \infty} {\lambda_{k, w}} \nu_{k, w}(\psi)=  \bar{\lambda}_w \bar{\nu}_w (\psi). 
	$$
	Thus $\mathcal{L}_{w}^{\ast}\bar{\nu}_{\theta(w)}(\psi)=\bar{\lambda}_w \bar{\nu}_w (\psi)$ for any $\psi\in C^\alpha(M)$. Because $C^\alpha(M)$ is dense in $C^0(M)$ we conclude that $\mathcal{L}_{w}^{\ast}\bar{\nu}_{\theta(w)}=\bar{\lambda}_w\bar{\nu}_w.$ 
	
	Now we are going to verify that $\bar{\lambda}_w=\lambda_w$. Therefore, from the uniqueness given by Theorem~\ref{formalismo} follows that $\bar{\nu}_w=\nu_w.$
	
		Given $\varepsilon>0$ small and $n\in \mathbb{N}$, consider $F_n$ a $(w, n, \varepsilon)$-separated set. Let $\mathcal{U}$ be the open cover of $M$ defined by $\mathcal{U}:=\{\displaystyle\cap_{j=0}^{n-1}f^{-j}_{w}(B(f_{w}^{j}(x), \varepsilon))\, ;\, x\in F_n\}$. Because $(\mathcal{L}^{n}_{w})^{\ast}\bar{\nu}_{\theta^{n}(w)}=\bar{\lambda}^{n}_{w}\bar{\nu}_{w}$ it follows that
	\begin{eqnarray*}
		1=\bar{\nu}_{w}(M)&=&\int(\bar{\lambda}^{n}_{w})^{-1}\mathcal{L}^{n}_{w}(1)\ d\bar{\nu}_{\theta^{n}(w)}\\
		&\leq& (\bar{\lambda}^{n}_{w})^{-1}\sum_{U\subset \mathcal{U}}\int_{U}e^{S_{n}\phi_w(z)}\ d\bar{\nu}_{\theta^{n}(w)}\\
		&\leq&(\bar{\lambda}^{n}_{w})^{-1}\sum_{x\in F_n}e^{S_{n}\phi_w(x)}\int_{U}e^{(S_{n}\phi_w(z)-S_{n}\phi_w(x))}\ d\bar{\nu}_{\theta^{n}(w)}\\
		&\leq& (\bar{\lambda}^{n}_{w})^{-1}\displaystyle\sum_{x\in F_n}e^{S_{n}\phi_w(x)}e^{\sum_{j=0}^{n-1}|\phi_{\theta^{j}(w)}|_{\alpha}\varepsilon}.
	\end{eqnarray*}
	Thus $\sum_{j=0}^{n-1}(\log \bar{\lambda}_{\theta^{j}(w)}-|\phi_{\theta^{j}(w)}|_{\alpha}\varepsilon)\leq \log P_{F|_\theta}(w, n, \varepsilon).$ As $\mathbb{P}$ is ergodic we obtain
	\begin{eqnarray*}	\int\!\log\bar{\lambda}_w \ d\mathbb{P}(w)-\varepsilon\int|\phi_{w}|_{\alpha}d\mathbb{P}(w)&=& \lim_{n\to \infty}\dfrac{1}{n}\sum_{j=0}^{n-1}\log \bar{\lambda}_{\theta^{j}(w)}-\varepsilon\dfrac{1}{n}\sum_{j=0}^{n-1}|\phi_{\theta^{j}(w)}|_{\alpha}\\
		&\leq&\limsup_{n\to \infty}\dfrac{1}{n}\int \log P_{F|_\theta}(w, n, \varepsilon)\ d\mathbb{P}(w) 
	\end{eqnarray*}		
	for every $\varepsilon>0$ small. Hence $\int\!\log\bar{\lambda}_w \ d\mathbb{P}(w)\leq P_{F}(\phi)$. On the other hand, since $\bar{\nu}_w$ is a reference mesure, it satisfies a Gibbs property on hyperbolic times (Proposition~\ref{conforme}). Thus we can apply the proof of Proposition~\ref{pressao} to obtain $P_{F}(\phi)\leq\int\log\bar{\lambda}_w\ d\mathbb{P}(w)$. Recalling that $P_{F}(\phi)=\int\log\lambda_w\ d\mathbb{P}(w)$ we have 	$$\int\log\bar{\lambda}_w\ d\mathbb{P}(w)\leq P_{F}(\phi)=\int\log{\lambda}_w\ d\mathbb{P}(w)\leq\int\log\bar{\lambda}_w\ d\mathbb{P}(w).$$
	Since this is constant $\mathbb{P}$-almost every $w\in X$, we have proved that $\bar{\lambda}_w=\lambda_w$.

%	Let $\alpha$ be a cover of $M$ with diameter less than $\delta$. Given $n\in \mathbb{N}$ denote by $\mathcal{U}$ a subcover of $\alpha^{n}= \bigvee_{i=0}^{n-1}f^{-i}_{w}\alpha$. As \marginpar{\color{blue}definir pressao via cobertura} $(\mathcal{L}^{n}_{w})^{\ast}\bar{\nu}_{\theta^{n}(w)}=\bar{\lambda}^{n}_{w}\bar{\nu}_{w}$ it follows 
%	$$1=\bar{\nu}_{w}(M)=\int(\bar{\lambda}^{n}_{w})^{-1}\mathcal{L}^{n}_{w}(1)\ d\bar{\nu}_{\theta^{n}(w)}\leq (\bar{\lambda}^{n}_{w})^{-1}\sum_{U\subset \mathcal{U}}\int_{U}e^{S_{n}\phi_w(z)}\ d\bar{\nu}_{\theta^{n}(w)}.$$
%	Thus $\displaystyle \sum_{j=0}^{n-1}\log \bar{\lambda}_{\theta^{j}(w)}\leq \log \inf_{\mathcal{U}\subset \alpha^{n}} \sum_{U\subset \mathcal{U}}e^{S_{n}\phi_w(U)}.$ Because $\mathbb{P}$ is ergodic we obtain
%	\begin{eqnarray*}
%		\int\!\log\bar{\lambda}_w \ d\mathbb{P}(w)&=&\lim_{n\to \infty}\dfrac{1}{n} \sum_{j=0}^{n-1}\log \bar{\lambda}_{\theta^{j}(w)}\\
%		&\leq& \lim_{n\to \infty}\frac{1}{n}\!\log \inf_{\mathcal{U}\subset \alpha^{n}}\sum_{U\subset \mathcal{U}}e^{S_{n}\phi_w(U)}d\mathbb{P}(w) \leq P_{F}(\phi).
%	\end{eqnarray*}

	To finish the proposition it remains to prove the convergence $h_{k,w}\to h_{w}$. Since $(F_k, \phi_k) \in \mathcal{H}$ we can assume that the transfer operator $\mathcal{L}_{k, w}$ preserves the same cone $\mathcal{C}^{\hat{k}}_{\delta}$ for $\hat{k}$ large enough . Then, recalling the proof of Proposition~\ref{h} we have each $h_{k, w}\in \mathcal{C}^{\hat{k}}_{\delta}$ with $\int h_{k,w}\ d\nu_{k, w}=1$. Moreover, it satisfies 
	$$|h_{k,w}(x)-h_{k,w}(y)|\leq Cm\ d(x, y)^{\alpha}\quad \mbox{and} \quad |h_{k,w}(y)|\leq \sup h_{k,w}\leq R.$$
	Therefore $(h_{k,w})$ is a equicontinuous and uniformly bounded sequence. From Ascoli-Arzel\`a's Theorem there exists $\bar{h}_w$ some accumulation point. Notice that $\bar{h}_w$ is H\"older continuous and $\int \bar{h}_w\ d\nu_w=1$ because $\nu_{k, w}\stackrel{w*}\longrightarrow \nu_w$. Moreover, $\bar{h}_w$ satisfies
	\begin{eqnarray*}
		\mathcal{L}_{w}(\bar{h}_{w})=\mathcal{L}_{w}(\lim_{k\to\infty}h_{k, w})=\lim_{k\to\infty}\mathcal{L}_{w}(h_{k, w})&=&\lim_{k\to\infty}\mathcal{L}_{k,w}(h_{k, w})\\
		&=&\lim_{k\to\infty}\lambda_{k, w}h_{k,\theta(w)}=\lambda_w \bar{h}_{\theta(w)}
	\end{eqnarray*}
	By the uniqueness of Theorem~\ref{formalismo} we obtain $\bar{h}_w=h_w$ almost every where.
\end{proof}

From the previous result we obtain for almost $w\in X$ that the sequence $(\mu_{k, w})$ converges to $\mu_w$ defined by $\mu_w=h_w\nu_w.$ Therefore, the sequence $(\mu_k)$ converges to the probability measure $\mu$ whose disintegration is $\{\mu_w\}_{w\in X}$. As in Section~\ref{ee}, we have that $\mu$ is the non-uniformly expanding equilibrium state of $(F|_\theta, \phi).$ Moreover, since in the family $\mathcal{H}$ it holds that $P_{F|_\theta}(\phi)=\int\log\lambda_w\ d\mathbb{P}(w)$ we obtain
$$ P_{F|_\theta}(\phi)=\int\log\lambda_w\ d\mathbb{P}(w)=\lim_{k\to\infty}\int\log\lambda_{k,w}\ d\mathbb{P}(w)=\lim_{k\to\infty}P_{F_k}(\phi_k)$$
which proves that the random topological pressure varies continuously in the family. This finishes the proof of Theorem~\ref{estabilidade}.

\section{Applications}

	 In this section we present some classes of systems which satisfy our results. We start by describing a robust class of local diffeomorphisms which contains an open set of non-uniformly expanding maps that are not uniformly expanding. This class was studied in the deterministic case by several authors~\cite{AlvesAraujo, Alves3, Varandas1, VarandasViana}. The first example is an one dimensional version of this class.

\begin{example}\normalfont{Let $g:\mathbb{S}^1\to \mathbb{S}^1$ be a $C^1 $-local diffeomorphisms defined on the unit circle. Fix $\delta>0$ small, $\sigma<1$ and consider a covering $\mathcal Q$ of $\mathbb{S}^1$ by injectivity domains of $f$ and a region $\textsl{A}\subset \mathbb{S}^1$ covered by $q$ elements of $\mathcal Q$ with $q<\deg(g)$ such that
		\begin{enumerate}
			\item[(H1)]   $\|Dg^{-1}(x)\|\leq 1+\delta$, for every $x\in\textsl{A}$;
			\item[(H2)] $\|Dg^{-1}(x)\|\leq \sigma $, for every $x\in M\setminus\textsl{A}$;
		\end{enumerate}
Denote by $\mathcal{F}$ the class of $C^1 $-local diffeomorphisms satisfying conditions (I)-(II). We also assume that every $g\in \mathcal{F}$ is \emph{topologically exact} and its degree $\deg g$ is constant. Notice that $\mathcal{F}$ contains expanding maps, perturbations of expanding maps and intermittent maps. 

Let $\theta: \mathbb{S}^1\to \mathbb{S}^1$ be any invertible function preserving an ergodic measure $\mathbb{P}$ on $\mathbb{S}^1$. Thus any random dynamical system $f=(f_w)_{w}$ generated by maps $f_w\in \mathcal{F}$ satisfies the hypotheses of our theorems. For potentials $\phi \in \mathbb{L}_{\mathbb{P}}^{1}(\mathbb{S}^1, C^{\alpha}(\mathbb{S}^1))$ satisfying (\ref{cond1}) we can apply our results to obtain the thermodynamical formalism in this class and the existence of only one equilibrium state on the set of non-uniformly expanding measures. 

Moreover, if the potential also satisfies the condition $\sup\phi< P_\phi(f)$ then the equilibrium state is unique in the class of ergodic measures. Indeed, using the random versions of Oseledt's theorem and Ruelle's inequality (see \cite{Pei1}), for the equilibrium state $\mu$ the Lyapunov exponent $\lambda(\mu)$ satisfies
\begin{eqnarray*} \lambda(\mu)\geq h_\mu(f)=P_\phi(f)-\int \phi\ d\mu&\geq& h_{top}(f)+\inf\phi-\sup \phi\\
	&\geq& h_{top}(f)-(\sup \phi-\inf\phi)\geq \log q >0.
	\end{eqnarray*}
Therefore, $\lambda(\mu)$ is positive and bounded away from zero. In dimension one this implies that the equilibrium state is non-uniformly expanding.

}
	
\end{example}

The second example is a generalization of the previous one in higher dimension. The existence of equilibrium state for random transformations given by maps in this setting was considered by Arbieto, Matheus and Oliveira \cite{Oliveira2}. 

\begin{example}\normalfont{
Let $M^l$ be a compact $l$-dimensional Riemannian manifold and $\mathcal{D}$ the space of $C^2$ local diffeomorphisms on $M$. Let $(\Omega, T, \mathbb{P})$ be a measure preserving system where $\mathbb{P}$ is ergodic. Define the skew-product by
$$
\begin{array}{cccc}
F\ : & \! \Omega\times M & \! \longrightarrow
& \!\Omega\times M \\
& \! (w,x) & \! \longmapsto	 & \! (T(w),f(w)x)
\end{array}
$$ 
where the maps $f(w)\in \mathcal{D}$ varies continuously on $w\in \Omega$. Fixing positive constants $\delta_0, \delta_1$ small and $p, q \in \mathbb{N}$, satisfying for every $f(w)\in \mathcal{D}$ the following properties:
\begin{enumerate}
    \item [(H1)] There exists a covering $B_1,...,B_p,...,B_{p+q}$ of $M$ by injectivity domains s.t.
    \begin{itemize} 
    \item $\|Df(x)^{-1}\|\leq (1+\delta_1)^{-1}$ for every $x\in B_1 \cup \cdots \cup B_p$.
    \item  $\|Df(x)^{-1}\|\leq (1+\delta_0)$ for every $x\in M$.
    \end{itemize}
 \item [(H2)] $f$ is everywhere volume expanding: $|\det Df(x)| \geq \sigma_1$ with $\sigma_1 > q$.
 \item [(H3)] There exists $A_0$ s.t. $|\log \|f\|_{C^2} | \le A_0$ for any $f\in \mathcal{F} \subset \mathcal{D}$.
\end{enumerate}

 Adding other technical hypotheses, the authors in \cite{Oliveira2} have showed the existence of equilibrium states for potentials with small variation. Moreover, they proved that theses measures are non-uniformly expanding. Now, for potentials satisfying condition~(\ref{cond1}), we can apply our results to obtain the thermodynamical formalism and the uniqueness of equilibrium state for this class.
Let $\mathcal{S}$ be the set of skew-products generated by maps of $\mathcal{D}$ where $T:\Omega\to\Omega$ is fix:
$$F:X\times M \to X\times M \ \ ; \ \ F(w, x)=(T(w),f_{w}(x))$$
Define the family 
$$\mathcal{H}=\left\{(F, \phi)\in \mathcal{S}\times \mathbb{L}^{1}_{\mathbb{P}}(X,C^\alpha(M)) \, ;\, \phi\,\, \mbox{satisfies (\ref{cond1})}\right\}.$$

Notice that $\mathcal{H}$ satisfies the hypothesis of Theorem~\ref{estabilidade}. Thus, the equilibrium state and the random topological pressure vary continuously within this family.}
\end{example}

Next we present an application of our Corollary~\ref{BO}. This example appears in \cite{Bilbao} in the context of maximizing entropy measures. Here we prove uniqueness of equilibrium states for potentials with small variation. 
\begin{example}
	\label{example1}\normalfont{
	Let $f_{0},f_{1}: M\rightarrow M$ be $C^1$ local diffeomorphisms of a compact and connected manifold $M$ satisfying our conditions (I)-(III). For $1\leq k < \dim M=d$ suppose that $\log \lVert \Lambda^{k} Df_1\rVert < \log \deg f_1$ and consider $$C_{k}(w,x)=\limsup_{n\rightarrow +\infty} \frac{1}{n}\log \|\Lambda^{k}Df_{w}^{n}(x)\|\quad \mbox{and} \quad	C_{k}(w)=\max_{x \in M}C_{k}(w,x).$$
	Let $\mathbb{P}_\alpha$ be the Bernoulli measure on the sequence space $X=\{0,1\}^\mathbb{Z}$ such that $\mathbb{P}_\alpha([1])= \alpha$.
	In  \cite{Bilbao} was proved the existence of $\alpha\in (0, 1)$ close to $1$ such that
		\begin{eqnarray*}\int\lim_{n\to \infty} \frac{1}{n}\log \lVert \Lambda^{k} Df^{n}_{w}(x)\rVert\ d\mathbb{P}_{\alpha}(w)&<& \alpha \log \deg(f_1) + (1-\alpha)\log \deg (f_0) \\
		&=& \int \log \deg (f_w)\  d\mathbb{P}_{\alpha}(w).
			\end{eqnarray*}
for every $x\in M.$ Therefore, for some $\varepsilon>0$ we have 
$$(1-\varepsilon)\int \log \deg(f_w)\ d\mathbb{P}_{\alpha}(w)  - \max_{1\leqslant k\leqslant d -1}\int_{X} C_{k}(w) \,d\mathbb{P}_{\alpha}(w) >0$$
which means that the hypothesis of Corollary~\ref{BO} was verified. Thus, for potentials $\phi\in \mathbb{L}^1_{\mathbb{P}}(X, C^{\alpha}(M))$ satisfying~(\ref{cond1}) such that $\sup\phi_w-\inf\phi_w<\varepsilon\int \log \deg(f_w) d \mathbb{P}_{\alpha}$ we conclude uniqueness of equilibrium states.
}
\end{example}

%	 $
%	\tilde{L}=\max\limits_{1\leq k \leq d} \max\{\log \lVert \Lambda^{k} Df_{0}\rVert, \log \lVert \Lambda^{k} Df_{1}\rVert\}.$ Consider $$
%	C_{k}(w,x)=\limsup_{n\rightarrow +\infty} \frac{1}{n}\log \|\Lambda^{k}Df_{w}^{n}(x)\|
%	\quad \mbox{and} \quad	C_{k}(w)=\max_{x \in M}C_{k}(w,x).$$

%defined as below. Consider 
%	$$	A^{n}_{\widetilde{\alpha}}=\left\{ x=(i_j) \in X: \frac{\sharp\{i_{j}=1; 0\leq j\leq n-1\}}{n}\geq \widetilde{\alpha} \right \}.
%	$$ Given   $\widetilde{\alpha} < \alpha$, there exist $\varepsilon_n$ such that $\varepsilon_n\rightarrow 0$ and $ 1-\varepsilon_{n}\leq \mathbb{P}_{\alpha}(A^{n}_{\widetilde{\alpha}})\leq 1+\varepsilon_{n}.$  
%	As main hypotheses of the dynamical system, it is assumed that: 
%	\begin{equation}
%	\label{equation0}
%	\beta(F):=(1-\varepsilon)\int \log \deg(f_w) d \mathbb{P}  - \max_{1\leqslant k\leqslant d -1}\int_{X} C_{k}(w,F) \,d\mathbb{P}(w) >0
%	\end{equation}
%	
%	The authors in \cite{Bilbao} show that \marginpar{\color{blue}Pode ser $(1-\varepsilon)$???}
%		\begin{align*} 
%	\int \lim_{n\to \infty} \frac{1}{n}\log \lVert \Lambda^{k} Df^{n}_{x}\rVert d\mathbb{P}_{\alpha}(x)& \le \widetilde{\alpha} \log \lVert  \Lambda^{k} Df_{1}\rVert + (1-\widetilde{\alpha}) \log \widetilde{L}\\
%	&< \alpha \log \deg f_1 + (1-\alpha)\log \deg f_0\\
%	&= (1-\varepsilon)\int \log \deg f_w d\mathbb{P}_{\alpha}(w),
%	\end{align*}  
%	for $\alpha$ close enough to $1$ and $\widetilde{\alpha}$ close to $\alpha$ with $1\leq k < d$.		
	%	

\vspace{0.5cm}
\section*{Acknowledgements}
We would like to thank P. Varandas for useful suggestions and encouragement. RB thanks to K. Oliveira for many conversations and to IM-UFAL for the hospita-lity. VR also thanks to FAPEMA-Brazil for its financial support.

\vspace{0.5cm}

\end{document}